\documentclass[a4paper,11pt, twoside]{amsart}

\usepackage[inner=2.5cm,outer=2.5cm,top=3cm,headsep=1cm, footskip=1cm,bottom=3cm]{geometry}
\usepackage[english]{babel}
\usepackage{amsfonts,latexsym,rawfonts,amsmath,amssymb,amsthm,amscd}
\usepackage{fancyhdr}
\usepackage{enumerate}
\usepackage{stackrel}

\setcounter{tocdepth}{2}
\usepackage{hyperref}
\usepackage{mdwlist}
\usepackage{array} 
\input xy
\xyoption{all}

\hypersetup{
    colorlinks,%
    citecolor=black,%
    filecolor=black,%
    linkcolor=black,%
    urlcolor=black
}

\makeatletter
\let\uppercasenonmath\@gobble
\makeatother

\usepackage{titlesec}
\titleformat{\section}{\bfseries\center}{\thesection.}{0.4em}{}
\titleformat{\subsection}{\vspace{.08cm}\bfseries}{\thesubsection.}{0.4em}{}


\newtheorem{prop}{Proposition}[section]
\newtheorem{teo}[prop]{Theorem}
\newtheorem{lem}[prop]{Lemma}
\newtheorem{cor}[prop]{Corollary}

\theoremstyle{definition}
\newtheorem{nada}[prop]{}
\newtheorem{defi}[prop]{Definition}
\newtheorem{example}[prop]{Example}
\newtheorem{rmk}[prop]{Remark}
\theoremstyle{theorem}

\def\Ho{\mathrm{Ho}}
\def\Hom{\mathrm{Hom}}

\def\Dec{\mathrm{Dec}}
\def\Cyl{\mathrm{Cyl}}

\def\Img{\mathrm{Im }}
\def\Ext{\mathrm{Ext }}
\def\Inj{\mathrm{Inj }}

\newcommand{\eps}{\varepsilon}
\newcommand{\simr}[1]{\begin{array}{c}\vspace{-.3cm} \simeq\\ \vspace{-.4cm}\text{\tiny{#1}}\vspace{.36cm} \end{array}}
\newcommand{\hto}{\rightsquigarrow}

\newcommand{\lra}{\longrightarrow}

\newcommand{\Cx}[1]{\mathbf{C}^+(#1)}
\newcommand{\FCx}[1]{\mathbf{C}^+(\mathbf{F}#1)}
\newcommand{\FFCx}[1]{\mathbf{C}^+(\mathbf{F}^2#1)}
\newcommand{\FCxinj}[2]{\mathbf{C}^+_{#1}(\mathbf{F}\text{\normalfont Inj}#2)}
\newcommand{\FCxr}[2]{\mathbf{C}^+_{#1}(\mathbf{F}#2)}
\newcommand{\DFCx}{\Gamma\mathbf{F}}
\newcommand{\MHC}{\mathbf{MHC}}
\newcommand{\AHC}{\mathbf{AHC}}
\newcommand{\MHS}{\mathsf{MHS}}

\newcommand{\GCc}{\Gamma\Cc}

\newcommand{\CC}{\mathbb{C}}

\newcommand{\QQ}{\mathbb{Q}}
\newcommand{\RR}{\mathbb{R}}

\newcommand{\ZZ}{\mathbb{Z}}

\newcommand{\Aa}{\mathcal{A}}

\newcommand{\Cc}{\mathcal{C}}
\newcommand{\Dd}{\mathcal{D}}
\newcommand{\Ee}{\mathcal{E}}

\newcommand{\Hh}{\mathcal{H}}

\newcommand{\Jj}{\mathcal{J}}

\newcommand{\Mm}{\mathcal{M}}

\newcommand{\Qq}{\mathcal{Q}}
\newcommand{\Ss}{\mathcal{S}}
\newcommand{\Ww}{\mathcal{W}}

\newcommand{\kk}{\mathbf{k}}



\title[\sc \normalsize Homotopy Theory of Mixed Hodge Complexes]{\Large{Homotopy Theory of Mixed Hodge Complexes}}
\author[\sc \normalsize Joana Cirici] {\large Joana Cirici}
\address[J. Cirici]{
Fachbereich Mathematik und Informatik\\
Freie Universit\"{a}t Berlin\\  Arnimallee 3\\ 
14195 Berlin}
\email{jcirici@math.fu-berlin.de}

\author[Francisco Guill\'{e}n] {Francisco Guill\'{e}n}
\address[F. Guill\'{e}n]{Departament
d'\`{A}lgebra i Geometria\\  Universitat de Barcelona\\ Gran Via 585,
08007 Barcelona}
\email{fguillen@ub.edu}
\thanks{Partially supported by the Spanish Ministry of Economy and Competitiveness under project MTM 2009-09557 and by the Generalitat de
Catalunya as members of the team 2009 SGR 119. The first-named author 
wants to acknowledge financial support from the German Research Foundation through the project SFB 647.}

\subjclass[2010]{32S35, 55U35.}
\keywords{Mixed Hodge theory, homotopical algebra, mixed Hodge complex, filtered derived category,
weight filtration, absolute filtration, diagram category, Cartan-Eilenberg category}

\date{\today}

\begin{document}
\maketitle

\begin{abstract}
We show that the category of mixed Hodge complexes admits a Cartan-Eilenberg structure, a notion introduced in \cite{GNPR}
leading to
a good calculation of the homotopy category in terms of (co)fibrant objects. This result provides a conceptual 
framework from which Beilinson's \cite{Be} and Carlson's \cite{Ca} results on
mixed Hodge complexes and extensions of mixed Hodge structures follow easily.
\end{abstract}
\section{Introduction}
Mixed Hodge complexes were introduced by Deligne \cite{DeHIII} in order to extend his theory of mixed Hodge structures on the cohomology
of algebraic varieties to the singular case, via simplicial resolutions. 
Since their appearance, these objects and their variants (see for example \cite{Sa90}) have 
become a fruitful source of interest. In particular, they have proved crucial in the theory
of Hodge invariants for the homotopy of complex algebraic varieties (see for example \cite{Mo}, \cite{ElZein}, \cite{Ha}, \cite{Na}).
A natural question arising is to ask for a homotopical structure in the category of such objects. 
Unfortunately, none of the contexts provided by the
derived categories of Verdier \cite{Ver} 
and the model categories of Quillen \cite{Q1}, considered nowadays as the standard basis of 
homological and homotopical algebra respectively, satisfy the needs to express the 
properties of diagram categories of complexes with filtrations.

In this paper we study the homotopy theory of mixed Hodge complexes
within the framework of Cartan-Eilenberg categories of \cite{GNPR}.
To achieve this, one must overcome two problems of distinct nature.

The first of these problems is to understand the different homotopical structures carried by filtered and bifiltered complexes.
Filtered derived categories were first studied by Illusie (see Chapter V of \cite{I})
following the classical theory for abelian categories.
An alternative approach in the context of exact categories was developed by Laumon \cite{Lau}.
In certain situations, the filtrations under study are not well defined, and become a proper invariant only in
higher stages of the associated spectral sequences. This is the case of the mixed Hodge theory of Deligne,
in which the weight filtration of a variety depends on the choice of a hyperresolution, and 
is only well defined at the second stage. This circumstance is somewhat hidden by the degeneration 
of the spectral sequences, but it already 
highlights the interest of studying more flexible structures.
In this paper we generalize
the results of Illusie by considering the class of weak equivalences
given by morphisms of filtered complexes inducing an isomorphism 
at a fixed stage of the associated spectral sequence (see also \cite{HT} and \cite{P}). 

The second problem is to obtain a theory of
rectification of morphisms of diagrams up to homotopy, allowing the construction of level-wise fibrant models for diagram categories.
This problem
is of great interest in the
field of abstract homotopical algebra, and has only been solved for some specific situations 
by means of Quillen-type theories (see for example \cite{Hir}, \cite{Br}, \cite{Ci}). 
In this paper we address the problem for diagrams of complexes over additive categories admitting a Cartan-Eilenberg structure and satisfying
certain compatibility conditions.

In a subsequent paper we will develop a more general theory
extending the main results of this paper to multiplicative 
mixed Hodge complexes.
In particular, we will show how Morgan's theory \cite{Mo} on the existence of mixed Hodge structures
in rational homotopy can be understood as a multiplicative version of Belinson's Theorem (see \cite{Be}, Theorem 3.4).
\\

We explain the main results of this paper.
The category  $\mathbf{F}\Aa$ of filtered objects (with finite filtrations) of an abelian category $\Aa$ is additive, but not abelian in general.
Consider the category $\mathbf{C}^{\#}(\mathbf{F}\Aa)$ of complexes over $\mathbf{F}\Aa$,
where ${\#}$ denotes the boundedness condition.
For $r\geq 0$, denote by
$\Ee_r$ the class of \textit{$E_r$-quasi-isomorphisms}: these are
morphisms of filtered complexes inducing a quasi-isomorphism at the $r$-stage of the associated spectral sequence.
The \textit{$r$-derived category} is defined by
$\mathbf{D}^\#_r(\mathbf{F}\Aa):=\mathbf{C}^{\#}(\mathbf{F}\Aa)[\Ee_r^{-1}].$
The case $r=0$ corresponds to the original filtered derived category, studied by Illusie in \cite{I}.
In order to deal with the weight filtration, Deligne \cite{DeHII} introduced the d\'{e}calage of a filtered complex, which shifts 
the associated spectral sequence of the original filtered complex by one stage. This defines a functor 
$\Dec:\mathbf{C}^{\#}(\mathbf{F}\Aa)\to\mathbf{C}^{\#}(\mathbf{F}\Aa)$
which is the identity on morphisms and satisfies $\Dec(\Ee_{r+1})\subset\Ee_r$.
We prove:
\newtheorem*{c21}{\normalfont\bfseries Theorem $\textbf{\ref{cxos_equiv_r}}$}
\begin{c21}
For all $r\geq 0$, Deligne's d\'{e}calage induces an equivalence of categories
$$\Dec:\mathbf{D}^{\#}_{r+1}(\mathbf{F}\Aa)\stackrel{\sim}{\lra} \mathbf{D}^{\#}_{r}(\mathbf{F}\Aa).$$
\end{c21}
The notion of homotopy between morphisms of complexes over an additive category is defined via a translation functor.
In the filtered setting, we find that an $r$-shift on
the filtration of the translation functor leads to different notions of \textit{$r$-homotopy}, suitable to the study of the $r$-derived category.
The associated class $\Ss_r$ of \textit{$r$-homotopy equivalences} satisfies $\Ss_r\subset\Ee_r$.

As in the classical case, we address the study of the $r$-derived category of filtered objects
$\mathbf{F}\Aa$ under the assumption that $\Aa$ has enough injectives.
Denote by
$\FCxinj{r}{\Aa}$
the full subcategory of those bounded below filtered complexes $(K,F)$ over injective objects of $\Aa$ whose differential
satisfies $dF^pK\subset F^{p+r}K$, for all $p\in\ZZ$. 
We prove:
\newtheorem*{c23}{\normalfont\bfseries Theorem $\textbf{\ref{r_filt_ab}}$}
\begin{c23}
Let $\Aa$ be an abelian category with enough injectives. For all $r\geq 0$ the
 triple $(\Cx{\mathbf{F}\Aa},\Ss_r,\Ee_r)$ is a (right) Cartan-Eilenberg category with fibrant models in
$\mathbf{C}_r^+(\mathbf{F}\Inj\Aa)$.
The inclusion induces an equivalence of categories
$$\mathbf{K}_r^+(\mathbf{F}\Inj\Aa)\stackrel{\sim}{\lra}\mathbf{D}_r^+(\mathbf{F}\Aa).$$
between the category of $r$-injective complexes modulo $r$-homotopy, and the localized category of
 filtered complexes with respect to $E_r$-quasi-isomorphisms.
\end{c23}
Denote by $\MHC$ the category of mixed Hodge complexes (see \cite{DeHIII}, Definition 8.1.5).
The spectral sequences associated with the weight and the Hodge filtrations of every mixed Hodge complex degenerate at the stages $E_2$ and $E_1$ respectively.
It proves to be more convenient to work with the category
$\AHC$ of absolute Hodge complexes as introduced by Beilinson \cite{Be}, in which
all spectral sequences
degenerate at the first stage. 
Deligne's d\'{e}calage with respect to the weight filtration induces a functor 
$\Dec_W:\MHC\to\AHC$, and
the cohomology of every absolute Hodge complex is a graded object in the category of mixed Hodge structures.

Our interest is to study the homotopy categories
$$\Ho(\MHC):=\MHC[\Qq^{-1}]\text{ and } \Ho(\AHC):=\AHC[\Qq^{-1}]$$
defined by inverting the class $\Qq$ of level-wise quasi-isomorphisms.
Denote by $\pi\left(\mathbf{G}^+(\MHS)^h\right)$ the category whose objects are non-negatively graded mixed Hodge structures and whose morphisms
are given by homotopy classes of level-wise morphisms compatible up to a filtered homotopy (ho-morphisms for short).
Denote by $\Hh$ the class of morphisms of absolute Hodge complexes that
are homotopy equivalences as ho-morphisms.
We prove:
\newtheorem*{c31}{\normalfont\bfseries Theorem $\textbf{\ref{AHC_sull}}$}
\begin{c31}
The triple $(\AHC,\Hh,\Qq)$ is a (right) Cartan-Eilenberg category, and $\mathbf{G}^+(\MHS)$ is a full subcategory of fibrant minimal models. 
The inclusion induces an equivalence of categories
$$\pi\left(\mathbf{G}^+(\MHS)^h\right)\stackrel{\sim}{\lra}\Ho(\AHC).$$
\end{c31}
Note that although every absolute Hodge complex is quasi-isomorphic to its cohomology
(which has trivial differentials), the full subcategory 
of fibrant minimal models has non-trivial homotopies. This reflects the fact
that mixed Hodge structures have non-trivial extensions.

We endow the category $\MHC$ with a Cartan-Eilenberg structure
(see Theorem $\ref{MHC_sull}$). The following result relates the two points
of view on mixed Hodge complexes which occur in the literature.
\newtheorem*{c32}{\normalfont\bfseries Theorem $\textbf{\ref{AHCequivMHC}}$}
\begin{c32}
Deligne's d\'{e}calage induces an equivalence of categories
$$\Dec_W:\Ho(\MHC)\stackrel{\sim}{\lra}\Ho(\AHC).$$
\end{c32}

We apply Theorem $\ref{AHC_sull}$ to provide 
an alternative proof of Beilinson's Theorem on absolute Hodge complexes.

\newtheorem*{c39}{\normalfont\bfseries Theorem $\textbf{\ref{beilinson}}$}
\begin{c39}[\cite{Be}, Theorem. 3.4]
The inclusion induces an equivalences of categories
$$\mathbf{D}^+\left(\MHS\right)\stackrel{\sim}{\lra}\Ho\left(\AHC\right).$$
\end{c39}

Lastly, as an application of the above results we compute the
extensions of mixed Hodge structures (see Theorem $\ref{extensions}$)
and describe morphisms in $\Ho(\MHC)$ in terms of morphisms and extensions of mixed Hodge structures
(see Corollary $\ref{morfismesAHC}$).

\section{D\'{e}calage and Filtered Derived Categories}
Deligne \cite{DeHII} introduced the shift and d\'{e}calage of filtered complexes and
proved that their associated spectral sequences are related by a shift of indexing.
We collect some main properties of shift and d\'{e}calage which are probably known to experts, but 
which do not seem to have appeared in the literature. We introduce the $r$-derived category of filtered complexes as the localization of
(bounded below) filtered complexes with respect to $E_r$-quasi-isomorphisms and, using Deligne's d\'{e}calage functor,
we generalize results of Illusie for $r=0$, to an arbitrary $r\geq 0$,
within the framework of Cartan-Eilenberg categories.

\subsection{Homotopy in additive categories}\label{Homotopiaditiva}
Let $\Aa$ be an additive category and denote by $\mathbf{C}^{\#}(\Aa)$ the category of cochain complexes
of $\Aa$,
where ${\#}$ denotes the boundedness condition ($+$ and $-$ for bounded below and above respectively,
$b$ for bounded and $\emptyset$ for unbounded).

Recall that the classical translation functor $T:\mathbf{C}^{\#}(\Aa)\to\mathbf{C}^{\#}(\Aa)$  is
defined on objects by $T(X)^n=X^{n+1}$ and $d_{T(X)}^n=-d_{X^{n+1}}$ and on morphisms by
$T(f)^n=f^{n+1}$.
We next define the notion of a translation functor in the category of complexes
which is induced by an additive automorphism of $\Aa$.

\begin{defi}\label{traslacio}
Let $\alpha:\Aa\to \Aa$ be an additive automorphism of $\Aa$ with a natural transformation $\eta:\alpha\to 1$.
The \textit{translation functor induced by $\alpha$}
is the automorphism
$T_\alpha:\mathbf{C}^{\#}(\Aa)\to\mathbf{C}^{\#}(\Aa)$
given by the composition $T_\alpha:=T\circ\alpha=\alpha\circ T$.
\end{defi}
Such translation functor will
prove to be useful
in the context of complexes over filtered abelian categories,
in which a shift by $r$ on the filtration of the classical translation leads
to the different notions of $r$-homotopy, as we shall see in the following section.

For the rest of this section we fix a 
translation functor $T_\alpha$ induced by an automorphism $\alpha$ of $\Aa$ with a natural transformation $\eta:\alpha\to 1$.

\begin{defi}\label{htp_cxos}
Let $f,g:X\to Y$ be morphisms of complexes. An \textit{$\alpha$-homotopy} from $f$ to $g$ is a degree preserving map
$h:T_\alpha(X)\to Y$ such that $dh+hd=(g-f)\circ \eta_X$. We denote $h:f\simr{$\alpha$} g$.
\end{defi}

The additive operation between maps makes
the homotopy relation into an equivalence relation compatible with the composition. 

Denote by $[K,L]_\alpha$ the set of morphisms of complexes from $X$ to $Y$ modulo $\alpha$-homotopy, and by
$\mathbf{K}^{\#}_\alpha(\Aa):=\mathbf{C}^{\#}(\Aa)/\simr{$\alpha$}$
the corresponding quotient category.
\begin{defi}
A morphism $f:X\to Y$ is said to be an 
\textit{$\alpha$-homotopy equivalence} if 
there exists a morphism $g:Y\to X$ together
with $\alpha$-homotopies $fg\simr{$\alpha$} 1_{Y}$ and $gf\simr{$\alpha$} 1_X$. Denote by $\Ss_\alpha$ the class of $\alpha$-homotopy equivalences.
\end{defi}

The following are standard constructions useful in the study of the homotopy theory of
complexes over $\Aa$ (see for example Section III.3.2 of \cite{GMa}).
We will later generalize these constructions in Section 2, for diagrams of complexes.

\begin{defi}\label{double_mapping_cylinder}
Let $f:X\to Y$ and $g:X\to Z$ be two morphisms of complexes. The \textit{$\alpha$-double mapping cylinder of $f$ and $g$} is the complex
$\Cc yl_\alpha(f,g)=T_\alpha(X)\oplus Y\oplus Z$
with differential
$$
D=\left(
\begin{matrix}
\text{-}d&0&0\\
\text{-}\eta_Y\circ\alpha(f)&d&0\\
\eta_Z\circ\alpha(g)&0&d
\end{matrix}
\right).
$$
\end{defi}
Denote by $i:Z\to \Cc yl_\alpha(f,g)$, $j:Y\to \Cc yl_\alpha(f,g)$ and $k:T_\alpha(X)\to \Cc yl_\alpha(f,g)$ the maps defined by
inclusion into the corresponding direct summands.
Then $i$ and $j$ are morphisms of complexes and $k$ is an $\alpha$-homotopy
from $jf$ to $ig$. With these notations:

\begin{lem}\label{caracteritza_double_cyl}
For any complex $W$, the map
$$\Hom(\Cc yl_\alpha(f,g),W)\to\left\{(h,u,v)\,;\, u\in \Hom(Y,W),\, v\in\Hom(Z,W),\, h:u f\simr{$\alpha$} v g\right\}$$
defined by $t\mapsto (t k,t j,t i)$ is a bijection.
\end{lem}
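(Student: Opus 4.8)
The plan is to use that, as a graded object, $\Cc yl_\alpha(f,g)$ is nothing but the biproduct $T_\alpha(X)\oplus Y\oplus Z$, so that a map out of it amounts to an arbitrary triple of its three components, and then to show that the single requirement that such a map commute with the differentials splits into exactly the three conditions cutting out the target set. Thus the whole statement is the universal property of the coproduct refined by a block-matrix computation.

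First I would dispose of the purely graded assertion: forgetting differentials, the inclusions $k$, $j$, $i$ realize $\Cc yl_\alpha(f,g)$ as a direct sum, so $t\mapsto(tk,tj,ti)=(h,u,v)$ is a bijection between degree-preserving maps $t\colon\Cc yl_\alpha(f,g)\to W$ and triples of degree-preserving maps $h\colon T_\alpha(X)\to W$, $u\colon Y\to W$, $v\colon Z\to W$, the inverse sending $(h,u,v)$ to the map with those components. It then remains to match, under this correspondence, the morphisms of complexes $t$ with the triples for which $u$ and $v$ are morphisms of complexes and $h$ is an $\alpha$-homotopy $uf\simr{$\alpha$} vg$. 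To this end I would expand the equation $d_W\circ t=t\circ D$ and evaluate it on each summand of the source via the block form of $D$. On the summands $Y$ and $Z$, whose columns in $D$ consist only of the diagonal differential, this returns at once the identities making $u$ and $v$ morphisms of complexes. On the summand $T_\alpha(X)$ the three entries of the first column assemble into a relation of the form $dh+hd=v\circ\eta_Z\circ\alpha(g)-u\circ\eta_Y\circ\alpha(f)$; here the naturality of $\eta$, giving $\eta_Z\circ\alpha(g)=g\circ\eta_X$ and $\eta_Y\circ\alpha(f)=f\circ\eta_X$, rewrites the right-hand side as $(vg-uf)\circ\eta_X$, which is precisely the defining equation of an $\alpha$-homotopy $h\colon uf\simr{$\alpha$} vg$ of Definition $\ref{htp_cxos}$.

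The only point requiring care, and the sole place where the precise shape of $D$ enters, is the sign bookkeeping in this last relation: one must check that the signs chosen for the off-diagonal entries $-\eta_Y\circ\alpha(f)$ and $+\eta_Z\circ\alpha(g)$, together with the sign of the translated differential carried by the $(1,1)$-block, conspire to yield the homotopy equation $dh+hd=(vg-uf)\circ\eta_X$ with the correct signs and not a variant. Granting this verification, the three extracted conditions are exactly those defining the target set, the same computation run backwards shows that every triple in the target set produces a morphism of complexes, and hence $t\mapsto(tk,tj,ti)$ is the claimed bijection. I expect no genuine difficulty beyond this sign check, the argument being a direct unwinding of the biproduct structure.
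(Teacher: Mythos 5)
Your proposal is correct and follows essentially the same route as the paper: the paper's proof simply exhibits the inverse $(h,u,v)\mapsto t$, $t(x,y,z)=h(x)+u(y)+v(z)$, which is exactly your biproduct decomposition, and your block-by-block expansion of $d_W\circ t=t\circ D$ (using naturality of $\eta$ to rewrite $\eta_Y\circ\alpha(f)=f\circ\eta_X$ and $\eta_Z\circ\alpha(g)=g\circ\eta_X$) is the verification the paper leaves implicit. The sign check you flag does work out: the $(1,1)$-entry $-d$ of $D$ yields $dh+hd=(vg-uf)\circ\eta_X$, which is precisely the defining equation of an $\alpha$-homotopy from $uf$ to $vg$.
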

\begin{proof}
An inverse $(h,u,v)\mapsto t$ is given by $t(x,y,z)=h(x)+u(y)+v(z)$.
\end{proof}

\begin{defi}\label{mapping_cylinder}
Let $f:X\to Y$ be a morphism of complexes. 
\begin{enumerate}[(1)]
 \item The \textit{$\alpha$-mapping cylinder of $f$} is the complex
$\Cc yl(f):=\Cc yl(f,1_X)=T_\alpha(X)\oplus Y\oplus X$.
\end{enumerate}

There is a commutative diagram of morphisms of complexes
$$
\xymatrix{
\ar[dr]_fX\ar[r]^-{i}&\Cc yl_\alpha(f)\ar[d]^-{p}&Y\ar[l]_-{j}\\
&Y\ar@{=}[ur]
}
$$
where as before $i$ and $j$ denote the inclusions and $p(x,y,z)=y+f(z)$.
\begin{enumerate}[(2)]
 \item The \textit{$\alpha$-mapping cone of $f$} is the complex
$C_\alpha(f):=\Cc yl_\alpha(0,f)=T_\alpha(X)\oplus Y$.
\end{enumerate}

For every complex $W$, the map 
$$\Hom(C_\alpha(f),W)\to\left\{(h,v)\,;\, v\in \Hom(Y,W),\, h:v f\simr{$\alpha$} 0\right\}$$
defined by $t\mapsto (t k,tj)$ is a bijection.
\end{defi}

\begin{defi}\label{cylinder}
The \textit{$\alpha$-cylinder} of a complex $X$ is the complex
$\Cyl_\alpha(X):=\Cc yl_\alpha(1_X).$
\end{defi}
It is an easy consequence of Lemma $\ref{caracteritza_double_cyl}$ that an
$\alpha$-homotopy $h:T_\alpha(X)\to Y$ between morphisms $f,g:X\to Y$ is 
equivalent to a morphism of complexes $H:\Cyl_\alpha(X)\to Y$ satisfying $Hj=f$ and $Hi=g$.

An important property of the cylinder is the following.
\begin{prop}\label{htp_equiv_proj}For every complex $X$, the 
map $p:\Cyl_\alpha(X)\to X$ defined by $p(x,y,z)=y+z$ is an $\alpha$-homotopy equivalence, with homotopy inverse $j$.
\end{prop}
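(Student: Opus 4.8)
The plan is to check the two defining conditions of an $\alpha$-homotopy equivalence for the pair $(p,j)$. The first is immediate: writing $j$ for the inclusion of the middle summand, $j(w)=(0,w,0)$, we get $pj(w)=p(0,w,0)=w$, so $pj=1_X$ holds strictly and this composite needs no homotopy. All the content lies in producing an explicit $\alpha$-homotopy $jp\simr{$\alpha$}1_{\Cyl_\alpha(X)}$, and I would build it by reusing the structural map $k$ rather than guessing a formula from scratch.

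Concretely, let $\mathrm{pr}\colon\Cyl_\alpha(X)\to X$ be the projection onto the third summand and set $H:=k\circ T_\alpha(\mathrm{pr})\colon T_\alpha(\Cyl_\alpha(X))\to\Cyl_\alpha(X)$, so that on elements $H(x,y,z)=(z,0,0)$. Since $jp(x,y,z)=(0,y+z,0)$, the difference $1-jp$ sends $(x,y,z)$ to $(x,-z,z)$, and the identity to be verified is $dH+Hd=(1-jp)\circ\eta$. This is then a direct matrix computation with the differential $D$ of Definition~\ref{double_mapping_cylinder} for $f=g=1_X$: applying $D$ to $H(x,y,z)=(z,0,0)$ uses the first column of $D$, producing the diagonal term $-\alpha(d)(z)$ in the first slot and the two off-diagonal terms $-\eta z$ and $\eta z$ in the second and third slots; applying $H$ to $D(x,y,z)$ reads off the third component of $D$ (its third row), which feeds the opposite diagonal term back into the first slot together with the remaining $\eta$-contribution. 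The two diagonal contributions cancel, and what survives assembles to exactly $(1-jp)\circ\eta$.

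The step I expect to be the only real obstacle is the sign-and-$\eta$ bookkeeping forced by the translation functor, since $d_{T_\alpha}=-\alpha(d)$ and the entries of $D$ carry the factors $\eta_X\circ\alpha(f)$; keeping track of where $\eta$ is applied, and invoking its naturality with respect to $d_X$, is precisely what makes the two diagonal terms cancel and the off-diagonal $\eta$-terms land in the right coordinates. As an organizing check, Lemma~\ref{caracteritza_double_cyl} identifies the self-maps $1_{\Cyl_\alpha(X)}$ and $jp$ with the triples $(k,j,i)$ and $(0,j,j)$, which already signals that the two maps differ only in the cylinder direction recorded by $k$ — exactly the direction in which $H$ operates. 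Once $H$ is verified, $pj=1_X$ and $jp\simr{$\alpha$}1_{\Cyl_\alpha(X)}$ together exhibit $p$ as an $\alpha$-homotopy equivalence with homotopy inverse $j$.
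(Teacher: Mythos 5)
Your proposal is correct and follows essentially the same route as the paper: the paper's entire proof consists of exhibiting the homotopy $h:jp\simr{$\alpha$}1$ given by $h(x,y,z)=(z,0,0)$, which is exactly your map $H=k\circ T_\alpha(\mathrm{pr})$, the identity $pj=1_X$ being left implicit as trivial. Your additional packaging of $H$ via $k$ and the matrix verification of $dH+Hd=(1-jp)\circ\eta$ is just an expanded version of the same argument.
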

\begin{proof}
An $\alpha$-homotopy
$h:jp\simr{$\alpha$}1:T_\alpha(\Cyl_\alpha(X))\to \Cyl_\alpha(X)$ is
given by $h(x,y,z)=(z,0,0)$.
\end{proof}

\begin{cor}\label{quocient_abelianes}
The categories
$\mathbf{K}^{\#}_\alpha(\Aa)$ and $\mathbf{C}^{\#}(\Aa)[\Ss_\alpha^{-1}]$
are canonically isomorphic.
\end{cor}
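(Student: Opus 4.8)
The plan is to realize both categories as the same localization by producing a pair of mutually inverse functors, each the identity on objects, from the universal properties of the quotient and of the localization. Write $Q:\mathbf{C}^{\#}(\Aa)\to\mathbf{K}^{\#}_\alpha(\Aa)$ for the canonical quotient functor and $L:\mathbf{C}^{\#}(\Aa)\to\mathbf{C}^{\#}(\Aa)[\Ss_\alpha^{-1}]$ for the localization functor. First I would treat the easy direction: if $f:X\to Y$ lies in $\Ss_\alpha$, with inverse $g$ and $\alpha$-homotopies $fg\simr{$\alpha$}1_Y$ and $gf\simr{$\alpha$}1_X$, then in $\mathbf{K}^{\#}_\alpha(\Aa)$, whose morphisms are $\alpha$-homotopy classes, one has $Q(f)Q(g)=Q(fg)=Q(1_Y)$ and $Q(g)Q(f)=Q(1_X)$, so $Q(f)$ is an isomorphism. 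Hence $Q$ inverts $\Ss_\alpha$, and the universal property of localization yields a unique functor $\bar Q:\mathbf{C}^{\#}(\Aa)[\Ss_\alpha^{-1}]\to\mathbf{K}^{\#}_\alpha(\Aa)$ with $\bar Q\circ L=Q$, necessarily the identity on objects.

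The main obstacle is the reverse direction: showing that $L$ identifies $\alpha$-homotopic maps, and here the cylinder is exactly what is needed. Given $f\simr{$\alpha$}g:X\to Y$, the remark following Definition \ref{cylinder} supplies a morphism of complexes $H:\Cyl_\alpha(X)\to Y$ with $Hj=f$ and $Hi=g$, where $i,j:X\to\Cyl_\alpha(X)$ are the two inclusions. By Proposition \ref{htp_equiv_proj} the projection $p:\Cyl_\alpha(X)\to X$, $p(x,y,z)=y+z$, lies in $\Ss_\alpha$, so $L(p)$ is invertible; and the explicit form of $i$ and $j$ gives $pi=pj=1_X$. Applying $L$ we get $L(p)L(i)=L(p)L(j)=\mathrm{id}$, whence $L(i)=L(j)$. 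Therefore $L(f)=L(H)L(j)=L(H)L(i)=L(g)$, so $L$ sends $\alpha$-homotopic morphisms to equal ones. By the universal property of the quotient, $L$ then factors uniquely as $\bar L\circ Q=L$ for some functor $\bar L:\mathbf{K}^{\#}_\alpha(\Aa)\to\mathbf{C}^{\#}(\Aa)[\Ss_\alpha^{-1}]$.

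Finally I would check that $\bar Q$ and $\bar L$ are mutually inverse using the uniqueness clauses of the two universal properties, which avoids any tracking of generators. Since $(\bar L\bar Q)\circ L=\bar L\circ Q=L=\mathrm{id}\circ L$, uniqueness of the factorization through $L$ forces $\bar L\bar Q=\mathrm{id}$; symmetrically $(\bar Q\bar L)\circ Q=\bar Q\circ L=Q=\mathrm{id}\circ Q$ forces $\bar Q\bar L=\mathrm{id}$. This produces the asserted canonical isomorphism $\mathbf{K}^{\#}_\alpha(\Aa)\cong\mathbf{C}^{\#}(\Aa)[\Ss_\alpha^{-1}]$. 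Beyond this formal bookkeeping, the whole argument rests on the two ingredients isolated above, namely the cylinder identities $pi=pj=1_X$ together with the invertibility of $L(p)$ coming from Proposition \ref{htp_equiv_proj}; I expect these to be the only genuinely load-bearing steps, everything else being a routine application of the relevant universal properties.
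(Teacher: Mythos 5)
Your proof is correct and is in substance the same as the paper's: the paper's one-line proof cites Proposition \ref{htp_equiv_proj} together with Proposition 1.3.3 of \cite{GNPR}, and that cited proposition is precisely the abstract statement your universal-property argument unfolds explicitly. The load-bearing facts you isolate --- the cylinder identities $pi=pj=1_X$, the representation of an $\alpha$-homotopy by a morphism $H:\Cyl_\alpha(X)\to Y$, and the invertibility of $L(p)$ guaranteed by Proposition \ref{htp_equiv_proj} --- are exactly the inputs that citation packages.
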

\begin{proof}
It follows from Proposition $\ref{htp_equiv_proj}$ together with Proposition 1.3.3 of \cite{GNPR}.
\end{proof}

\subsection{Filtered Complexes}
Let $\mathbf{F}\Aa$ denote the additive category of filtered objects of an abelian category $\Aa$.
Throughout this paper we will consider filtered complexes $(K,F)\in\mathbf{C}^{\#}(\mathbf{F}\Aa)$
whose filtration is regular and exhaustive: for each $n\geq 0$ there exists $q\in\ZZ$
such that $F^qK^n=0$ and $K=\bigcup_p F^pK$. 
We will denote by $E_r(K)$ the spectral sequence associated with a filtered complex $(K,F)$, 
omitting the filtration $F$ whenever there is no danger of confusion.
For the rest of this section we fix an integer $r\geq 0$.

\begin{defi}
A morphism of filtered complexes $f:K\to L$ is called \textit{$E_r$-quasi-isomorphism}
if the induced morphism
$E_r(f):E_r(K)\to E_r(L)$ is a quasi-isomorphism of complexes.
\end{defi}

Denote by $\Ee_r$ the class of $E_r$-quasi-isomorphisms. The
\textit{$r$-derived category of filtered complexes} is the localized category
$$\mathbf{D}^{\#}_r(\mathbf{F}\Aa):=\mathbf{C}^{\#}(\mathbf{F}\Aa)[\Ee_{r}^{-1}].$$

For $r=0$ we recover the notions of 
filtered quasi-isomorphism and filtered derived category
studied by Illusie in \cite{I} 
(see also \cite{K1} and \cite{Pas} for an account in the frameworks
 of exact categories and Cartan-Eilenberg categories respectively). There is a chain of functors
$$\mathbf{D}^{\#}_0(\mathbf{F}\Aa)\to
\mathbf{D}^{\#}_1(\mathbf{F}\Aa)\to \cdots\to
\mathbf{D}^{\#}_r(\mathbf{F}\Aa)\to\cdots\to
\mathbf{D}^{\#}(\mathbf{F}\Aa),$$
where the rightmost category denotes the localization with respect to quasi-isomorphisms.
Each of these categories keeps less and less information 
of the original filtered homotopy type.

\begin{nada}We next introduce a notion of homotopy suitable to the study of the $r$-derived category.
Given $(A,F)\in\mathbf{F}\Aa$ define the filtered object $(A,F(r))$ by letting
$F(r)^pA:=F^{p+r}A$.
This defines an automorphism $\alpha_r$ of $\mathbf{F}\Aa$, and the identity
defines a natural transformation $\alpha_r\to 1$.

We will denote by $T_r:\mathbf{C}^{\#}(\mathbf{F}\Aa)\to \mathbf{C}^{\#}(\mathbf{F}\Aa)$
the translation functor induced by the automorphism $\alpha_r$. For every filtered complex $(K,F)$ we have
$F^pT_r(K)^n=F(r)^{p}K^{n+1}=F^{p+r}K^{n+1}$.

Given morphisms $f,g:K\to L$, an \textit{$r$-homotopy} from $f$ to $g$
is given by a degree preserving filtered map $h:T_r(K)\to L$ such that $dh+hd=g-f$.
The condition that $h$ is compatible with the filtrations is equivalent to
$h(F^pK^{n+1})\subset F^{p-r}L^{n}$ for all $n\geq 0$ and all $p\in\ZZ$.
Therefore our notion of $r$-homotopy coincides with the notion of homotopy of level $r$ of \cite{CaEil}, pag. 321.

Denote by $\Ss_r$ the associated class of \textit{$r$-homotopy equivalences}.
By Proposition 3.1 of  \cite{CaEil} we have $\Ss_r\subset\Ee_r$.
Hence the triple $(\FCx{\Aa},\Ss_r,\Ee_r)$ is a category with strong and weak equivalences.
\end{nada}

\subsection{Deligne's D\'{e}calage Functor}

\begin{defi}\label{shift_defi}
The \textit{shift} of a filtered complex $(K,F)$ is the filtered complex $(K,SF)$ defined by
$SF^pK^n=F^{p-n}K^n.$
This defines a functor 
$S:\mathbf{C}^{\#}(\mathbf{F}\Aa)\to \mathbf{C}^{\#}(\mathbf{F}\Aa)$
which is the identity on morphisms.
\end{defi}
The shift functor does not admit an inverse, since the differentials would not necessarily be compatible
with filtrations. However, it has both a right and
a left adjoint: these are the d\'{e}calage and its dual construction.
\begin{defi}[\cite{DeHII}]\label{decalage_defi}
The \textit{d\'{e}calage} of a filtered complex $(K,F)$ is the complex $(K,\Dec F)$ given by
$$\Dec F^pK^n=F^{p+n}K^n\cap d^{-1}(F^{p+n+1}K^{n+1}).$$
The \textit{dual d\'{e}calage} is the filtered complex $(K,\Dec^*F)$ given by
$$\Dec^*F^pK^n=d(F^{p+n-1}K^{n-1})+F^{p+n}K^n.$$
These define functors 
$\Dec,\Dec^*:\mathbf{C}^{\#}(\mathbf{F}\Aa)\lra \mathbf{C}^{\#}(\mathbf{F}\Aa)$
which are the identity on morphisms.
\end{defi}
\begin{example}
Let $G$ denote the trivial filtration $0=G^1K\subset G^0K=K$ of a complex $K$.
Then $SG=\sigma$ is the b\^{e}te filtration, while $\Dec G=\Dec^* G=\tau$ is the canonical filtration.
\end{example}
The next result is a matter of verification.
\begin{lem}\label{jota0}The following identities are satisfied:
\begin{enumerate}[(1)]
\item $\Dec\circ S=1$, and $(S \Dec F)^p=F^p\cap d^{-1}(F^{p+1})$,
\item $\Dec^*\circ S=1$, and $(S \Dec^*F)^p=F^p+d(F^{p-1})$.
\end{enumerate}
In particular, there are natural transformations
$S\circ \Dec\to 1$ and $1\to S\circ \Dec^*$.
\end{lem}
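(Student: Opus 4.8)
The plan is to establish all four identities by direct substitution into the definitions of $S$, $\Dec$ and $\Dec^*$, the only nontrivial ingredient being the compatibility of the differential with the filtration, namely $d(F^pK^n)\subset F^pK^{n+1}$ for all $p,n$. I would fix $p\in\ZZ$ and $n\geq 0$ throughout and compute each filtration level explicitly, noting at the outset that since $S$, $\Dec$ and $\Dec^*$ are all the identity on morphisms, it suffices to check equality of filtrations on objects.

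For $\Dec\circ S=1$, I would substitute $(SF)^{q}K^n=F^{q-n}K^n$ into the definition of $\Dec$ to obtain
$$\Dec(SF)^pK^n=(SF)^{p+n}K^n\cap d^{-1}\big((SF)^{p+n+1}K^{n+1}\big)=F^pK^n\cap d^{-1}(F^pK^{n+1}).$$
Since $d(F^pK^n)\subset F^pK^{n+1}$ gives $F^pK^n\subset d^{-1}(F^pK^{n+1})$, the intersection collapses to $F^pK^n$, yielding $\Dec\circ S=1$. The identity $\Dec^*\circ S=1$ is dual: substituting into $\Dec^*$ produces $d(F^pK^{n-1})+F^pK^n$, and the same compatibility $d(F^pK^{n-1})\subset F^pK^n$ collapses the sum to $F^pK^n$. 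The two remaining formulas require no compatibility at all: because $(SG)^pK^n=G^{p-n}K^n$, I would substitute $q=p-n$ into the definitions of $\Dec$ and $\Dec^*$ and read off $(S\Dec F)^pK^n=F^pK^n\cap d^{-1}(F^{p+1}K^{n+1})$ and $(S\Dec^*F)^pK^n=d(F^{p-1}K^{n-1})+F^pK^n$, which are precisely the claimed expressions.

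Finally, the natural transformations follow by inspecting the direction of the resulting filtration inclusions. The formula $(S\Dec F)^p=F^p\cap d^{-1}(F^{p+1})\subset F^p$ shows that $S\Dec F$ is level-wise contained in $F$, so the identity on $K$ is a morphism of filtered complexes $(K,S\Dec F)\to(K,F)$, manifestly natural in $(K,F)$, which gives $S\circ\Dec\to 1$. Dually, $(S\Dec^*F)^p=F^p+d(F^{p-1})\supset F^p$ shows $F^p\subset(S\Dec^*F)^p$, so the identity is a morphism $(K,F)\to(K,S\Dec^*F)$, yielding $1\to S\circ\Dec^*$. There is no genuine obstacle here; the only care needed is in tracking the index shifts correctly and in recalling that these units and counits are the identity on the underlying complexes, differing from $1$ solely in the filtration.
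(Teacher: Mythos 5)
Your proof is correct: the paper omits the argument entirely (stating only that the lemma ``is a matter of verification''), and your direct substitution into the definitions of $S$, $\Dec$ and $\Dec^*$, using $d(F^pK^n)\subset F^pK^{n+1}$ to collapse the intersection and the sum, is exactly the verification intended. The identification of the natural transformations via the level-wise inclusions $(S\Dec F)^p\subset F^p$ and $F^p\subset(S\Dec^*F)^p$, with the identity as underlying map, is also the right reading of the final claim.
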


As a consequence of the above lemma we obtain:
\begin{prop}\label{adjunt_dec} The functor $S$ is left adjoint to $\Dec$ and right adjoint to $\Dec^*$.
In particular:
$$
\Hom(SK,L)=\Hom(K,\Dec L),\quad
\Hom(\Dec^*K, L)=\Hom(K,SL).
$$
\end{prop}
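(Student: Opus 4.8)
The plan is to establish both adjunctions by directly comparing the filtration-compatibility conditions on the underlying morphisms of complexes, exploiting the fact that $S$, $\Dec$ and $\Dec^*$ all act as the identity on underlying complexes and on morphisms. Thus a morphism on either side of each claimed isomorphism is literally the same underlying chain map $f\colon K\to L$; only the constraint imposed by the filtrations differs, and it suffices to show that the two constraints are logically equivalent and that the resulting identification of hom-sets is natural (which is automatic, the bijection being the identity on underlying maps).

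First I would treat the statement that $S$ is left adjoint to $\Dec$. Unwinding the definitions, a filtered morphism $SK\to L$ is a chain map $f$ with $f(F^{q}K^n)\subset F^{q+n}L^n$ for all $q,n$ (using $SF^pK^n=F^{p-n}K^n$ and reindexing $q=p-n$), whereas a filtered morphism $K\to\Dec L$ is a chain map $f$ with $f(F^pK^n)\subset F^{p+n}L^n$ and $f(F^pK^n)\subset d^{-1}(F^{p+n+1}L^{n+1})$ for all $p,n$. The first of the two $\Dec$-conditions coincides with the $S$-condition, so everything reduces to showing that the $S$-condition forces the second. This is exactly where the chain-map hypothesis enters: for $x\in F^pK^n$ one has $d(f(x))=f(d(x))$, and since the differential of $K$ is filtered we have $d(x)\in F^pK^{n+1}$; applying the $S$-condition in degree $n+1$ yields $f(d(x))\in F^{p+n+1}L^{n+1}$, which is precisely the required containment in $d^{-1}(F^{p+n+1}L^{n+1})$.

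The adjunction expressing that $S$ is right adjoint to $\Dec^*$ is entirely dual. A filtered morphism $\Dec^*K\to L$ is a chain map $f$ satisfying $f(F^{p+n}K^n)\subset F^pL^n$ together with $f(d(F^{p+n-1}K^{n-1}))\subset F^pL^n$, while a filtered morphism $K\to SL$ is a chain map with $f(F^qK^n)\subset F^{q-n}L^n$. Again the first condition coincides with the $S$-condition after reindexing $q=p+n$, and the extra clause $f(d(F^{p+n-1}K^{n-1}))\subset F^pL^n$ follows from it: for $y\in F^{p+n-1}K^{n-1}$ the $S$-condition in degree $n-1$ puts $f(y)\in F^pL^{n-1}$, whence $d(f(y))=f(d(y))\in F^pL^n$ since $d$ is filtered.

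I expect the only genuine subtlety to be the bookkeeping of the two opposing filtration shifts and the observation that the $d^{-1}$-clause in $\Dec$ (respectively the $d(\cdots)$-clause in $\Dec^*$) is automatically implied by the shift condition on a chain map; once this point is isolated the rest is a short reindexing. Alternatively, one can phrase the same content through the unit and counit furnished by Lemma \ref{jota0}, taking $\Dec\circ S=1$ as unit and the inclusion $S\circ\Dec\to 1$ as counit for the first adjunction (and dually, $1\to S\circ\Dec^*$ as unit and $\Dec^*\circ S=1$ as counit for the second), and verifying the triangle identities. But the direct identification of hom-sets above seems the most transparent route.
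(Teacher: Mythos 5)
Your proof is correct, but it follows a different route from the paper's. The paper deduces Proposition \ref{adjunt_dec} in one line from Lemma \ref{jota0}: the identities $\Dec\circ S=1$ and $\Dec^*\circ S=1$, together with the natural transformations $S\circ\Dec\to 1$ and $1\to S\circ\Dec^*$ (given by the identity map, which is filtered because $(S\Dec F)^p=F^p\cap d^{-1}(F^{p+1})\subset F^p$ and $F^p\subset F^p+d(F^{p-1})=(S\Dec^*F)^p$), provide unit and counit for $S\dashv\Dec$ and $\Dec^*\dashv S$, and the triangle identities are immediate because in each adjunction one of the two structure maps is the identity. You instead identify both hom-sets directly inside the set of chain maps from $K$ to $L$, using that all three functors are the identity on morphisms, and you correctly isolate the one nontrivial point: the $d^{-1}$-clause in $\Dec$ (resp.\ the $d(\cdot)$-clause in $\Dec^*$) is automatically satisfied by any chain map verifying the shift condition, because $f$ commutes with $d$ and $d$ preserves the filtrations. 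Your reindexing is accurate in both cases, and naturality is indeed automatic since the bijections are identities on underlying maps. What your element-level argument buys is a self-contained proof that makes the role of the chain-map hypothesis transparent; what the paper's formulation buys is brevity and an explicit description of the counit $S\circ\Dec\to 1$ as a natural transformation, which is reused later (it induces the morphism $\eps_r$ in the proof of Theorem \ref{cxos_equiv_r}). Since you sketch this unit--counit alternative at the end of your proposal, the two arguments agree in substance.
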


We next show that for a particular type of complexes, the d\'{e}calage and its dual construction coincide, and define an inverse functor to the shift.
\begin{nada}
Denote by $\mathbf{C}_r^{\#}(\mathbf{F}\Aa)$ the full subcategory of $\mathbf{C}^{\#}(\mathbf{F}\Aa)$ of those
filtered complexes $(K,F)$ satisfying $d(F^pK)\subset F^{p+r}K.$
In particular, the induced differential at the $s$-stage of their associated spectral sequence is trivial for all $s<r$.
\end{nada}
\begin{lem}\label{crinversos}
The functors $\Dec=\Dec^*:\mathbf{C}^{\#}_{r+1}(\mathbf{F}\Aa)\rightleftarrows \mathbf{C}^{\#}_r(\mathbf{F}\Aa):S$
are inverse to each other.
\end{lem}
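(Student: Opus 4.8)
The plan is to reduce everything to an explicit description of $\Dec$ on the subcategory $\mathbf{C}^{\#}_{r+1}(\mathbf{F}\Aa)$, after which each of the packaged assertions (the equality $\Dec=\Dec^*$, the correct restriction of the two functors, and the two inverse relations) becomes an elementary index computation. First I would fix $(K,F)\in\mathbf{C}^{\#}_{r+1}(\mathbf{F}\Aa)$, so that $d(F^pK)\subset F^{p+r+1}K$, and compute both décalages directly from their definitions. For the décalage, the inclusion $d(F^{p+n}K^n)\subset F^{p+n+r+1}K^{n+1}\subset F^{p+n+1}K^{n+1}$ (valid since $r+1\geq 1$) shows $F^{p+n}K^n\subset d^{-1}(F^{p+n+1}K^{n+1})$, so the intersection in $\Dec F^pK^n$ collapses and $\Dec F^pK^n=F^{p+n}K^n$. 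Dually, $d(F^{p+n-1}K^{n-1})\subset F^{p+n+r}K^n\subset F^{p+n}K^n$ (valid since $r\geq 0$) shows the sum defining $\Dec^*F^pK^n$ collapses to the same object. This simultaneously proves $\Dec=\Dec^*$ on $\mathbf{C}^{\#}_{r+1}(\mathbf{F}\Aa)$ and yields the clean formula $\Dec F^pK^n=F^{p+n}K^n$.

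With this formula in hand, I would verify that the two functors restrict as claimed. For $\Dec$, from $d(F^{p+n}K^n)\subset F^{p+n+r+1}K^{n+1}$ and $(\Dec F)^{p+r}K^{n+1}=F^{p+n+r+1}K^{n+1}$ one reads off $d((\Dec F)^pK^n)\subset(\Dec F)^{p+r}K^{n+1}$, so $\Dec F\in\mathbf{C}^{\#}_r(\mathbf{F}\Aa)$. Conversely, for $(K,G)\in\mathbf{C}^{\#}_r(\mathbf{F}\Aa)$ the identity $(SG)^pK^n=G^{p-n}K^n$ together with $d(G^{p-n}K^n)\subset G^{p-n+r}K^{n+1}$ and $(SG)^{p+r+1}K^{n+1}=G^{p-n+r}K^{n+1}$ gives $d((SG)^pK^n)\subset(SG)^{p+r+1}K^{n+1}$, so $SG\in\mathbf{C}^{\#}_{r+1}(\mathbf{F}\Aa)$.

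For the inverse relations, one direction is already available: Lemma $\ref{jota0}$(1) gives $\Dec\circ S=1$ on all of $\mathbf{C}^{\#}(\mathbf{F}\Aa)$, hence in particular on $\mathbf{C}^{\#}_r(\mathbf{F}\Aa)$. For the reverse composite I would either invoke the formula $(S\Dec F)^p=F^p\cap d^{-1}(F^{p+1})$ from the same lemma, which equals $F^p$ as soon as $d(F^pK)\subset F^{p+r+1}K\subset F^{p+1}K$, or compute directly $(S\Dec F)^pK^n=(\Dec F)^{p-n}K^n=F^pK^n$ using the collapsed formula above. Either route establishes $S\circ\Dec=1$ on $\mathbf{C}^{\#}_{r+1}(\mathbf{F}\Aa)$, completing the proof that the two functors are mutually inverse.

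I do not anticipate a genuine obstacle here: the entire content is the collapse of the intersection (for $\Dec$) and of the sum (for $\Dec^*$) under the hypothesis $d(F^pK)\subset F^{p+r+1}K$. The only point requiring care is the bookkeeping of the degree shifts $p\mapsto p+n$ for $\Dec$ and $p\mapsto p-n$ for $S$, so that the restricted functors are seen to land in $\mathbf{C}^{\#}_r(\mathbf{F}\Aa)$ and $\mathbf{C}^{\#}_{r+1}(\mathbf{F}\Aa)$ respectively and so that the composites reproduce the identity filtration exactly.
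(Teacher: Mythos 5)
Your proof is correct and follows essentially the same route as the paper's: both hinge on the observation that $d(F^pK)\subset F^{p+r+1}K\subset F^{p+1}K$ collapses the intersection defining $\Dec$ and the sum defining $\Dec^*$ to the common formula $\Dec F^pK^n=\Dec^*F^pK^n=F^{p+n}K^n$, from which $S\circ\Dec=1$ follows, while $\Dec\circ S=1$ and the restriction statements are the same elementary index checks the paper dismisses as ``a simple verification.'' Your write-up merely makes explicit the bookkeeping that the paper leaves to the reader.
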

\begin{proof}
If $K\in \mathbf{C}^{\#}_{r+1}(\mathbf{F}\Aa)$ then $d(F^pK)\subset F^{p+1}K$. Hence
$\Dec F^{p}K^n=\Dec^*F^{p}K^n=F^{p+n}K^n$. Therefore $S\circ \Dec (K)=S\circ \Dec^* (K)=K$.
A simple verification shows that $\Dec K\in \mathbf{C}^{\#}_{r}(\mathbf{F}\Aa)$.
Conversely, if $K\in \mathbf{C}^{\#}_{r}(\mathbf{F}\Aa)$ it is straightforward that
$SK\in \mathbf{C}^{\#}_{r+1}(\mathbf{F}\Aa)$ and $\Dec\circ S=1$.
\end{proof}

From the definition of the shift it follows that $E_{r+1}^{p+n,-p}(SK)\cong E_{r}^{p,n-p}(K)$ for all $r\geq 0$.
Therefore we have $S(\Ee_{r+1})\subset\Ee_r$.
Moreover, by Proposition 1.3.4 of \cite{DeHII}
the canonical maps
$$E_{r+1}^{p,n-p}(\Dec K)\lra E_{r+2}^{p+n,-p}(K)\lra E_{r+1}^{p,n-p}(\Dec^* K)$$
are isomorphisms for all $r\geq 0$.
We have the identities
$\Ee_{r+1}=\Dec^{-1}(\Ee_{r})=(\Dec^*)^{-1}(\Ee_{r}).$

\begin{teo}\label{cxos_equiv_r}
For all $r\geq 0$, Deligne's d\'{e}calage induces an equivalence of categories
$$\Dec:\mathbf{D}^{\#}_{r+1}(\mathbf{F}\Aa)\stackrel{\sim}{\lra} \mathbf{D}^{\#}_{r}(\mathbf{F}\Aa).$$
\end{teo}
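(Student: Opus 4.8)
The plan is to show that both Deligne's d\'{e}calage $\Dec$ and the shift $S$ descend to the localized categories, and that there they are mutually quasi-inverse. Since $\Dec$ is the identity on morphisms and $\Dec(\Ee_{r+1})\subset\Ee_r$ (which follows from $\Ee_{r+1}=\Dec^{-1}(\Ee_r)$), the universal property of localization provides a functor $\overline{\Dec}:\mathbf{D}^{\#}_{r+1}(\mathbf{F}\Aa)\to\mathbf{D}^{\#}_{r}(\mathbf{F}\Aa)$ commuting with the localization functors. For the candidate quasi-inverse I first check that $S(\Ee_{r})\subset\Ee_{r+1}$: given $f\in\Ee_r$ we have $\Dec(Sf)=(\Dec\circ S)(f)=f\in\Ee_r$ by Lemma~\ref{jota0}, and since $\Ee_{r+1}=\Dec^{-1}(\Ee_r)$ this forces $Sf\in\Ee_{r+1}$. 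Hence $S$ likewise descends to a functor $\overline{S}:\mathbf{D}^{\#}_{r}(\mathbf{F}\Aa)\to\mathbf{D}^{\#}_{r+1}(\mathbf{F}\Aa)$.

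Next I identify the two composites. On complexes one has the strict identity $\Dec\circ S=1$ (Lemma~\ref{jota0}), so comparing precompositions with the localization functor yields $\overline{\Dec}\circ\overline{S}=\mathrm{id}$ on $\mathbf{D}^{\#}_{r}(\mathbf{F}\Aa)$ with no further work. For the other composite the equality only holds up to homotopy: by Proposition~\ref{adjunt_dec} the pair $S\dashv\Dec$ is an adjunction, whose counit is precisely the natural transformation $\varepsilon:S\circ\Dec\to 1$ of Lemma~\ref{jota0}. The claim is that $\overline{S}\circ\overline{\Dec}$ is naturally isomorphic to the identity via $\varepsilon$; for this it suffices to prove that every component $\varepsilon_K:S\Dec K\to K$ lies in $\Ee_{r+1}$, so that its image in $\mathbf{D}^{\#}_{r+1}(\mathbf{F}\Aa)$ is an isomorphism and $\varepsilon$ induces a natural isomorphism $\overline{S}\circ\overline{\Dec}\cong\mathrm{id}$.

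The heart of the argument, and the point where I would be most careful, is the verification that $\varepsilon_K\in\Ee_{r+1}$. Because $\Dec\circ S=1$, the unit $\eta:1\to\Dec\circ S$ of the adjunction is the identity: the bijection of Proposition~\ref{adjunt_dec} is the identity on underlying maps, so $\eta_K=\mathrm{id}_K$. The triangle identity $\Dec(\varepsilon_K)\circ\eta_{\Dec K}=\mathrm{id}_{\Dec K}$ then reduces to $\Dec(\varepsilon_K)=\mathrm{id}_{\Dec K}$, which certainly lies in $\Ee_r$; invoking once more $\Ee_{r+1}=\Dec^{-1}(\Ee_r)$ gives $\varepsilon_K\in\Ee_{r+1}$, as required. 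Alternatively one can compute directly that $E_{r+1}(\varepsilon_K)$ is an isomorphism from the reindexing isomorphisms $E_{r+1}(SM)\cong E_r(M)$ and $E_{r}(\Dec K)\cong E_{r+1}(K)$ recorded before the theorem, but the adjunction argument avoids the bookkeeping.

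Putting these together, $\overline{\Dec}$ and $\overline{S}$ are mutually quasi-inverse, whence $\overline{\Dec}$ is an equivalence of categories. Everything other than the statement that the counit is levelwise an $E_{r+1}$-quasi-isomorphism is a formal consequence of the adjunction $S\dashv\Dec$ together with the characterization $\Ee_{r+1}=\Dec^{-1}(\Ee_r)$, so that single naturality-plus-weak-equivalence claim about $\varepsilon$ is the only genuinely delicate step.
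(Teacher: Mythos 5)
Your proof is correct. The one step that genuinely needs care --- that the counit $\varepsilon_K:S\Dec K\to K$ lies in $\Ee_{r+1}$ because $\Dec(\varepsilon_K)=\mathrm{id}_{\Dec K}$ and $\Ee_{r+1}=\Dec^{-1}(\Ee_r)$ --- is sound: the unit of $S\dashv\Dec$ is indeed the identity (the adjunction bijection of Proposition~\ref{adjunt_dec} is the identity on underlying maps), and the triangle identity then forces $\Dec(\varepsilon_K)$ to be the identity. The remaining formal steps (both functors descend to the localizations, $\overline{\Dec}\circ\overline{S}=\mathrm{id}$ by uniqueness in the universal property, and a levelwise weak equivalence of functors descends to a natural isomorphism on localized categories) are all legitimate.

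Your route differs from the paper's in its global structure, though the key computation is the same. The paper does not construct the quasi-inverse directly on $\mathbf{D}^{\#}_{r}(\mathbf{F}\Aa)$; instead it introduces the composite $\Jj_r=S^r\circ\Dec^r$ and the full subcategories $\mathbf{C}^{\#}_r(\mathbf{F}\Aa)$ of complexes whose differential shifts the filtration by $r$, proves that $\Jj_r$ and the inclusion induce an equivalence $\mathbf{D}^{\#}_r(\mathbf{F}\Aa)\simeq\mathbf{C}^{\#}_r(\mathbf{F}\Aa)[\Ee_r^{-1}]$, and then observes (Lemma~\ref{crinversos}) that on these subcategories $\Dec$ and $S$ are \emph{strict} inverses, so the equivalence between the two localized subcategories is immediate; the theorem follows by a commutative square of equivalences. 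The weak-equivalence verification in the paper is exactly your argument applied to the counit $\eps_r$ of the $r$-fold composite: $\Dec^r(\eps_r)$ is the identity, hence $\eps_r\in\Ee_r$ by iterating $\Ee_{s+1}=\Dec^{-1}(\Ee_s)$. What your approach buys is economy: no iterated functors, no auxiliary subcategories, and the equivalence falls out of the adjunction formalism in one pass. What the paper's detour buys is the intermediate statement $\mathbf{D}^{\#}_r(\mathbf{F}\Aa)\simeq\mathbf{C}^{\#}_r(\mathbf{F}\Aa)[\Ee_r^{-1}]$ itself, which identifies the $r$-derived category with a localization of the smaller category where fibrant objects (the $r$-injective complexes of $\mathbf{C}^+_r(\mathbf{F}\Inj\Aa)$) naturally live, and which feeds into the Cartan--Eilenberg structure of Theorem~\ref{r_filt_ab}. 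Both proofs rest on the same two pillars: $\Dec\circ S=1$ and the fact that $\Dec$ reflects the counit to an identity.
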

\begin{proof}
Consider the composite functor $\Jj_r:=(S^r\circ \Dec^r):\mathbf{C}^{\#}(\mathbf{F}\Aa)\to \mathbf{C}^{\#}_r(\mathbf{F}\Aa)$ and denote
 $i_r:\mathbf{C}_r^{\#}(\mathbf{F}\Aa)\hookrightarrow \mathbf{C}^{\#}(\mathbf{F}\Aa)$ the inclusion.
We first show that $\Jj_r$ and $i_r$ induce inverse equivalences
$$\Jj_r: \mathbf{D}^{\#}_r(\mathbf{F}\Aa)\rightleftarrows\mathbf{C}^{\#}_r(\mathbf{F}\Aa)[\Ee_r^{-1}]:i_r.$$
Indeed, since $1=\Jj_r\circ i_r$ it suffices to show that the map $\eps_r:i_r\circ \Jj_r\to 1$ induced by the counit of
the adjunction $S\dashv \Dec$, is an $E_r$-quasi-isomorphism.
By (1) of Lemma $\ref{jota0}$ one has $\Dec^r\circ (i_r\circ \Jj_r)=\Dec^r$ and $\Dec^r(\eps_r)$ is the identity morphism.
Since $\Ee_{r+1}=\Dec^{-1}(\Ee_r)$, it follows that the map $\eps_r$ is an $E_r$-quasi-isomorphism. Hence the above equivalence follows.

Since $\Dec(\Ee_{r+1})\subset \Ee_r$ and $S(\Ee_r)\subset\Ee_{r+1}$, the inverse functors of Lemma $\ref{crinversos}$
induce an equivalence between localized categories
$\Dec:\mathbf{C}^{\#}_{r+1}(\mathbf{F}\Aa)[\Ee_{r+1}^{-1}]\stackrel{\sim}{\lra}\mathbf{C}^{\#}_{r}(\mathbf{F}\Aa)[\Ee_r^{-1}]$.
We have a diagram of functors
$$
\xymatrix{
\ar[d]^{\Jj_{r+1}}_\wr\mathbf{D}^{\#}_{r+1}(\mathbf{F}\Aa)\ar[r]^{\Dec}&\mathbf{D}^{\#}_{r}(\mathbf{F}\Aa)\ar[d]^{\Jj_r}_\wr\\
\mathbf{C}^{\#}_{r+1}(\mathbf{F}\Aa)[\Ee_{r+1}^{-1}]\ar[r]^{\Dec}_\sim&\mathbf{C}^{\#}_{r}(\mathbf{F}\Aa)[\Ee_r^{-1}].
}
$$
where the bottom and vertical arrows are equivalences. Since $\Dec\circ S=1$ this diagram commutes.
\end{proof}

The following result is a matter of verification and establishes
the behaviour of $r$-homotopies and $r$-homotopy equivalences by shift and d\'{e}calage.

\begin{lem}\label{commuten_translation}
The diagram of functors
$$
\xymatrix{
\mathbf{C}^{\#}(\mathbf{F}\Aa)\ar[d]^{T_{r+1}}\ar[r]^{\Dec}&
\mathbf{C}^{\#}(\mathbf{F}\Aa)\ar[d]^{T_{r}}\ar[r]^S&
\mathbf{C}^{\#}(\mathbf{F}\Aa)\ar[d]^{T_{r+1}}\ar[r]^{\Dec^*}&
\mathbf{C}^{\#}(\mathbf{F}\Aa)\ar[d]^{T_r}\\
\mathbf{C}^{\#}(\mathbf{F}\Aa)\ar[r]^{\Dec}&
\mathbf{C}^{\#}(\mathbf{F}\Aa)\ar[r]^S&
\mathbf{C}^{\#}(\mathbf{F}\Aa)\ar[r]^{\Dec^*}&
\mathbf{C}^{\#}(\mathbf{F}\Aa)\\
}
$$
commutes. In particular:
\begin{enumerate}[(i)]
\item The functor $\Dec$ sends $(r+1)$-homotopies from $f$ to $g$ to $r$-homotopies from $\Dec f$ to $\Dec g$.
\item The functor $S$ sends $r$-homotopies from $f$ to $g$ to $(r+1)$-homotopies from $Sf$ to $Sg$.
\end{enumerate}
\end{lem}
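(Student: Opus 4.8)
The statement to prove is Lemma \ref{commuten_translation}, which asserts that the large diagram of functors involving $\Dec$, $S$, $\Dec^*$ and the translation functors $T_r, T_{r+1}$ commutes, and draws the two corollaries (i) and (ii) about transport of homotopies. The plan is to reduce everything to a single clean computation: verifying that the filtrations agree on each individual square, since all four functors in play ($\Dec$, $S$, $\Dec^*$, $T_r$) act as the identity on morphisms and on underlying complexes, only modifying the filtration. Thus commutativity of the diagram is entirely a statement about filtrations indexed by $p$ and $n$, and there is no issue about compatibility with differentials beyond what is already built into the definitions.

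First I would treat the middle square, involving $S$ and the pair $T_r, T_{r+1}$. Here the key observation is that the translation functor $T_r$ is built from the classical shift $T$ composed with the filtration-reindexing automorphism $\alpha_r$, so on a filtered complex $(K,F)$ one has $F^p T_r(K)^n = F^{p+r}K^{n+1}$ as recorded just before Definition \ref{shift_defi}. I would then compute both composites $S\circ T_r$ and $T_{r+1}\circ S$ on a filtered complex and check that the resulting filtrations in degree $n$ coincide; the shift definition $SF^pK^n = F^{p-n}K^n$ together with the extra $+1$ in the translation index is exactly what makes the bookkeeping close up, converting an $r$-shift before $S$ into an $(r+1)$-shift after $S$. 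The analogous verification for the two outer squares uses the d\'ecalage formula $\Dec F^pK^n = F^{p+n}K^n\cap d^{-1}(F^{p+n+1}K^{n+1})$ (and its dual), where I expect the intersection term to be carried along formally since both composites act identically on the underlying complex.

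The corollaries (i) and (ii) then follow immediately from commutativity of the relevant squares. An $(r+1)$-homotopy $h$ from $f$ to $g$ is a filtered map $h: T_{r+1}(K)\to L$ with $dh+hd = g-f$; applying $\Dec$ and using that the left square commutes shows $\Dec(h)$ is a filtered map $T_r(\Dec K)\to \Dec L$ satisfying the same homotopy identity, hence an $r$-homotopy from $\Dec f$ to $\Dec g$. Part (ii) is symmetric, using the middle square and the fact that $S$ converts the $r$-filtration condition on $h$ into the $(r+1)$-filtration condition. Since an $r$-homotopy equivalence is a morphism admitting a homotopy inverse, functoriality then transports equivalences as well.

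The only genuine content, and the step I expect to be the main obstacle, is confirming that the d\'ecalage squares commute \emph{as filtered functors}: one must check that the intersection with $d^{-1}(F^{p+n+1}K^{n+1})$ appearing in $\Dec$ interacts correctly with the degree-shift and sign of $T_r$. Concretely, the differential of $T_r(K)$ is $-d$ and its degree-$n$ term sits in degree $n+1$ of $K$, so the preimage condition defining $\Dec$ on $T_r(K)$ must be reconciled with the preimage condition defining $T_r$ applied to $\Dec K$; because the sign does not affect kernels or preimages and the index shifts are consistent, these agree, but this is precisely where care is needed. Everything else is routine reindexing, which is why the lemma is flagged as ``a matter of verification.''
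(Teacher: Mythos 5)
Your verification of the diagram itself is fine: all the functors involved are the identity on underlying complexes and on morphisms, so commutativity is purely a matter of matching filtrations, and your computations (e.g.\ $F^pT_{r+1}(SK)^n=F^{p+r-n}K^{n+1}=F^pS(T_rK)^n$, and $F^pT_r(\Dec K)^n=F^{p+r+n+1}K^{n+1}\cap d^{-1}(F^{p+r+n+2}K^{n+2})=F^p\Dec (T_{r+1}K)^n$, the sign of the differential being irrelevant for preimages and images) are exactly the required check. Part (ii) also follows formally, as you say, because the shifted filtration $SF$ does not involve the differential, so any degree-preserving filtered map remains filtered after $S$.

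The genuine gap is in your derivation of (i), where you ``apply $\Dec$'' to the homotopy $h$ and invoke commutativity of the left square. $\Dec$ is a functor on $\mathbf{C}^{\#}(\mathbf{F}\Aa)$, whose morphisms are chain maps; a homotopy $h:T_{r+1}(K)\to L$ is only a degree-preserving filtered map, so ``$\Dec(h)$'' is not produced by functoriality, and the point at issue is precisely whether the same underlying map $h$ is compatible with the d\'{e}calage filtrations. For a general degree-preserving filtered map this is \emph{false}: membership in $\Dec F^pL^n=F^{p+n}L^n\cap d^{-1}(F^{p+n+1}L^{n+1})$ imposes a condition on $dh(x)$, and nothing controls $dh(x)$ when $h$ does not commute with $d$. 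What saves the statement is the homotopy identity itself: if $x\in \Dec F^{p+r}K^{n+1}$, i.e.\ $x\in F^{p+r+n+1}K^{n+1}$ with $dx\in F^{p+r+n+2}K^{n+2}$, then $h(x)\in F^{p+n}L^n$ is immediate, and up to sign conventions $dh(x)=(g-f)(x)\mp h(dx)$, where $(g-f)(x)\in F^{p+r+n+1}L^{n+1}\subset F^{p+n+1}L^{n+1}$ because $f,g$ are filtered chain maps and $r\geq 0$, while $h(dx)\in F^{p+n+1}L^{n+1}$ because $h$ is $(r+1)$-filtered; hence $h(x)\in\Dec F^pL^n$. So (i) — and the analogous statement for $\Dec^*$, which the paper actually needs in Proposition \ref{r_injectius} — is true, but it genuinely uses $dh+hd=g-f$ together with the filteredness of $f$ and $g$; it does not ``follow immediately from commutativity.'' Note also that this, rather than the object-level reconciliation of $d^{-1}$ with the sign and degree shift of $T_r$ (which is routine), is where the real content of the lemma lies, so your final paragraph locates the difficulty in the wrong place.
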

\begin{cor}\label{dechomotopies}
We have inclusions 
$\Dec(\Ss_{r+1})$, $\Dec^*(\Ss_{r+1})\subset\Ss_r$ and $S(\Ss_{r})\subset\Ss_{r+1}.$
\end{cor}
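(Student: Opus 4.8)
The plan is to deduce all three inclusions directly from Lemma $\ref{commuten_translation}$, exploiting the fact that $\Dec$, $\Dec^*$ and $S$ are the identity on morphisms, so that each of them automatically preserves composition and identities. Once this is granted, an $r$-homotopy equivalence is nothing but a pair of mutually inverse morphisms up to homotopy at the appropriate level, and the only substantive point is how each functor shifts that level; this is exactly what the commuting diagram records.

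First I would treat the case of $\Dec$. Let $f\in\Ss_{r+1}$, so there is a morphism $g$ together with $(r+1)$-homotopies $h:fg\simr{$r+1$}1$ and $h':gf\simr{$r+1$}1$. Applying $\Dec$ and using that it is the identity on morphisms, the composites $fg$ and $gf$ are sent to $\Dec(f)\Dec(g)$ and $\Dec(g)\Dec(f)$, and the identities to the respective identities. By part (i) of Lemma $\ref{commuten_translation}$, the $(r+1)$-homotopies $h$ and $h'$ are carried to $r$-homotopies $\Dec(f)\Dec(g)\simr{$r$}1$ and $\Dec(g)\Dec(f)\simr{$r$}1$. Hence $\Dec(f)$ is an $r$-homotopy equivalence with homotopy inverse $\Dec(g)$, proving $\Dec(\Ss_{r+1})\subset\Ss_r$.

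The case of $\Dec^*$ is identical, replacing part (i) by the analogous statement read off from the rightmost commuting square $T_r\circ\Dec^*=\Dec^*\circ T_{r+1}$ of Lemma $\ref{commuten_translation}$, which shows that $\Dec^*$ too carries $(r+1)$-homotopies to $r$-homotopies, and hence $(r+1)$-homotopy equivalences to $r$-homotopy equivalences. Symmetrically, for $S$ I would start from $f\in\Ss_r$ with homotopy inverse $g$ and $r$-homotopies to the identities, and apply $S$: being the identity on morphisms it preserves the composites, while by part (ii) of Lemma $\ref{commuten_translation}$ it sends the $r$-homotopies to $(r+1)$-homotopies, giving $S(\Ss_r)\subset\Ss_{r+1}$. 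There is no real obstacle here: the content has already been isolated in Lemma $\ref{commuten_translation}$, and the only point meriting a line of care is the level bookkeeping, namely that $\Dec$ and $\Dec^*$ lower the homotopy level by one while $S$ raises it by one, which is visible directly from the vertical labels $T_{r+1}$ and $T_r$ of the diagram.
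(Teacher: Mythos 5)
Your proposal is correct and follows exactly the route the paper intends: the corollary is stated as an immediate consequence of Lemma~$\ref{commuten_translation}$, and your argument just spells this out, using that $\Dec$, $\Dec^*$ and $S$ are the identity on morphisms (so composites and identities are preserved) while the commuting squares $T_r\circ\Dec=\Dec\circ T_{r+1}$, $T_r\circ\Dec^*=\Dec^*\circ T_{r+1}$ and $T_{r+1}\circ S=S\circ T_r$ transport the witnessing homotopies to the appropriate level. The level bookkeeping you note ($\Dec$, $\Dec^*$ lower by one, $S$ raises by one) is precisely the content the paper relies on.
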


\subsection{$\mathbf{r}$-Injective Models}
Generalizing the notion of filtered injective complex of Illusie, 
we introduce $r$-injective complexes and show that these are fibrant objects in the sense of \cite{GNPR}, Definition 2.2.1,
with respect to the classes of $r$-homotopy equivalences and $E_r$-quasi-isomorphisms.
We then prove the existence of $r$-injective models for bounded below filtered complexes
(a similar result is proved by Paranjape in \cite{P}), giving rise to a Cartan-Eilenberg structure.
Our results are based on the original results of Illusie,
using induction over $r\geq 0$ via Deligne's d\'{e}calage functor.

\begin{defi}
A filtered complex $(K,F)$ is called \textit{$r$-injective} if for all $p\in\ZZ$:
\begin{enumerate}[(i)]
\item the differential satisfies $d(F^pK)\subset F^{p+r}K$, that is $K\in \mathbf{C}_r^{\#}(\mathbf{F}\Aa)$.
\item the graded object $Gr^p_FK\in\Cx{\Inj \Aa}$ is a complex of injective objects of $\Aa$.
\end{enumerate}
\end{defi}
For $r=0$ the first condition becomes trivial and
 we recover the original notion of filtered complex of injective type introduced by Illusie.

Denote by $\FCxr{r}{\Inj\Aa}$ the full subcategory of $\FCx{\Aa}$ of $r$-injective complexes.

\begin{lem}\label{inversinjectius} The functors
$\Dec=\Dec^*:\FCxr{r+1}{\Inj\Aa}\rightleftarrows \FCxr{r}{\Inj\Aa}:S$ are inverse to each other.
\end{lem}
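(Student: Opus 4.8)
The plan is to reduce everything to Lemma \ref{crinversos}, which already establishes that $\Dec = \Dec^*$ and $S$ are mutually inverse equivalences between $\mathbf{C}^{\#}_{r+1}(\mathbf{F}\Aa)$ and $\mathbf{C}^{\#}_{r}(\mathbf{F}\Aa)$. Since the subcategories $\FCxr{r}{\Inj\Aa}$ are full and condition (i) in the definition of $r$-injectivity is precisely membership in $\mathbf{C}^{\#}_{r}(\mathbf{F}\Aa)$, the only thing left to check is that both functors also preserve condition (ii), namely that the graded pieces remain complexes of injective objects. So the proof splits into verifying this for $S$ and for $\Dec$ separately, after which the mutual inverseness is inherited verbatim.

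First I would treat the shift. From the definition $SF^pK^n = F^{p-n}K^n$ one computes directly that the graded pieces merely reindex, $Gr^p_{SF}K^n = F^{p-n}K^n/F^{p-n+1}K^n = Gr^{p-n}_F K^n$. Hence every term of each graded complex of $SK$ coincides with one of the terms $Gr^q_F K^m$ of $K$, so if $K$ is $r$-injective then $SK$ satisfies condition (ii). Combined with the fact, already recorded in the proof of Lemma \ref{crinversos}, that $SK \in \mathbf{C}^{\#}_{r+1}(\mathbf{F}\Aa)$, this shows that $S$ carries $r$-injective complexes to $(r+1)$-injective complexes.

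Next I would treat the d\'ecalage on an $(r+1)$-injective complex $K$. Because $K \in \mathbf{C}^{\#}_{r+1}(\mathbf{F}\Aa)$ satisfies $d(F^qK)\subset F^{q+1}K$, the intersection defining $\Dec$ collapses and $\Dec F^pK^n = \Dec^*F^pK^n = F^{p+n}K^n$, exactly as in Lemma \ref{crinversos}; in particular $\Dec$ and $\Dec^*$ agree here, so no separate treatment of $\Dec^*$ is needed. Consequently $Gr^p_{\Dec F}K^n = F^{p+n}K^n/F^{p+n+1}K^n = Gr^{p+n}_F K^n$, which is again injective, so $\Dec K$ satisfies condition (ii); together with $\Dec K \in \mathbf{C}^{\#}_{r}(\mathbf{F}\Aa)$ this gives that $\Dec K$ is $r$-injective.

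Having shown that both functors restrict to the stated full subcategories, the inverse relations $\Dec\circ S = 1$ and $S\circ \Dec = 1$ follow immediately from Lemma \ref{crinversos}, completing the argument. The only point requiring care is the d\'ecalage computation, where one must invoke the hypothesis $K \in \mathbf{C}^{\#}_{r+1}(\mathbf{F}\Aa)$ to simplify $\Dec F^pK^n$ to $F^{p+n}K^n$; but since condition (ii) concerns only the termwise injectivity of the graded objects, no analysis of the induced graded differentials is needed, and I do not anticipate any genuine obstacle.
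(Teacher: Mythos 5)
Your proof is correct and follows essentially the same route as the paper: reduce the mutual inverseness to Lemma \ref{crinversos} and check that condition (ii) is preserved via the graded reindexing identity $Gr^p_{SF}K^n=Gr^{p-n}_FK^n$ (the paper's one-line proof cites exactly these two ingredients). Your explicit computation $Gr^p_{\Dec F}K^n=Gr^{p+n}_FK^n$ for the d\'ecalage direction is just an unpacking of what the paper leaves implicit, since it follows from the shift identity together with $S\circ\Dec=1$ on $\mathbf{C}^{\#}_{r+1}(\mathbf{F}\Aa)$.
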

\begin{proof}
It follows from Lemma $\ref{crinversos}$ together with the identity
 $Gr^p_{SF}K^n=Gr^{p-n}_FK^n$.
\end{proof}

\begin{prop}\label{r_injectius}
Let $I$ be an $r$-injective complex. Every $E_r$-quasi-isomorphism $w:K\to L$ induces
a bijection
$w^*:[L,I]_r\lra [K,I]_r$
between $r$-homotopy classes of morphisms.
\end{prop}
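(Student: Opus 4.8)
The plan is to prove the statement by induction on $r\geq 0$, using Deligne's d\'ecalage to descend from $r+1$ to $r$. For $r=0$ the condition $d(F^pK)\subset F^pK$ is automatic for a filtered complex, so a $0$-injective complex is exactly a bounded below filtered complex whose graded pieces $Gr^p_F I$ are complexes of injectives, an $E_0$-quasi-isomorphism is a filtered quasi-isomorphism, and the assertion is Illusie's classical result on filtered complexes of injective type (see \cite{I}, Chapter V). I would also recall its proof in the present language: one forms the $0$-mapping cone $C_0(w)$, which is filtered acyclic because $w$ induces a quasi-isomorphism on each $Gr^p_F$, and uses the bijection of Definition \ref{mapping_cylinder} together with the injectivity of the graded pieces of $I$ to conclude that every filtered chain map $C_0(w)\to I$ is $0$-homotopic; the resulting cone exact sequence then yields that $w^*$ is a bijection.

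For the inductive step, suppose the result holds for $r$ and let $I$ be $(r+1)$-injective. By Lemma \ref{inversinjectius} the complex $J:=\Dec I=\Dec^* I$ is $r$-injective, and since $I\in\mathbf{C}_{r+1}^{+}(\mathbf{F}\Aa)$ Lemma \ref{crinversos} gives $SJ=S\Dec I=I$. The adjunction $\Dec^*\dashv S$ of Proposition \ref{adjunt_dec} then provides a natural bijection
$$\Hom(K,I)=\Hom(K,SJ)\cong\Hom(\Dec^* K,J),$$
and likewise for $L$. Moreover, by the spectral sequence comparison recorded just before Theorem \ref{cxos_equiv_r} we have $\Ee_{r+1}=(\Dec^*)^{-1}(\Ee_r)$, so $\Dec^* w:\Dec^* K\to\Dec^* L$ is an $E_r$-quasi-isomorphism.

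The key point is to lift this adjunction to homotopy classes, that is, to show it induces a natural bijection $[K,I]_{r+1}\cong[\Dec^* K,J]_r$. Here the unit $1\to S\Dec^*$ of Lemma \ref{jota0}(2) is the identity on underlying objects, so a map and its adjunct share the same underlying graded map and the differential condition is preserved; combined with the identity $\Dec^*\circ T_{r+1}=T_r\circ\Dec^*$ of Lemma \ref{commuten_translation}, an $(r+1)$-homotopy $h:T_{r+1}K\to I$ corresponds exactly to an $r$-homotopy $T_r(\Dec^* K)=\Dec^*T_{r+1}K\to J$, compatibly with the relation $dh+hd=g-f$. Granting this, naturality of the adjunction produces a commutative square with $w^*\colon[L,I]_{r+1}\to[K,I]_{r+1}$ on one side and $(\Dec^* w)^*\colon[\Dec^* L,J]_r\to[\Dec^* K,J]_r$ on the other, the horizontal maps being bijections; since $J$ is $r$-injective and $\Dec^* w$ an $E_r$-quasi-isomorphism, the induction hypothesis shows the right vertical map is a bijection, whence so is $w^*$. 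I expect the main obstacle to be precisely this last verification — that the adjunction isomorphism is compatible with the two notions of homotopy — which hinges on the fact that $\Dec^*$, $S$ and the adjunction (co)unit act as the identity on underlying maps while intertwining $T_{r+1}$ and $T_r$ as in Lemma \ref{commuten_translation}.
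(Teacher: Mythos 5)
Your proof is correct and follows essentially the same route as the paper's: induction on $r$ with Illusie's Lemma V.1.4.3 as the base case, and an inductive step that combines Lemma \ref{inversinjectius}, the adjunction $\Dec^*\dashv S$ of Proposition \ref{adjunt_dec}, and the intertwining $\Dec^*\circ T_{r+1}=T_r\circ\Dec^*$ of Lemma \ref{commuten_translation} to transport both morphisms and homotopies across a commutative square, reducing $w^*$ to $(\Dec^* w)^*$. Your explicit appeal to $\Ee_{r+1}=(\Dec^*)^{-1}(\Ee_r)$ to see that $\Dec^* w\in\Ee_r$ is a point the paper leaves implicit, but otherwise the arguments coincide.
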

\begin{proof}
The case $r=0$ follows from Lemma V.1.4.3 of \cite{I}.
We proceed by induction. Assume that $I$ is an $(r+1)$-injective complex.
By Lemma $\ref{commuten_translation}$ we have a diagram
$$
\xymatrix{
\ar[d]^{w^*}[L,I]_{r+1}\ar[r]^-{\Dec^*}&[\Dec^*L,\Dec^* I]_{r}\ar[d]^{w^*}\\
[K,I]_{r+1}\ar[r]^-{\Dec^*}&[\Dec^*K,\Dec^* I]_{r}
}
$$
where $\Dec^*I$ is $r$-injective by Lemma $\ref{inversinjectius}$.
By induction hypothesis, the vertical arrow on the right is a bijection. Hence
to conclude the proof it suffices to show that the horizontal arrows are bijections.
By Lemma $\ref{inversinjectius}$ we have $S\circ\Dec^*I=I$. Together with the adjunction $\Dec^*\dashv S$
of Proposition $\ref{adjunt_dec}$ we obtain
$$\Hom(K,I)=\Hom(K,S\circ\Dec^*I)=\Hom(\Dec^*K,\Dec^*I).$$
This identity is valid, not only for morphisms of complexes, but for degree preserving filtered maps.
Together with Lemma $\ref{commuten_translation}$ this gives a bijection between the set of $(r+1)$-homotopies 
$h:T_{r+1}(K)\to I$ from $f$ to $g$ and the set of $r$-homotopies $h:T_{r}(\Dec^*K)\to \Dec^*I$ from $\Dec^*f$ to $\Dec^*g$.
Therefore the horizontal arrows are bijections.
\end{proof}

\begin{prop}\label{exist_rinj}
Let $\Aa$ be an abelian category with enough injectives.
For every filtered complex $K$ of $\Cx{\mathbf{F}\Aa}$ there is an $r$-injective complex $I$
and an $E_r$-quasi-isomorphism 
$\rho:K\to I$.
\end{prop}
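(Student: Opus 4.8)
The plan is to argue by induction on $r\geq 0$, using Deligne's d\'ecalage to reduce each step to the preceding one, with the base case supplied by Illusie's classical theory.

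For $r=0$ the statement is precisely the existence of filtered injective resolutions of bounded below filtered complexes: a $0$-injective complex is a filtered complex of injective type in the sense of Illusie, and an $E_0$-quasi-isomorphism is a filtered quasi-isomorphism. I would therefore simply invoke Illusie's result (Chapter V of \cite{I}), which produces for any $K\in\Cx{\mathbf{F}\Aa}$ a filtered quasi-isomorphism $\rho\colon K\to I$ into a $0$-injective complex.

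For the inductive step, assume the result for $r$ and let $K\in\Cx{\mathbf{F}\Aa}$. First I would note that $\Dec^*K$ again lies in $\Cx{\mathbf{F}\Aa}$: the underlying complex is unchanged, so it is bounded below, and the regularity and exhaustiveness of $\Dec^*F$ are a routine verification. The induction hypothesis then yields an $r$-injective complex $J\in\FCxr{r}{\Inj\Aa}$ together with an $E_r$-quasi-isomorphism $\rho'\colon\Dec^*K\to J$. Applying the shift gives $S\rho'\colon S\Dec^*K\to SJ$; since $S(\Ee_r)\subset\Ee_{r+1}$ this is an $E_{r+1}$-quasi-isomorphism, and by Lemma $\ref{inversinjectius}$ the complex $SJ$ is $(r+1)$-injective (and bounded below, $S$ not altering the underlying complex). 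It then remains only to connect $K$ with $S\Dec^*K$. For this I would take the unit $\eta_K\colon K\to S\Dec^*K$ of the adjunction $\Dec^*\dashv S$ of Proposition $\ref{adjunt_dec}$, which is exactly the natural transformation $1\to S\circ\Dec^*$ of Lemma $\ref{jota0}$, and set $\rho:=S\rho'\circ\eta_K\colon K\to SJ$.

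The step I expect to require the most care is verifying that $\eta_K$ is itself an $E_{r+1}$-quasi-isomorphism, since the composite $\rho$ then lands in $\Ee_{r+1}$ by two-out-of-three. Here I would use that $\Dec^*\circ S=1$ by Lemma $\ref{jota0}$, so that the triangle identity for the adjunction forces $\Dec^*(\eta_K)=1_{\Dec^*K}$; combined with the characterization $\Ee_{r+1}=(\Dec^*)^{-1}(\Ee_r)$ recalled before Theorem $\ref{cxos_equiv_r}$, this identity (trivially an $E_r$-quasi-isomorphism) shows $\eta_K\in\Ee_{r+1}$. Alternatively one can check this directly on spectral sequences via Deligne's comparison $E_{r+1}^{p,n-p}(\Dec^*K)\cong E_{r+2}^{p+n,-p}(K)$. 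Once this is established, $\rho$ is an $E_{r+1}$-quasi-isomorphism into the $(r+1)$-injective complex $SJ$, and the induction closes.
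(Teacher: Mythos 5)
Your proof is correct and follows essentially the same route as the paper's: induction on $r$ with the base case supplied by Illusie, applying the inductive hypothesis to $\Dec^*K$, and transporting the resulting $E_r$-quasi-isomorphism along the adjunction $\Dec^*\dashv S$ to land in the $(r+1)$-injective complex $SJ$. The only difference is one of detail: where the paper simply asserts that the adjoint morphism $K\to SI$ is an $E_{r+1}$-quasi-isomorphism, you justify it by factoring it as $S\rho'\circ\eta_K$ and checking that $S(\Ee_r)\subset\Ee_{r+1}$ and that $\Dec^*(\eta_K)=1$ combined with $\Ee_{r+1}=(\Dec^*)^{-1}(\Ee_r)$ puts $\eta_K$ in $\Ee_{r+1}$ --- a correct elaboration of the step the paper leaves implicit.
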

\begin{proof}
The case $r=0$ follows from Lemma V.1.4.4 of \cite{I}. We proceed by induction.  
Assume that there exists an 
$E_{r}$-quasi-isomorphism $\rho:\Dec^*K\to I$,
where $I$ is $r$-injective.
The adjunction $\Dec^*\dashv S$ of Proposition $\ref{adjunt_dec}$ gives an $E_{r+1}$-quasi-isomorphism $\rho:K\to SI$.
By Lemma $\ref{inversinjectius}$, $SI$ is $(r+1)$-injective.
\end{proof}

\begin{teo}\label{r_filt_ab}Let $\Aa$ be an abelian category with enough injectives. For all $r\geq 0$ the
 triple $(\Cx{\mathbf{F}\Aa},\Ss_r,\Ee_r)$ is a (right) Cartan-Eilenberg category with fibrant models in
$\mathbf{C}_r^+(\mathbf{F}\Inj\Aa)$.
The inclusion induces an equivalence of categories
$$\mathbf{K}_r^+(\mathbf{F}\Inj\Aa)\stackrel{\sim}{\lra}\mathbf{D}_r^+(\mathbf{F}\Aa).$$
between the category of $r$-injective complexes modulo $r$-homotopy, and the localized category of
 filtered complexes with respect to $E_r$-quasi-isomorphisms.
\end{teo}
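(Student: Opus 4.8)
The plan is to recognize that the two preceding propositions supply exactly the two ingredients demanded by the definition of a (right) Cartan-Eilenberg category of \cite{GNPR}, Definition 2.2.1, so that the statement reduces to an application of the abstract machinery. Recall that in that framework, for a category with strong and weak equivalences $(\Cc,\Ss,\Ww)$ with $\Ss\subset\Ww$, an object $M$ is \emph{fibrant} when every weak equivalence $w:X\to Y$ induces a bijection $w^*:[Y,M]\to[X,M]$ between $\Ss$-homotopy classes, and $(\Cc,\Ss,\Ww)$ is a right Cartan-Eilenberg category when every object admits a fibrant model, i.e.\ a weak equivalence into a fibrant object. I have already recorded $\Ss_r\subset\Ee_r$, so it remains to produce the fibrant objects and the fibrant models.

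First I would take the $r$-injective complexes, that is the objects of $\mathbf{C}_r^+(\mathbf{F}\Inj\Aa)$, as the candidate class of fibrant objects. Proposition $\ref{r_injectius}$ asserts precisely that if $I$ is $r$-injective then every $E_r$-quasi-isomorphism $w:K\to L$ induces a bijection $w^*:[L,I]_r\lra[K,I]_r$ on $r$-homotopy classes. Since the strong equivalences are the $r$-homotopy equivalences $\Ss_r$, whose associated homotopy relation governs the quotient by $\simr{$r$}$ (see Corollary $\ref{quocient_abelianes}$), this is exactly the fibrancy condition of \cite{GNPR} relative to $(\Ss_r,\Ee_r)$. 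Hence $r$-injective complexes are fibrant. Second, Proposition $\ref{exist_rinj}$ provides, for each $K$ in $\Cx{\mathbf{F}\Aa}$, an $E_r$-quasi-isomorphism $\rho:K\to I$ with $I$ an $r$-injective complex; this is the existence of fibrant models. Together these are the defining axioms, so $(\Cx{\mathbf{F}\Aa},\Ss_r,\Ee_r)$ is a right Cartan-Eilenberg category with fibrant models in $\mathbf{C}_r^+(\mathbf{F}\Inj\Aa)$.

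For the asserted equivalence of categories I would then invoke the fundamental theorem of \cite{GNPR} on right Cartan-Eilenberg categories, which identifies the localization $\Cc[\Ww^{-1}]$ with the full subcategory of fibrant objects sitting inside the homotopy category $\Cc[\Ss^{-1}]$. Here $\Cc[\Ss^{-1}]=\Cx{\mathbf{F}\Aa}[\Ss_r^{-1}]$ is canonically the category of filtered complexes modulo $r$-homotopy by Corollary $\ref{quocient_abelianes}$, so the full subcategory cut out by the fibrant (that is, $r$-injective) objects therein is precisely $\mathbf{K}_r^+(\mathbf{F}\Inj\Aa)$. The abstract equivalence thereby becomes the inclusion-induced functor
$$\mathbf{K}_r^+(\mathbf{F}\Inj\Aa)\stackrel{\sim}{\lra}\mathbf{D}_r^+(\mathbf{F}\Aa),$$
as claimed.

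I expect the only point requiring genuine attention — rather than real difficulty — to be the bookkeeping that matches the abstract notions (fibrant object, $\Ss$-homotopy class, fibrant model, and the localization of the fibrant subcategory) to the concrete data ($r$-injective complexes, $r$-homotopy classes $[-,-]_r$, $E_r$-quasi-isomorphism resolutions, and $\mathbf{K}_r^+(\mathbf{F}\Inj\Aa)$). All the substantive work has already been carried out in Propositions $\ref{r_injectius}$ and $\ref{exist_rinj}$, so the proof is essentially a translation into the language of \cite{GNPR}.
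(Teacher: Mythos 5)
Your proposal is correct and follows exactly the paper's own argument: fibrancy of $r$-injective complexes from Proposition $\ref{r_injectius}$, existence of models from Proposition $\ref{exist_rinj}$, and the equivalence from Theorem 2.3.4 of \cite{GNPR}. Your additional bookkeeping via Corollary $\ref{quocient_abelianes}$, identifying $\Cx{\mathbf{F}\Aa}[\Ss_r^{-1}]$ with the $r$-homotopy category so that the fibrant subcategory becomes $\mathbf{K}_r^+(\mathbf{F}\Inj\Aa)$, is left implicit in the paper but is the same reasoning.
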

\begin{proof}
By Proposition $\ref{r_injectius}$ every $r$-injective complex is a fibrant object
with respect to the classes $\Ss_r\subset\Ee_r$.
By Proposition $\ref{exist_rinj}$ every filtered complex has an $r$-injective model.
Hence the triple $(\Cx{\mathbf{F}\Aa},\Ss_r,\Ee_r)$ is a Cartan-Eilenberg category with fibrant models in
$\mathbf{C}_r^+(\mathbf{F}\Inj\Aa)$.
The equivalence of categories follows from Theorem 2.3.4 of \cite{GNPR}.
\end{proof}

\subsection{Bifiltered Complexes}
We shall consider bifiltered complexes $(K,W,F)\in\FFCx{\Aa}$ with $W$ an increasing filtration and $F$ a decreasing filtration.
For the sake of simplicity, and given our interest in Hodge theory, we shall only
describe the homotopy theory of bifiltered complexes with respect to the class of $E_{r,0}$-quasi-isomorphisms.
Denote by $\Dec_W$ the functor defined by taking the d\'{e}calage with respect to the filtration $W$ and leaving $F$ intact.
The functor $\Dec_W^*$ is defined analogously via the dual d\'{e}calage.
\begin{defi}
Denote by $\Ee_{0,0}$ the class of morphisms $f:K\to L$ of $\FFCx{\Aa}$ inducing isomorphisms
$H(Gr_p^WGr^q_Ff):H(Gr_p^WGr^q_FK)\to H(Gr_p^WGr^q_FL)$ for all $p,q\in\ZZ$.
Inductively over $r\geq 0$, define a class $\Ee_{r,0}$ of weak equivalences by letting
$$\Ee_{r+1,0}:=\Dec_W^{-1}(\Ee_{r,0})=(\Dec_W^*)^{-1}(\Ee_{r,0}).$$ 
Morphisms of $\Ee_{r,0}$ are called \textit{$E_{r,0}$-quasi-isomorphisms} of bifiltered complexes.
\end{defi}
For every integer $r\geq 0$, denote by
$\mathbf{D}^+_{r,0}(\mathbf{F}^2\Aa):=\FFCx{\Aa}[\Ee_{r,0}^{-1}]$
the localized category of bifiltered complexes with respect to the class of $E_{r,0}$-quasi-isomorphisms.
There is an obvious notion of $(r,0)$-translation functor induced by the 
automorphism of $\mathbf{F}^2\Aa$ that sends every bifiltered object $(A,W,F)$ to the bifiltered object $(A,W(r),F)$.
This defines a notion of $(r,0)$-homotopy. The associated class $\Ss_{r,0}$ of $(r,0)$-homotopy
equivalences satisfies $\Ss_{r,0}\subset\Ee_{r,0}$. Hence the triple $(\Cx{\mathbf{F}^2\Aa},\Ss_{r,0},\Ee_{r,0})$
is a category with strong and weak equivalences.

\begin{defi}
A bifiltered complex $(I,W,F)$ is called \textit{$(r,0)$-injective} if for all $p,q\in\ZZ$,
\begin{enumerate}[(i)]
\item the differential satisfies $d(W_pF^qK)\subset W_{p-r}F^qK$, and
\item the complex $Gr_p^WGr^q_FK$ is an object of $\Cx{\Inj \Aa}$.
\end{enumerate}
\end{defi}

\begin{teo}\label{r_bifilt_ab}Let $\Aa$ be an abelian category with enough injectives. For all $r\geq 0$ the
 triple $(\Cx{\mathbf{F}^2\Aa},\Ss_{r,0},\Ee_{r,0})$ is a (right) Cartan-Eilenberg category with fibrant models in 
$\mathbf{C}_{r,0}^+(\mathbf{F}^2\Inj\Aa)$. The inclusion induces an equivalence of categories
$$\mathbf{K}_{r,0}^+(\mathbf{F}^2\Inj\Aa)\stackrel{\sim}{\lra}\mathbf{D}_{r,0}^+(\mathbf{F}^2\Aa)$$
between the category of $(r,0)$-injective complexes modulo $(r,0)$-homotopy, and the localized category of
bifiltered complexes with respect to $E_{r,0}$-quasi-isomorphisms.
\end{teo}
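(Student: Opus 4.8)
The plan is to reproduce, essentially line by line, the strategy used for Theorem \ref{r_filt_ab}, now inducting on $r$ through the décalage $\Dec_W$ taken with respect to the \emph{increasing} filtration $W$ while leaving the Hodge filtration $F$ untouched throughout. By the definition of a (right) Cartan-Eilenberg category together with Theorem 2.3.4 of \cite{GNPR}, it suffices to establish the two bifiltered analogues of Propositions \ref{r_injectius} and \ref{exist_rinj}: first, that every $(r,0)$-injective complex $I$ is fibrant with respect to the pair $\Ss_{r,0}\subset\Ee_{r,0}$, i.e.\ every $E_{r,0}$-quasi-isomorphism $w\colon K\to L$ induces a bijection $w^*\colon[L,I]_{r,0}\lra[K,I]_{r,0}$ on $(r,0)$-homotopy classes; and second, that every object of $\FFCx{\Aa}$ admits an $E_{r,0}$-quasi-isomorphism to an $(r,0)$-injective complex. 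Granting these, the triple $(\Cx{\mathbf{F}^2\Aa},\Ss_{r,0},\Ee_{r,0})$ is a right Cartan-Eilenberg category with fibrant models in $\mathbf{C}_{r,0}^+(\mathbf{F}^2\Inj\Aa)$, and the displayed equivalence of categories is immediate.

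First I would record the formal machinery in the bifiltered setting. Writing $S_W$ for the shift of the $W$-filtration, the proofs of Lemma \ref{crinversos}, Lemma \ref{inversinjectius}, Proposition \ref{adjunt_dec} and Lemma \ref{commuten_translation} transcribe verbatim with $\Dec,\Dec^*,S$ replaced by $\Dec_W,\Dec_W^*,S_W$, since these operators only involve the $W$-filtration and the differential and commute with everything happening in the $F$-direction. This yields: on $(r+1,0)$-injective complexes $\Dec_W=\Dec_W^*$, and $\Dec_W,S_W$ are inverse equivalences between $(r+1,0)$-injective and $(r,0)$-injective complexes; the adjunctions $S_W\dashv\Dec_W$ and $\Dec_W^*\dashv S_W$; and the compatibility of $\Dec_W^*$ and $S_W$ with $(r,0)$-homotopies. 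Together with the defining identity $\Ee_{r+1,0}=\Dec_W^{-1}(\Ee_{r,0})=(\Dec_W^*)^{-1}(\Ee_{r,0})$, this is precisely the input needed to run the inductive steps.

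With this in hand, the two propositions follow by induction on $r$. For fibrancy, the inductive step reproduces the proof of Proposition \ref{r_injectius}: using $\Dec_W^*I$, which is $(r,0)$-injective by the bifiltered analogue of Lemma \ref{inversinjectius}, together with the identity $S_W\circ\Dec_W^*I=I$ and the adjunction $\Dec_W^*\dashv S_W$, one obtains a bijection between $(r+1,0)$-homotopies into $I$ and $(r,0)$-homotopies into $\Dec_W^*I$, thereby reducing the claim for $I$ to the claim for $\Dec_W^*I$. For existence, the inductive step reproduces the proof of Proposition \ref{exist_rinj}: an $E_{r,0}$-quasi-isomorphism $\Dec_W^*K\to I$ with $I$ $(r,0)$-injective transports, via $\Dec_W^*\dashv S_W$, to an $E_{r+1,0}$-quasi-isomorphism $K\to S_WI$ with $S_WI$ $(r+1,0)$-injective.

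The main obstacle is the base case $r=0$, where the induction has no earlier stage to invoke. In the filtered setting the corresponding base cases were handed off to Illusie's Lemmas V.1.4.3 and V.1.4.4 of \cite{I}, but for two filtrations no such reference is available, so one must supply a genuinely bifiltered argument: every bifiltered complex admits an $E_{0,0}$-quasi-isomorphism to a complex whose double associated graded $Gr_p^WGr^q_F$ lies in $\Cx{\Inj\Aa}$, and such complexes are fibrant for $\Ee_{0,0}$. I would construct these bifiltered injective resolutions directly, resolving each double-graded piece $Gr_p^WGr^q_FK^n$ by injectives of $\Aa$ and assembling them into a bifiltered complex by induction on the degree and on the lengths of $W$ and $F$, which are finite in each degree by regularity and exhaustiveness; the successive extensions split because the graded pieces are injective. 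One cannot shortcut this by applying Theorem \ref{r_filt_ab} twice, since $\mathbf{F}\Aa$ is additive but not abelian, so handling the two filtrations simultaneously at the level of the double graded is the crux. Once the base case is secured, the induction closes and Theorem 2.3.4 of \cite{GNPR} delivers the equivalence $\mathbf{K}_{r,0}^+(\mathbf{F}^2\Inj\Aa)\stackrel{\sim}{\lra}\mathbf{D}_{r,0}^+(\mathbf{F}^2\Aa)$.
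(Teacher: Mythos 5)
Your proposal is correct and follows essentially the same route as the paper, whose entire proof of this theorem is the sentence ``The proof is analogous to that of Theorem \ref{r_filt_ab}'': one inducts on $r$ via $\Dec_W$, $\Dec_W^*$ and $S_W$ acting on the $W$-filtration alone, reducing to the bifiltered analogues of Propositions \ref{r_injectius} and \ref{exist_rinj} and then invoking Theorem 2.3.4 of \cite{GNPR}. Your observation that the base case $r=0$ needs a genuinely bifiltered version of Illusie's Lemmas V.1.4.3--V.1.4.4 (and cannot be obtained by applying Theorem \ref{r_filt_ab} twice, since $\mathbf{F}\Aa$ is not abelian) is precisely the content the paper leaves implicit in the word ``analogous,'' and your resolution of it by resolving the double graded pieces is the standard and correct way to supply it.
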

\begin{proof}
The proof is analogous to that of Theorem $\ref{r_filt_ab}$.
\end{proof}

\section{Homotopy Theory of Diagrams of Complexes}
In this section we study the homotopy theory of diagram categories
whose vertexes are categories of complexes over additive categories, within the framework of Cartan-Eilenberg categories.
The homotopical structure of the vertex categories is transferred to the diagram
category with level-wise weak equivalences and level-wise fibrant models.
The results will be applied to mixed Hodge complexes in the following section.

\subsection{Diagrams of complexes}
\begin{defi}\label{catdiagrames}
Let
$\Cc:I\to\mathsf{Cat}$ be a functor from a small category $I$, to the category of categories $\mathsf{Cat}$.
Denote $\Cc_i:=\Cc(i)\in \mathsf{Cat}$ for all $i\in I$ and  
 $u_*:=\Cc(u)\in \mathsf{Cat}(\Cc_i,\Cc_j)$ for all $u:i\to j$.
The \textit{category of diagrams $\GCc$ associated with the functor $\Cc$} is defined as follows:
\begin{enumerate}[$\bullet$]
\item An object $X$ of $\GCc$ is given by a family of objects $\{X_i\in \Cc_i\}$, for all $i\in I$,
together with a family of morphisms $\varphi_u:u_*(X_i)\to X_j$, called \textit{comparison morphisms}, for every map $u:i\to j$.
Such an object is denoted as
$X=\left(X_i\stackrel{\varphi_u}{\dashrightarrow}X_j\right).$
\item A morphism $f:X\to Y$ of $\GCc$ is given by a family of morphisms $\{f_i:X_i\to Y_i\}$ of $\Cc_i$
for all $i\in I$, such that for every map $u:i\to j$ of $I$, the diagram
$$\xymatrix{
u_*(X_i)\ar[d]_{u_*(f_i)}\ar[r]^{\varphi_u}&X_j\ar[d]^{f_j}\\
u_*(Y_i)\ar[r]^{\varphi_u}&Y_j&
}$$
commutes in $\Cc_j$. Such a morphism is denoted $f=(f_i):X\to Y$.
\end{enumerate}
We will often write $X_i$ for $u_*(X_i)$ and $f_i$ for $u_*(f_i)$,
whenever there is no danger of confusion.
\end{defi}

\begin{rmk}The category of diagrams $\GCc$ associated with $\Cc$ is the category of sections of the projection functor $\pi:\int_I\Cc\to I$,
where $\int_I\Cc$ is the \textit{Grothendieck construction} of $\Cc$ (see \cite{Th}). 
If $\Cc:I\to\mathsf{Cat}$ is the constant functor $i\mapsto\Cc$ then
$\GCc=\Cc^I$ is the diagram category of objects of $\Cc$ under $I$.
\end{rmk}

\begin{nada}\label{indexcat}
We will restrict our study to diagram categories indexed by a finite directed category $I$
whose degree function takes values in $\{0,1\}$. That is:
\begin{enumerate}
 \item [(D$_0$)] There exists a \textit{degree function} $|\cdot|:\text{Ob}(I)\lra \{0,1\}$ such that
$|i|<|j|$ for every non-identity morphism
$u:i\to j$ of $I$.
\end{enumerate}
A finite category $I$ satisfying (D$_0$) is a particular case of a Reedy category for which $I^+=I$.
The main examples of such categories are given by finite zig-zags
$$
\xymatrix{
&\bullet&&\bullet&&\bullet\\
\bullet\ar[ur]&&\bullet\ar[ul]\ar[ur]&&\bullet\ar[ul]\ar[ur]
}\cdots\xymatrix{
\bullet&&\bullet\\
&\bullet\ar[ul]\ar[ur]&&\bullet\ar[ul]
}
$$
We will also assume that the functor $\Cc:I\to\mathsf{Cat}$ satisfies the following axioms:
\begin{enumerate}
\item[(D$_1$)] For all $i\in I$, $\Cc_i=\Cx{\Aa_i}$ is the category of bounded below complexes, where $\Aa_i$
is an additive category with an additive automorphism $\alpha_i:\Aa_i\to \Aa_i$ and a natural transformation $\eta_i:\alpha_i\to 1$.
In particular there is a class $\Ss_i$ of homotopy equivalences associated with the 
translation functor $T_{\alpha_i}$ induced by $\alpha_i$ (see Definition $\ref{traslacio}$).
\item[(D$_2$)] There is a class of weak equivalences $\Ww_i$ of $\Cc_i$ making the triple $(\Cc_i,\Ss_i,\Ww_i)$ 
into a category with strong and weak equivalences.
\item[(D$_3$)] For all $u:i\to j$, the functor $u_*$ is induced by an additive functor $A_i\to A_j$ compatible with $\alpha_i$. It
preserves fibrant objects, strong and weak equivalences.
\end{enumerate}
\end{nada}

With the above hypothesis, the category $\GCc$ is a category of complexes of diagrams of additive categories. 
The level-wise translation functors define a translation functor on $\GCc$.
This gives a notion of homotopy between morphisms of $\GCc$.
Denote by $\Ss$ the associated class of homotopy equivalences.
Note that if $f=(f_i)\in \Ss$, then 
$f_i\in \Ss_i$ for all $i\in I$, but the converse is not true in general.

Denote by $\Ww$ the class of level-wise weak equivalences of $\GCc$.
Since $\Ss_i\subset \Ww_i$ for all $i\in I$, it follows that $\Ss\subset \Ww$.
Therefore the triple $(\GCc,\Ss,\Ww)$ is a category with strong and weak equivalences.

\subsection{Morphisms up to homotopy}
We next introduce a new category $\GCc^h$ whose objects are those of $\GCc$ but whose morphisms
are defined by level-wise morphisms compatible up to fixed homotopies.

For the sake of simplicity, from now on we will omit the notations $T_i$, $\alpha_i$ and $\eta_i:\alpha_i\to 1$ regarding the translation functors
of $\Cc_i$, and use the standard notation $K[n]$ instead of $T_i^n(K)$, for $K\in \Cc_i$.
\begin{defi}\label{premorfisme}Let $X$ and $Y$ be two objects of $\GCc$ and let $n\in\ZZ$.
A \textit{pre-morphism of degree $n$ from $X$ to $Y$} is given by a pair of families
$f=(f_i,F_u)$, where 
\begin{enumerate}[(i)]
\item $f_i:X_i[-n]\to Y_i$ is a degree preserving map in $\Cc_i$, for all $i\in I$.
\item $F_u:X_i[-n+1]\to Y_j$ is a degree preserving map in $\Cc_j$, for $u:i\to j\in I$. 
\end{enumerate}
\end{defi}
\begin{nada}\label{cochaincomplex}
Given a pre-morphism $f=(f_i,F_u)$ of degree $n$ from $X$ to $Y$ we define its differential by
$$Df=\left(df_i-(-1)^nf_id, F_ud+(-1)^ndF_u+(-1)^n(f_j\varphi_u-\varphi_u f_i)\right).$$
\end{nada}
\begin{defi}A \textit{ho-morphism} $f:X\hto Y$ is a pre-morphism $f$ of degree $0$ from $X$ to $Y$ such that
 $Df=0$. Therefore it is given by a pair of families $f=(f_i,F_u)$ such that:
\begin{enumerate}[(i)]
\item $f_i:X_i\to Y_i$ is a morphism of complexes in $\Cc_i$ for all $i\in I$.
\item $F_u:X_i[1]\to Y_j$ satisfies $dF_u+F_ud=\varphi_uf_i-f_j\varphi_u$, for all $u:i\to j$, that is, $F_u$ is a homotopy from $f_j\varphi_u$ to $\varphi_uf_i$.

\end{enumerate}
\end{defi}
Given ho-morphisms $f:X\hto Y$ and $g:Y\hto Z$
let $gf:X\hto Z$ be the ho-morphism given by 
$gf=(g_if_i,G_uf_i+g_jF_u)$. This defines an associative
composition law between ho-morphisms. The identity ho-morphism of $X$ is $1=(1_{X_i},0)$.
A ho-morphism $f:X\hto Y$ is invertible if and only if
$f_i$ is invertible for all $i\in I$. Then the inverse of $f$ is given by
$f^{-1}=(f_i^{-1},-f_j^{-1}F_uf_i^{-1})$.

Denote by $\GCc^h$ the category whose objects are those of $\GCc$ and whose morphisms are ho-morphisms.
Every morphism $f=(f_i)$ can be made into a ho-morphism by letting $F_u=0$.
Hence $\GCc$ is a subcategory of $\GCc^h$.

\begin{defi}A ho-morphism $f=(f_i,F_u)$ is said to be a \textit{weak equivalence}
if $f_i$  is a weak equivalence in $\Cc_i$ for all $i\in I$.
\end{defi}
\begin{defi}\label{hohomotopies_cx} Let $f,g:X\hto Y$ be ho-morphisms.
A \textit{homotopy from $f$ to $g$} is a pre-morphism $h$ of degree $-1$ from $X$ to $Y$ such that
$Dh=g-f$. Hence $h=(h_i,H_u)$ is such that:
\begin{enumerate}[(i)]
 \item  $h_i:X_i[1]\to Y_i$ satisfies $dh_i+h_id=g_i-f_i$, that is, $h_i$ is a homotopy from $f_i$ to $g_i$.
\item  $H_u:X_i[2]\to Y_j$ satisfies $H_ud-dH_u=G_u-F_u+h_j\varphi_u-\varphi_uh_i$, for all $u:i\to j$.
\end{enumerate}
We denote such a homotopy by $h:f\simeq g$.
\end{defi}

 \begin{lem}
The homotopy relation between ho-morphisms is an equivalence relation, compatible with the composition.
\end{lem}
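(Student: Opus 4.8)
The plan is to recognize the homotopy relation as the statement that a difference is a coboundary in a Hom-complex, after which both assertions become formal. For fixed $X,Y\in\GCc$ the pre-morphisms of degree $n$ form an abelian group $\Hom^n(X,Y)$, and the operator $D$ of \ref{cochaincomplex} is additive and raises degree by one. First I would verify that $D^2=0$, so that $(\Hom^\bullet(X,Y),D)$ is a genuine cochain complex; by construction a ho-morphism is exactly a $0$-cocycle, and comparing \ref{hohomotopies_cx} with the formula for $D$ shows that a homotopy from $f$ to $g$ is precisely a degree $-1$ pre-morphism $h$ with $Dh=g-f$. Thus $f\simeq g$ if and only if $g-f$ lies in the image of $D\colon\Hom^{-1}(X,Y)\to\Hom^0(X,Y)$.

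Granting this, the equivalence-relation claim is immediate from additivity of $D$, with no sign bookkeeping: reflexivity holds via the zero homotopy, since $D0=0=f-f$; symmetry holds because $Dh=g-f$ gives $D(-h)=f-g$; and transitivity holds because $Dh=g-f$ and $Dh'=k-g$ give $D(h+h')=k-f$.

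For compatibility with composition I would first extend the composition of ho-morphisms to a bilinear, associative pairing $\Hom^\bullet(Y,Z)\otimes\Hom^\bullet(X,Y)\to\Hom^\bullet(X,Z)$ on pre-morphisms of arbitrary degree, additive in each variable and with $\deg(g\circ f)=\deg g+\deg f$, whose restriction to degree $0$ is the rule $gf=(g_if_i,\,G_uf_i+g_jF_u)$. The key point to verify is the graded Leibniz identity
$$D(g\circ f)=(Dg)\circ f+(-1)^{\deg g}\,g\circ(Df),$$
making $D$ a derivation with respect to $\circ$. Once this holds, suppose $h\colon f\simeq f'$ and $k\colon g\simeq g'$ with $f,f',g,g'$ ho-morphisms. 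Then $Dg=0$ and $Df'=0$, whence $D(g\circ h)=g\circ(f'-f)=gf'-gf$ and $D(k\circ f')=(g'-g)\circ f'=g'f'-gf'$; that is, $g\circ h\colon gf\simeq gf'$ and $k\circ f'\colon gf'\simeq g'f'$. By transitivity $gf\simeq g'f'$, which is the asserted compatibility.

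The main obstacle is the pair of identities $D^2=0$ and the Leibniz rule: both are routine but require careful tracking of the Koszul signs appearing in $D$ (see \ref{cochaincomplex}) and in the extended composition, in particular on the comparison components $F_u,G_u$ and on the translated maps such as $G_uf_i=G_u\circ f_i[1]$. Once these signs are pinned down, every statement of the lemma follows formally; in fact this identifies the category of ho-morphisms modulo homotopy with $H^0$ of the differential graded category $\underline{\GCc}$ whose Hom-complexes are the $\Hom^\bullet(X,Y)$.
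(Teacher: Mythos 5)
Your identification of the paper's homotopy notion with ``$g-f$ is $Dh$ for some degree $-1$ pre-morphism $h$'' is correct, and the equivalence-relation half of your argument needs only additivity of $D$, exactly as you say; this part is sound and is in substance what the paper does (reflexivity and symmetry are trivial, transitivity is the sum $h+h'$ of homotopies). The gap is in the two identities you declare routine. With $D$ exactly as defined in $\ref{cochaincomplex}$, $D^2\neq 0$. Write $\delta f_i=df_i-(-1)^nf_id$ for the level-wise Hom-complex differential and $\beta(f)_u=f_j\varphi_u-\varphi_uf_i$ for the comparison term; then the paper's formula reads $Df=\left(\delta f_i,\,(-1)^n(\delta F_u+\beta(f)_u)\right)$, with the \emph{same} sign $(-1)^n$ on both terms of the comparison component. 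Since each $\varphi_u$ is a chain map, $\beta$ commutes with $\delta$ without any sign, so killing the cross terms in $D^2$ forces an alternating sign on exactly one of $\delta F_u$ and $\beta(f)_u$ (for instance $\delta F_u+(-1)^n\beta(f)_u$, or $-\delta F_u+\beta(f)_u$); the paper's uniform factor does not achieve this. Concretely, for a degree-$0$ pre-morphism $f$ one computes
$$(D^2f)_u=-2\bigl((\delta f)_j\varphi_u-\varphi_u(\delta f)_i\bigr),$$
which is nonzero as soon as some $f_i$ fails to be a chain map and some $\varphi_u\neq 0$. (One checks that $D^2$ does vanish on odd-degree pre-morphisms, which is why the paper's use of $D$ in degrees $0$ and $-1$ only is consistent.) So the verification you defer to would not go through, there is no dg category with these Hom-complexes and this $D$, and your closing identification of $\pi(\GCc^h)$ with an $H^0$ is false as stated.

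The plan is repairable, but the repair is a missing idea rather than sign bookkeeping. Either (a) replace $D$ by a corrected differential, e.g.\ $D'f:=\left(\delta f_i,\,\delta F_u+(-1)^n\beta(f)_u\right)$: this has the same $0$-cocycles (ho-morphisms), and $(h_i,H_u)$ is a homotopy in the paper's sense if and only if $D'(h_i,-H_u)=g-f$, so the homotopy relation is unchanged; then $(D')^2=0$ holds, the graded Leibniz rule holds for the composition $(g\circ f)=\left(g_if_i,\,(-1)^{|f|}G_uf_i+g_jF_u\right)$, and your formal argument goes through verbatim. Or (b) forgo the dg structure and verify only the two Leibniz instances you actually use, namely $D(g\circ h)=g\circ Dh$ when $Dg=0$ and $h$ has degree $-1$, and its mirror image $D(h\circ g')=(Dh)\circ g'$ when $Dg'=0$; these do hold for the paper's $D$ with the composition extended verbatim, but checking them is precisely the paper's own proof, which exhibits $gh=(g_ih_i,\,G_uh_i+g_jH_u)$ and $hg'=(h_ig'_i,\,H_ug'_i+h_jG'_u)$ as the required homotopies. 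In short: your route is the natural structural one and, once the differential is corrected, it yields more than the lemma; but as written its cornerstone fails against the paper's actual definition.
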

\begin{proof}Symmetry and reflexivity are trivial. To prove transitivity consider three ho-morphisms 
$f,f',f'':X\hto Y$ such that $h:f\simeq f'$, and $h':f'\simeq f''$.
A homotopy from $f$ to $f''$ is given by $$h''=h+h'=(h_i+h_i',H_u+H'_u).$$
This proves that $\simeq$ is an equivalence relation.
Let $h:f\simeq f'$ be a homotopy from $f$ to $f'$. Given a ho-morphism
$g:Y\hto Z$, a homotopy from $gf$ to $gf'$ is given by
$gh=(g_ih_i,G_uh_i+g_jH_u).$
Given a ho-morphism $g':W\hto X$, a homotopy from $fg'$ to $f'g'$ is given by
$hg'=(h_ig'_i,H_ug'_i+h_jG'_u).$
\end{proof}
We will denote by $[X,Y]^h$ the class of ho-morphisms from $X$ to $Y$ modulo homotopy and by
$\pi(\GCc^h):=\GCc^h/\simeq$
the corresponding quotient category $\pi(\GCc^h)(X,Y):=[X,Y]^h$.

The following generalize the constructions given in Section $\ref{Homotopiaditiva}$.
\begin{defi}
Let $f:X\hto Y$ and $g:X\hto Z$ be two ho-morphisms. The \textit{double mapping cylinder of $f$ and $g$} is the object of $\GCc$
defined by
$$\Cc yl(f,g)=\left(\Cc yl(f_i,g_i)\stackrel{\psi_u}{\dashrightarrow}\Cc yl(f_j,g_j)\right),$$
where $\Cc yl(f_i,g_i)$ is the double mapping cylinder of $f_i$ and $g_i$, and for all
$u:i\to j$ the comparison morphism $\psi_u:\Cc yl(f_i,g_i)\to \Cc yl(f_j,g_j)$
is given by
$$\psi_u=
\left(
\begin{matrix}
\varphi_u&0&0\\
\text{-}F_u&\varphi_u&0\\
G_u&0&\varphi_u
\end{matrix}
\right).
$$
\end{defi}
Let $i=(i_i,0):Z\to \Cc yl(f,g)$, $j=(j_i,0):Y\to \Cc yl(f,g)$ and $k=(k_i,0): X[1]\to \Cc yl(f,g)$ be defined
level-wise by the inclusions into the corresponding direct summands.
Then $i$ and $j$ are morphisms of diagrams and $k$ is a homotopy of ho-morphisms
from $jf$ to $ig$.
With these notations:

\begin{lem}\label{caracteritza_double_cyl_ho}
For any object $W$ of $\GCc$, the map
$$\GCc^h(\Cc yl(f,g),W)\to\left\{(h,u,v)\,;\, u\in \GCc^h(Y,W),\, v\in\GCc^h(Z,W),\, h:u f\simeq v g\right\}$$
defined by $t\mapsto (t k,t j,t i)$ is a bijection.
\end{lem}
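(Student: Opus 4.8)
The plan is to reduce the statement to the level-wise bijection of Lemma $\ref{caracteritza_double_cyl}$ and then to check that the comparison data glue correctly into a ho-morphism. First I would record that the map $t\mapsto(tk,tj,ti)$ is compatible with the summand decomposition of the cylinder: since $i$, $j$ and $k$ are built level-wise from the inclusions $i_i,j_i,k_i$ of the ordinary cylinder $\Cc yl(f_i,g_i)=T(X_i)\oplus Y_i\oplus Z_i$, a ho-morphism $t=(t_i,T_u):\Cc yl(f,g)\hto W$ is sent to the triple whose strict components are $t_ik_i,t_ij_i,t_ii_i$ and whose comparison homotopies are $T_uk_i,T_uj_i,T_ui_i$ (using the composition formula $gf=(g_if_i,G_uf_i+g_jF_u)$ with the second factors of $i,j,k$ vanishing). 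Thus at each level the output of our map is exactly the triple attached to $t_i$ by Lemma $\ref{caracteritza_double_cyl}$.

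To produce an inverse, given $(h,u,v)$ with $h:uf\simeq vg$, and writing $h=(h_i,H_u)$, $u=(u_i,U_u)$, $v=(v_i,V_u)$, I would assemble a pre-morphism $t=(t_i,T_u)$ by applying the level-wise inverse of Lemma $\ref{caracteritza_double_cyl}$ to the strict data and the same assembly formula to the homotopy data:
$$t_i(x,y,z)=h_i(x)+u_i(y)+v_i(z),\qquad T_u(x,y,z)=H_u(x)+U_u(y)+V_u(z).$$
By Lemma $\ref{caracteritza_double_cyl}$ each $t_i$ is already a morphism of complexes, so the only nontrivial point is to verify that $t$ is a ho-morphism, that is, that $dT_u+T_ud=\varphi_u t_i-t_j\psi_u$ for every $u:i\to j$. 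The main obstacle is exactly this identity, and it is where the hypothesis $h:uf\simeq vg$ (rather than a merely level-wise homotopy) enters. Expanding $\varphi_u t_i-t_j\psi_u$ on a general element $(x,y,z)$ and using the matrix form of $\psi_u$, whose off-diagonal entries are $-F_u$ and $G_u$, the equation splits into three summand-wise identities. On the $Y$- and $Z$-summands it reduces to the defining ho-morphism equations $dU_u+U_ud=\varphi_u u_i-u_j\varphi_u$ and $dV_u+V_ud=\varphi_u v_i-v_j\varphi_u$ of $u$ and $v$. On the $T(X)$-summand the off-diagonal contributions $-u_jF_u$ and $v_jG_u$ coming from $\psi_u$ recombine with $U_uf_i$ and $V_ug_i$ into the comparison homotopies $(uf)_u=U_uf_i+u_jF_u$ and $(vg)_u=V_ug_i+v_jG_u$ of the composites, and what remains is precisely condition (ii) of Definition $\ref{hohomotopies_cx}$ for $h:uf\simeq vg$. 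This is the one calculation I expect to carry out in full; everything else is formal.

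Finally I would check that the two assignments are mutually inverse, which is immediate level by level. Since $k_i,j_i,i_i$ are the inclusions into the three direct summands of $\Cc yl(f_i,g_i)$, the formula $(h_i,u_i,v_i)\mapsto t_i$ is a two-sided inverse to $t_i\mapsto(t_ik_i,t_ij_i,t_ii_i)$ by Lemma $\ref{caracteritza_double_cyl}$, and the identical algebra applied to the homotopy data shows that $(H_u,U_u,V_u)\mapsto T_u$ inverts $T_u\mapsto(T_uk_i,T_uj_i,T_ui_i)$. Hence the two maps are inverse to one another and the assignment of the lemma is a bijection.
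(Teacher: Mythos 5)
Your proposal is correct and follows exactly the paper's route: the paper's proof consists precisely of defining the inverse by the formulas $t_i(x,y,z)=h_i(x)+u_i(y)+v_i(z)$ and $T_u(x,y,z)=H_u(x)+U_u(y)+V_u(z)$, leaving the verification implicit. Your expansion of $\varphi_u t_i - t_j\psi_u$ into the three summand-wise identities, with the $T(X)$-summand recombining into the homotopy condition for $h:uf\simeq vg$, is exactly the check the paper omits, and it is carried out correctly.
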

\begin{proof}
Define an inverse $(h,u,v)\mapsto t=(t_i,T_u)$ by letting
$$t_i(x,y,z)=h_i(x)+u_i(y)+v_i(z)\text{ and }
T_u(x,y,z)=H_u(x)+U_u(y)+V_u(z).$$
\end{proof}
\begin{defi}
Let $f:X\hto Y$ be a ho-morphism.
\begin{enumerate}
 \item 
The \textit{mapping cylinder} of $f$ is the diagram
$$\Cc yl(f):=\Cc yl(f,1_X)=\left(\Cc yl(f_i)\stackrel{\psi_u}{\dashrightarrow}\Cc yl(f_j)\right).$$
 \item The \textit{mapping cone} of $f$ is the diagram
$$C(f):=\Cc yl(0,f)=\left(C(f_i)\stackrel{\psi_u}{\dashrightarrow}C(f_j)\right).$$
\end{enumerate}
\end{defi}

\begin{cor}\label{ho_cono}For any object $W$ of $\GCc$ the map
$$\GCc^h(C(f),W)\lra\{(h,v)\,;\,v\in\GCc^h(Y,Z),\, h:0\simeq vf\}$$
defined by $t\mapsto (t k,t j)$ is a bijection.
\end{cor}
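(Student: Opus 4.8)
The plan is to deduce the statement directly from Lemma \ref{caracteritza_double_cyl_ho} by specializing the double mapping cylinder to the cone. By definition $C(f)=\Cc yl(0,f)$ is the double mapping cylinder of the pair of ho-morphisms $0\colon X\hto \mathbf{0}$ and $f\colon X\hto Y$, where $\mathbf{0}$ denotes the level-wise zero object of $\GCc$ (so $\mathbf{0}_i$ is the zero complex of $\Cc_i$ for all $i$, with zero comparison morphisms). Thus I would apply Lemma \ref{caracteritza_double_cyl_ho} taking its first morphism to be $0$ and its second morphism to be $f$, so that the two target summands appearing in the cylinder are $\mathbf{0}$ and $Y$.

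Under this substitution, in the parametrizing set of Lemma \ref{caracteritza_double_cyl_ho} the component $u$ ranges over $\GCc^h(\mathbf{0},W)$. The key observation is that this set is trivial: a ho-morphism $u=(u_i,U_u)\colon\mathbf{0}\hto W$ has $u_i\colon\mathbf{0}_i\to W_i$ and $U_u\colon\mathbf{0}_i[1]\to W_j$ both zero, being maps out of a zero object, so $u=0$ is forced. Consequently the homotopy condition $h\colon u\cdot 0\simeq vf$ of Lemma \ref{caracteritza_double_cyl_ho} collapses to $h\colon 0\simeq vf$, and the triple $(h,u,v)=(h,0,v)$ is determined by the pair $(h,v)$.

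Therefore the bijection of Lemma \ref{caracteritza_double_cyl_ho} restricts to a bijection $\GCc^h(C(f),W)\to\{(h,v)\,;\,v\in\GCc^h(Y,W),\,h\colon 0\simeq vf\}$, in which the trivial middle component has been suppressed; here the surviving inclusions of the two nonzero summands $X[1]$ and $Y$ of $C(f)$ are denoted $k$ and $j$, as in the cone notation of Definition \ref{mapping_cylinder}(2). This yields exactly the asserted correspondence $t\mapsto(tk,tj)$.

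I expect no genuine obstacle, as this is the ho-theoretic analogue of the additive specialization recorded in Definition \ref{mapping_cylinder}(2); the only points requiring care are bookkeeping ones, namely verifying that the comparison morphism $\psi_u$ of $\Cc yl(0,f)$ is precisely the one induced with the off-diagonal $F_u$-entry equal to zero (the homotopy datum of the zero ho-morphism), and identifying the double-cylinder inclusion of the surviving $Y$-summand with the symbol $j$ used in the cone. Alternatively, one may bypass Lemma \ref{caracteritza_double_cyl_ho} and exhibit the inverse explicitly by $(h,v)\mapsto t=(t_i,T_u)$ with $t_i(x,y)=h_i(x)+v_i(y)$ and $T_u(x,y)=H_u(x)+V_u(y)$, in direct imitation of the proof of that lemma.
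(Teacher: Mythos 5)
Your proposal is correct and is essentially the paper's own argument: the paper gives no separate proof, presenting the result as an immediate consequence of Lemma~$\ref{caracteritza_double_cyl_ho}$, which is exactly your specialization $C(f)=\Cc yl(0,f)$ with first map $0\colon X\hto \mathbf{0}$, so that the middle component $u\in\GCc^h(\mathbf{0},W)$ is forced to vanish and the triple $(h,u,v)$ reduces to the pair $(h,v)$. Your bookkeeping remarks (the form of $\psi_u$, relabeling the surviving inclusion as $j$, and reading $\GCc^h(Y,W)$ for the statement's typo $\GCc^h(Y,Z)$) are all consistent with the paper's conventions.
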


\begin{defi}\label{cylinder_diag}
The \textit{cylinder} of a diagram $X$ is the diagram
$$\Cyl(X):=\Cc yl(1_X)=\left(\Cyl(X_i)\stackrel{\psi_u}{\dashrightarrow}\Cyl(X_j)\right).$$
\end{defi}
It is an easy consequence of Lemma $\ref{caracteritza_double_cyl_ho}$ that a
homotopy $h:f\simeq g$ between ho-morphisms $f,g:X\hto Y$ is 
equivalent to a ho-morphism $H:\Cyl (X)\hto L$ satisfying $Hj=f$ and $Hi=g$.

\subsection{Rectification of Ho-morphisms}
The notion of homotopy between ho-morphisms allows to define a new class of strong equivalences of $\GCc$ as follows.
\begin{defi}
A morphism  $f:X\to Y$ of $\GCc$ is said to be a 
\textit{ho-equivalence} if there exists a ho-morphism $g:Y\hto X$, together with homotopies
$gf\simeq 1_{X}$ and $fg\simeq 1_{Y}$. We say that the ho-morphism $g$ is a \textit{homotopy inverse} of $f$.
\end{defi}
Denote by $\Hh$ the class of ho-equivalences of $\GCc$. This class is closed by composition, 
and satisfies $\Ss\subset \Hh\subset \Ww$, where $\Ss$ and $\Ww$ denote the classes of homotopy and weak equivalences of $\GCc$.

Using the approach of Cartan-Eilenberg categories of \cite{GNPR} we study the localized category $\Ho(\GCc):=\GCc[\Ww^{-1}]$ by means of the localization
$\GCc[\Hh^{-1}]$. 

We will first describe $\GCc[\Hh^{-1}]$ in terms of homotopy classes of ho-morphisms.
Consider the solid diagram of functors
$$
\xymatrix{
\ar[d]_{\delta}\GCc\ar[r]^-i&\GCc^h\ar[r]^-{\pi}&\pi(\GCc^h)\\
\GCc[\Hh^{-1}]\ar@{.>}[urr]_{\Psi}
}
$$
Since every morphism of $\Hh$ is an isomorphism in $\pi(\GCc^h)$, there exists a unique dotted functor $\Psi$ making the diagram commute.
Our next objective is to show that $\Psi$ admits an inverse functor.

\begin{nada} 
Let $f:X\hto Y$ be a ho-morphism.
Define a ho-morphism $p:\Cc yl(f)\hto Y$ by the level-wise morphisms
$p_{i}(x,y,z)=y+f_i(z),$
together with the homotopies
$P_{u}(x,y,z)=F_u(z).$
\end{nada}

With these notations we have the following Brown-type factorization lemma for ho-morphisms.

\begin{prop}\label{factor_homor_cx}
Let $f:X\hto Y$ be a ho-morphism. Then the diagram
$$
\xymatrix{
X\ar@{~>}[dr]_{f}\ar[r]^-{i}&\Cc yl(f)\ar@{~>}[d]^-{p}&Y\ar[l]_-{j}\\
&Y\ar@{=}[ur]
}
$$
commutes in $\GCc^h$. In addition,
\begin{enumerate}[(1)]
\item  The maps $j$ and $p$ are weak equivalences.
\item The morphism $j$ is a ho-equivalence with homotopy inverse $p$.
\item  If $f$ is a weak equivalence, then $i$ is a weak equivalence.
\end{enumerate}
\end{prop}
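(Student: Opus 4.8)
The plan is to reduce the statement to the level-wise mapping-cylinder theory of Section~\ref{Homotopiaditiva} and then to control the homotopy-coherence data encoded in the comparison morphisms. Writing $\Cc yl(f_i)=T(X_i)\oplus Y_i\oplus X_i$ at each vertex, the ho-morphism $p=(p_i,P_u)$ has components $p_i(x,y,z)=y+f_i(z)$ and $P_u(x,y,z)=F_u(z)$, while $i=(i_i,0)$ and $j=(j_i,0)$ are given by the inclusions $i_i(z)=(0,0,z)$ and $j_i(y)=(0,y,0)$. Applying the composition law $gf=(g_if_i,\,G_uf_i+g_jF_u)$ I would first verify that the triangle commutes in $\GCc^h$ on the nose: indeed $p\circ i=(p_ii_i,\,P_ui_i)=(f_i,F_u)=f$, and $p\circ j=(p_ij_i,\,P_uj_i)=(1_{Y_i},0)=1_Y$ since $P_uj_i(y)=F_u(0)=0$.

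For part~(1), recall that a ho-morphism is a weak equivalence exactly when all its vertex components are. The maps $j_i$ and $p_i$ are the inclusion and projection of the ordinary mapping cylinder $\Cc yl(f_i)$, and $h_i(x,y,z)=(z,0,0)$ defines a homotopy $j_ip_i\simeq 1$ (one checks $dh_i+h_id=1-j_ip_i$ directly from the differential of Definition~\ref{double_mapping_cylinder}), while $p_ij_i=1$. Thus $j_i$ and $p_i$ are mutually homotopy-inverse, the standard deformation retraction of the mapping cylinder onto its target (paralleling Proposition~\ref{htp_equiv_proj}; see also Section~III.3.2 of \cite{GMa}). Consequently $j_i,p_i\in\Ss_i\subset\Ww_i$, so $j,p\in\Ww$. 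For part~(3), the identity $p\circ i=f$ gives $p_ii_i=f_i$ in each $\Cc_i$; since $p_i\in\Ww_i$, the two-out-of-three property of the weak equivalences shows that $f_i\in\Ww_i$ forces $i_i\in\Ww_i$, whence $f\in\Ww$ implies $i\in\Ww$.

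Part~(2) is the substantive point and the step I expect to be the main obstacle, since it is the only place where the homotopies $F_u$ of $f$ must be reconciled with the contractions $h_i$. As $p\circ j=1_Y$ holds exactly, it remains to produce a homotopy of ho-morphisms $j\circ p\simeq 1_{\Cc yl(f)}$ in the sense of Definition~\ref{hohomotopies_cx}, i.e. a degree $-1$ pre-morphism $h=(h_i,H_u)$ with $Dh=1_{\Cc yl(f)}-j\circ p$. For the vertex part I take the contraction $h_i(x,y,z)=(z,0,0)$ of the previous paragraph, and it remains to solve the coherence equation
$$H_ud-dH_u=G_u-\widetilde F_u+h_j\psi_u-\psi_u h_i,$$
where $\psi_u$ is the comparison morphism of $\Cc yl(f)$, $G_u=0$ is the homotopy part of the identity, and $\widetilde F_u=j_jP_u$ is the homotopy part of $j\circ p$. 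Evaluating with the triangular form of $\psi_u$, one finds $h_j\psi_u(x,y,z)=(\varphi_u z,0,0)$, $\psi_u h_i(x,y,z)=(\varphi_u z,-F_uz,0)$ and $\widetilde F_u(x,y,z)=(0,F_uz,0)$, so that the right-hand side collapses to $0$. Hence $H_u=0$ solves the equation, $h=(h_i,0)$ is the desired homotopy, $p$ is a homotopy inverse of $j$, and $j$ is a ho-equivalence. The only points I would re-examine are the sign conventions in the differential $D$ of $\Cc yl(f)$ and the degree bookkeeping for $h_i$, since these are where a stray sign could slip in.
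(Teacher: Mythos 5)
Your proof is correct and follows essentially the same route as the paper: the commuting of the triangle and parts (1), (3) are handled level-wise, and for part (2) you produce exactly the homotopy the paper uses, namely $h=(h_i,H_u)$ with $h_i(x,y,z)=(z,0,0)$ and $H_u=0$, verifying the same coherence identity $0=-j_jP_u+h_j\psi_u-\psi_uh_i$. The only cosmetic difference is in (3), where the paper merely says the level-wise claim is straightforward while you invoke the two-out-of-three property of $\Ww_i$ via $p_ii_i=f_i$; this holds in all the weak equivalence classes the paper actually uses ($E_r$-quasi-isomorphisms), so it is a legitimate instantiation of the paper's remark.
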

\begin{proof}It is a matter of verification that the diagram commutes.
Since weak equivalences are defined level-wise, (1) and (3) are straightforward. We prove (2).
Let $h_i:\Cc yl(f_i)[1]\to \Cc yl(f_i)$ be defined by
$h_i(x,y,z)=(z,0,0).$
Then the pair $h=(h_i,H_u)$ with $H_u=0$,
is a homotopy from $jp$ to the identity. Indeed,
$(dh_i+h_id)=1-j_ip_i$ and $0=-j_jP_{u}+h_j\psi_u-\psi_uh_i$.
\end{proof}

We will also use the following result concerning ho-morphisms between mapping cylinders.
\begin{lem}\label{mapscilindres}
Given a commutative diagram in $\GCc^h$
$$\xymatrix{
X\ar@{~>}[d]_f\ar@{~>}[r]^a&Z\ar@{~>}[d]^g\\
Y\ar@{~>}[r]^{b}&W
}$$
there is an induced ho-morphism $(a,b)_*:\Cc yl(f)\hto \Cc yl(g)$
which is compatible with $i$, $j$ and $p$. In addition, if $a,b$ are morphisms of $\GCc$ then $(a,b)_*$ is also a morphism of $\GCc$.
\end{lem}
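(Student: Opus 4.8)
The plan is to avoid computing the matrix entries of $(a,b)_*$ directly and instead to build it through the universal property of the mapping cylinder recorded in Lemma~\ref{caracteritza_double_cyl_ho}. Recall that $\Cc yl(g)=\Cc yl(g,1_Z)$ carries the canonical inclusions $i^{g}\colon Z\to\Cc yl(g)$ and $j^{g}\colon W\to\Cc yl(g)$ (which are morphisms of $\GCc$), the projection $p^{g}\colon\Cc yl(g)\hto W$ of Proposition~\ref{factor_homor_cx}, and the distinguished homotopy $k^{g}\colon j^{g}g\simeq i^{g}$ whose premorphism part is trivial, that is $k^{g}=(k^{g}_i,0)$. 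The same structure maps $i^{f},j^{f},p^{f},k^{f}$ attach to $\Cc yl(f)$.

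First I would assemble the data $(h,u,v)$ that Lemma~\ref{caracteritza_double_cyl_ho} requires in order to specify a ho-morphism $\Cc yl(f)\hto\Cc yl(g)$, taking the target $W$ of that lemma to be $\Cc yl(g)$. Set $u:=j^{g}b\in\GCc^h(Y,\Cc yl(g))$ and $v:=i^{g}a\in\GCc^h(X,\Cc yl(g))$. Since the square commutes \emph{in} $\GCc^h$, there is a strict equality of ho-morphisms $bf=ga$, whence $uf=j^{g}(bf)=j^{g}(ga)=(j^{g}g)\,a$. Composing the homotopy $k^{g}$ on the right with $a$ (using the composition rule for homotopies and ho-morphisms established above) gives $k^{g}a\colon(j^{g}g)\,a\simeq i^{g}a=v$, so $h:=k^{g}a$ is a genuine homotopy $h\colon uf\simeq v$. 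Lemma~\ref{caracteritza_double_cyl_ho} then produces a unique ho-morphism $(a,b)_*\colon\Cc yl(f)\hto\Cc yl(g)$ with $(a,b)_*k^{f}=h$, $(a,b)_*j^{f}=u$ and $(a,b)_*i^{f}=v$.

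Compatibility with $i$ and $j$ is immediate from this characterization: $(a,b)_*i^{f}=v=i^{g}a$ and $(a,b)_*j^{f}=u=j^{g}b$. For $p$ I would show $p^{g}(a,b)_*=b\,p^{f}$ by checking that both sides agree after precomposition with $i^{f}$, $j^{f}$ and $k^{f}$, and then invoking the bijectivity in Lemma~\ref{caracteritza_double_cyl_ho}: the identities $p^{g}i^{g}=g$, $p^{g}j^{g}=1_W$ and $p^{g}k^{g}=0$ yield $p^{g}(a,b)_*i^{f}=ga=bf=b\,p^{f}i^{f}$, $p^{g}(a,b)_*j^{f}=b=b\,p^{f}j^{f}$ and $p^{g}(a,b)_*k^{f}=0=b\,p^{f}k^{f}$.

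Finally, for the last assertion I would read off the premorphism part of $(a,b)_*$ from the explicit inverse written in the proof of Lemma~\ref{caracteritza_double_cyl_ho}; it is the sum of the homotopy parts of $h=k^{g}a$, $u=j^{g}b$ and $v=i^{g}a$. Because $k^{g}$, $j^{g}$ and $i^{g}$ all have trivial premorphism part, these contributions reduce to expressions in $A_u$ and $B_u$ alone, so if $a$ and $b$ are morphisms of $\GCc$ (that is $A_u=B_u=0$) the premorphism part of $(a,b)_*$ vanishes and $(a,b)_*$ lies in $\GCc$. The only delicate point is the appeal to the \emph{strict} equality $bf=ga$ in $\GCc^h$, which is precisely what a commutative diagram in $\GCc^h$ supplies and which is what makes $h=k^{g}a$ a legitimate homotopy between $uf$ and $v$; everything else is routine bookkeeping with the composition laws for homotopies and ho-morphisms.
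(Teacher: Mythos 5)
Your proof is correct, and it takes a genuinely different route from the paper's. The paper argues by explicit construction: it defines $(a,b)_*$ level-wise by $(x,y,z)\mapsto (a_i(x),b_i(y),a_i(z))$, which is a chain map because $g_ia_i=b_if_i$, and then writes down explicit comparison homotopies whose verification uses the homotopy-part identity $G_ua_i+g_jA_u=B_uf_i+b_jF_u$, i.e.\ the second half of the strict equality $ga=bf$ in $\GCc^h$; the compatibilities with $i$, $j$ and $p$ are then read off from the formulas. You instead characterize $(a,b)_*$ through the universal property of the cylinder (Lemma~\ref{caracteritza_double_cyl_ho}), feeding it the triple $(k^{g}a,\,j^{g}b,\,i^{g}a)$; the strict equality $ga=bf$ enters only to identify $(j^{g}g)a$ with $(j^{g}b)f$, so that $k^{g}a$ is a legitimate homotopy $uf\simeq v$. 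Compatibility with $i$ and $j$ is then definitional, and compatibility with $p$ follows from injectivity of $t\mapsto(tk,tj,ti)$ together with $p^{g}i^{g}=g$ and $p^{g}j^{g}=1$ (Proposition~\ref{factor_homor_cx}) and the easily checked $p^{g}k^{g}=0$. What your route buys is conceptual economy: the three commutation statements become formal consequences of uniqueness, with no matrix or differential computations. What the paper's route buys is the explicit formula, which you in any case must recover (via the explicit inverse in the proof of Lemma~\ref{caracteritza_double_cyl_ho}) to establish the final assertion that $(a,b)_*$ lies in $\GCc$ when $a$ and $b$ do. One caveat: unwinding your construction with the paper's stated right-composition rule $hg'=(h_ig'_i,H_ug'_i+h_jG'_u)$ yields the homotopy part $(A_u(x),B_u(y),A_u(z))$, whereas the paper's proof displays $(-A_u(x),B_u(y),-A_u(z))$; checking the ho-morphism identity directly against Definition~\ref{premorfisme} shows that neither set of signs is exactly right, and the discrepancies trace back to sign conventions (in the composition rules for homotopies and in the direction of the cylinder homotopies) that the paper itself treats loosely. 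This does not affect your argument: the homotopy parts you obtain are in every case linear expressions in $A_u$ and $B_u$ with trivial contributions from $i^{g}$, $j^{g}$, $k^{g}$, so they vanish when $A_u=B_u=0$, which is all the last claim requires.
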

\begin{proof}
Since $g_ia_i=f_ib_i$, the assignation
$(x,y,z)\mapsto (a_i(x), b_i(y), a_i(z))$ gives a well defined morphism of complexes
$(a_i,b_i)_*:\Cc yl(f_i)\to \Cc yl(g_i)$. 
Since $G_ua_i+g_jA_u=b_jF_u+B_uf_i$, the assignation
$(x,y,z)\mapsto (-A_u(x),B_u(y),-A_u(z))$ defines a homotopy from $\psi_u(a_i,b_i)_*$ to $(a_j,b_j)_*\psi_u$.
We get a ho-morphism $(a,b)_*:\Cc yl(f)\hto \Cc yl(g)$ making the following diagram commute in $\GCc^h$.
$$
\xymatrix{
X\ar@{~>}[d]_{a}\ar[r]^-{i}&\ar@{~>}[d]_{(a,b)_*}\Cc yl(f)\ar@{~>}[r]^-{p}&Y\ar@{~>}[d]_{b}\ar[r]^-{j}&\Cc yl(f)\ar@{~>}[d]_{(a,b)_*}\\
Z\ar[r]^-{i}&\Cc yl(g)\ar@{~>}[r]^-{p}&W\ar[r]^-{j}&\Cc yl(g)\\
}
$$
If $a,b$ are morphisms of $\GCc$ then $A_u=0$ and $B_u=0$, hence $(\alpha,\beta)_*$ is also a morphism of $\GCc$.
\end{proof}

\begin{nada}\label{def_phi_cx}
Given two objects $X$ and $Y$ of $\GCc$, define a map
$$\Phi_{X,Y}:\GCc^h(X,Y)\lra \GCc[\Hh^{-1}](X,Y)$$
as follows.
Let $f:X\hto Y$ be a ho-morphism. By Proposition $\ref{factor_homor_cx}$ we have $f=p_fi_f$ in $\GCc^h$, where $i_f$ is a morphism of $\GCc$ and
$p_f$ is a homotopy inverse for the ho-equivalence $j_f$. Let
$$\Phi_{X,Y}(f):=j_f^{-1}i_f\in \GCc[\Hh^{-1}].$$
\end{nada}

\begin{teo}\label{phipsi}
The above map induces a well defined functor 
$\Phi:\pi(\GCc^h)\to \GCc[\Hh^{-1}]$
which is an inverse functor of $\Psi$.
\end{teo}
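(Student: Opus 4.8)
The plan is to reduce the whole statement to a single composition formula, from which homotopy invariance, functoriality, and both inverse relations follow formally. Write $\delta:\GCc\to\GCc[\Hh^{-1}]$ for the localization functor. For a ho-morphism $f$ the factorization of Proposition~\ref{factor_homor_cx} provides morphisms $i_f:X\to\Cc yl(f)$ and $j_f:Y\to\Cc yl(f)$ of $\GCc$ with $j_f\in\Hh$, together with $p_f:\Cc yl(f)\hto Y$ satisfying $p_fi_f=f$ and $p_fj_f=1_Y$ in $\GCc^h$, so that by definition $\Phi_{X,Y}(f)=\delta(j_f)^{-1}\delta(i_f)$. The two facts I isolate are: (A) for a morphism $g$ of $\GCc$ one has $\Phi(g)=\delta(g)$; and (C) for a morphism $s:X\to Y$ of $\GCc$ and a ho-morphism $g:Y\hto Z$ one has $\Phi(gs)=\Phi(g)\,\delta(s)$.

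Both are established with the mapping cylinder. For (A), since $g$ is a morphism of $\GCc$ the homotopies $F_u=0$ vanish, hence $p_g:\Cc yl(g)\to Y$ is itself a morphism of $\GCc$ with $p_gi_g=g$ and $p_gj_g=1_Y$; inverting $\delta(j_g)$ gives $\Phi(g)=\delta(j_g)^{-1}\delta(i_g)=\delta(p_g)\delta(i_g)=\delta(g)$. For (C), I apply Lemma~\ref{mapscilindres} to the square with vertical maps $gs$ and $g$ and horizontal maps $s$ and $1_Z$, which commutes in $\GCc^h$. Because the horizontal maps are morphisms of $\GCc$, the last sentence of Lemma~\ref{mapscilindres} makes the induced comparison $\psi:=(s,1_Z)_*:\Cc yl(gs)\to\Cc yl(g)$ a morphism of $\GCc$, and its compatibility with $i$ and $j$ yields the identities $\psi\,i_{gs}=i_g\,s$ and $\psi\,j_{gs}=j_g$ in $\GCc$. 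Applying $\delta$ and using $j_{gs},j_g\in\Hh$ gives $\delta(j_{gs})^{-1}=\delta(j_g)^{-1}\delta(\psi)$, whence $\Phi(gs)=\delta(j_{gs})^{-1}\delta(i_{gs})=\delta(j_g)^{-1}\delta(\psi)\delta(i_{gs})=\delta(j_g)^{-1}\delta(i_g)\delta(s)=\Phi(g)\delta(s)$.

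Homotopy invariance and functoriality now follow from (C). A homotopy $h:f\simeq g$ between ho-morphisms $X\hto Y$ corresponds to a ho-morphism $H:\Cyl(X)\hto Y$ with $Hj'=f$ and $Hi'=g$, where $i',j':X\to\Cyl(X)$ are the inclusions and $p':\Cyl(X)\to X$ the projection, with $p'i'=p'j'=1_X$ and $p'\in\Hh$ (Proposition~\ref{factor_homor_cx} applied to $1_X$). By (C), $\Phi(f)=\Phi(H)\delta(j')$ and $\Phi(g)=\Phi(H)\delta(i')$, while $\delta(i')=\delta(p')^{-1}=\delta(j')$; hence $\Phi(f)=\Phi(g)$ and $\Phi$ descends to $\pi(\GCc^h)$. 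For functoriality, $\Phi(1_X)=\delta(1_X)$ by (A); for composable ho-morphisms $f:X\hto Y$ and $g:Y\hto Z$, I write $gf=(gp_f)\,i_f$ and apply (C) twice: first $\Phi(gf)=\Phi(gp_f)\,\delta(i_f)$, and then, using $p_fj_f=1_Y$, $\Phi(gp_f)\,\delta(j_f)=\Phi(gp_fj_f)=\Phi(g)$, so $\Phi(gp_f)=\Phi(g)\delta(j_f)^{-1}$ and therefore $\Phi(gf)=\Phi(g)\delta(j_f)^{-1}\delta(i_f)=\Phi(g)\Phi(f)$.

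It remains to check that the two composites are identities. For $\Psi\Phi$, since $\Psi\delta=\pi i$ and $p_f$ is a homotopy inverse of $j_f$, I compute $\Psi\Phi(f)=[j_f]^{-1}[i_f]=[p_f][i_f]=[p_fi_f]=[f]$, so $\Psi\Phi=\mathrm{id}_{\pi(\GCc^h)}$. For $\Phi\Psi$, both $\Phi\Psi$ and the identity are endofunctors of $\GCc[\Hh^{-1}]$, and by (A) together with $\Psi\delta=\pi i$ they agree on every $\delta(g)$ with $g$ a morphism of $\GCc$; by the universal property of the localization they coincide. I expect the main obstacle to be formula (C): the delicate point is the bookkeeping of morphisms of $\GCc$ versus ho-morphisms, for one needs the induced map between the cylinders to be a strict morphism of $\GCc$ — which is precisely why the horizontal maps of the square are taken in $\GCc$ — so that the compatibility identities hold on the nose and survive localization. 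Once (C) is in place, everything else is formal.
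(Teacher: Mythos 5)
Your proof is correct, and it takes a genuinely different route from the paper's. The paper establishes functoriality and homotopy invariance by two separate diagram chases: for composition it introduces the auxiliary ho-morphisms $a=(1_X,g)_*$, $b=j_{gf}\circ g$, $c=j_{gf}\circ p_g$ and a second layer of mapping cylinders, and for homotopy invariance the comparison maps $(i_X,1_Y)_*$ and $(j_X,1_Y)_*$ into $\Cc yl(h)$; because these applications of Lemma~\ref{mapscilindres} have ho-morphisms as horizontal entries, the resulting diagrams commute only up to homotopy, and the paper must first pass to the intermediate localization $\GCc[\Ss^{-1}]$ (using $\Ss\subset\Hh$) to convert homotopy commutativity into equality of morphisms. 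You instead isolate the single precomposition formula (C), and you invoke Lemma~\ref{mapscilindres} only in the case where the horizontal maps are strict, so that $(s,1_Z)_*$ is a morphism of $\GCc$ and the compatibilities $\psi\, i_{gs}=i_g s$ and $\psi\, j_{gs}=j_g$ hold on the nose (two strict morphisms that agree level-wise are equal, so commutativity in $\GCc^h$ between strict maps is commutativity in $\GCc$); consequently no intermediate localization is needed. Homotopy invariance, via $H:\Cyl(X)\hto Y$ and $\delta(i')=\delta(p')^{-1}=\delta(j')$, and functoriality, via $gf=(gp_f)i_f$ and $(gp_f)j_f=g$, then follow formally from (A) and (C); this also makes explicit the identity $\Phi(g)=\delta(g)$ for strict $g$, which the paper uses tacitly in its final step. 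Lastly, for $\Phi\Psi=\mathrm{id}$ you invoke the uniqueness clause of the universal property of the localization $\delta$, where the paper instead checks the generators (inverses of ho-equivalences) by hand using a chosen homotopy inverse. What each approach buys: the paper's argument is concrete and keeps all verification inside explicit cylinder diagrams, at the cost of the detour through $\GCc[\Ss^{-1}]$ and a bulkier composition proof; yours is shorter and more formal, concentrating the geometric content of the theorem into the two reusable identities (A) and (C) and leaving the rest to category-theoretic generalities.
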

\begin{proof}
Since $\Ss\subset \Hh$, the functor $\delta:\GCc\to \GCc[\Hh^{-1}]$
factors through $\gamma:\GCc\to \GCc[\Ss^{-1}]$.
Hence to prove that two morphisms of $\GCc[\Hh^{-1}]$ coincide, it suffices to prove that they coincide in $\GCc[\Ss^{-1}]$.

We first show that the assignation $(X,Y)\mapsto \Phi_{X,Y}$ defines a functor $\Phi:\GCc^h\lra \GCc[\Hh^{-1}]$.
Given an object $X\in\GCc$, the diagram
$$
\xymatrix{
X\ar@{=}[rd]\ar[r]^-{i}&\ar[d]^-{p}Cyl(X)&X\ar[l]_-{j}\\
&X\ar@{=}[ur]
}
$$
commutes in $\GCc$. Hence $j^{-1}i=1_X$ in $\GCc[\Hh^{-1}]$, and $\Phi(1_X)=1_X$ for all $X\in \GCc$.

Let $f:X\hto Y$ and $g:Y\hto Z$ be two ho-morphisms. To show that $\Phi(g)\circ \Phi(f)=\Phi(g\circ f)$
consider the following diagram
$$\xymatrix{
\ar@{~>}[d]_a\Cc yl(f)&\ar@{~>}[d]_b\ar[l]_-{j_f}\ar[r]^-{i_g}Y&\ar@{~>}[d]_c\Cc yl(g)\\
\Cc yl(gf)\ar@{=}[r]&\Cc yl(gf)\ar@{=}[r]&\Cc yl(gf)
}$$
where $a:=(1_X,g)_*$, $b:=j_{gf}\circ g$ and $c:=j_{gf}\circ p_g$.
By Lemma $\ref{mapscilindres}$ the above diagram commutes in $\GCc^h$.
Taking factorizations we obtain a diagram of morphisms in $\GCc$
$$
\xymatrix{
&\Cc yl(gf)\ar[d]_{j_a}\ar@{=}[r]&\Cc yl(gf)\ar[d]_{j_b}\ar@{=}[r]&\Cc yl(gf)\ar[d]_{j_c}\\
X\ar[ur]^{i_{gf}}\ar[dr]_{i_{f}}\ar[r]&\Cc yl(a)&\Cc yl(b)\ar[l]_{(j_f,1)_*}\ar[r]^{(i_g,1)_*}&\Cc yl(c)&\ar[l]Y\ar[dl]^{j_g}\ar[ul]_{j_{gf}}\\
&\ar[u]^{i_a}\Cc yl(f)&\ar[l]_{j_f}\ar[r]^{i_g}Y\ar[u]^{i_b}&\Cc yl(g)\ar[u]^{i_c}
}
$$
which commutes in $\GCc[\Ss^{-1}]$. Therefore
$j_{gf}^{-1}i_{gf}=j_{g}^{-1}i_{g}j_{f}^{-1}i_{f}$ in $\GCc[\Hh^{-1}]$.
This proves that
$\Phi:\GCc^h\to \GCc[\Hh^{-1}]$ is a functor.

We next show that if $f\simeq g$ then $\Phi(f)=\Phi(g)$.
A homotopy from $f$ to $g$ defines a ho-morphism $h:\Cyl(X)\hto Y$ such that $hi_X=f$ and $hj_X=g$.
Consider the morphisms $f_*:=(i_X,1_Y)_*$ and $g_*:=(j_X,1_Y)_*$ defined by Lemma $\ref{mapscilindres}$.
Then the following diagram commutes in $\GCc[\Ss^{-1}]$.
$$
\xymatrix{
&\Cc yl(f)\ar[d]^{f_*}&\\
X\ar[dr]_{i_g}\ar[ur]^{i_f}\ar[r]^{i_h}&\Cc yl(h)&Y\ar[l]_{j_h}\ar[dl]^{j_g}\ar[ul]_{j_f}\\
&\Cc yl(g)\ar[u]_{g_*}&
}
$$
 Hence $\Phi(f)=\Phi(g)$.
This proves that $\Phi$ induces a well defined functor
$\Phi:\pi(\GCc^h)\to\GCc[\Hh^{-1}]$.

Lastly, we show that $\Phi$ is an inverse functor of $\Psi$.
Let $f:X\hto Y$ be a ho-morphism. Then
$\Psi(\Phi([f]))=\Psi(j_f^{-1}i_f)=[p_fi_f]=[f].$
To check the other composition it suffices to show that if $g:X\to Y$ is a ho-equivalence, then $\Phi(\Psi(g^{-1}))=g^{-1}$.
Let $h:Y\hto X$ be a homotopy inverse of $g$. By definition we have $\Psi(g^{-1})=[h]$. Then
$g\circ \Phi(\Psi(g^{-1}))=[g]\circ \Phi([h])=\Phi([g\circ h])=1$.
Therefore $\Phi(\Psi(g^{-1}))=g^{-1}$. This proves that $\Phi$ is an inverse functor of $\Psi$.
\end{proof}

\subsection{Fibrant Models of Diagrams}
\begin{nada}
Denote by $\GCc_{f}$ the full subcategory of $\GCc$ of those objects
$Q=(Q_i\stackrel{\varphi_u}{\dashrightarrow}Q_j)$
of $\GCc$ such that for all $i\in I$, $Q_i$ is fibrant in $(\Cc_i,\Ss_i,\Ww_i)$ in the sense of \cite{GNPR}, that is: 
every weak equivalence $f:X\to Y$ in $\Cc_i$
induces a bijection
$w^*:[Y,Q_i]\to [X,Q_i]$.
Condition (D$_3$) of $\ref{indexcat}$ ensures that for all $u:i\to j$, the object
$u_*(Q_i)$ is fibrant in $(\Cc_j,\Ss_j,\Ww_j)$.
\end{nada}
\begin{prop}\label{fibrants_ho}
Let $Q\in \GCc_{f}$ and let $w:X\hto Y$ be an ho-morphism. If $w$ is a weak equivalence then it induces a bijection
$w^*:[Y,Q]^h\lra[X,Q]^h$
between homotopy classes of ho-morphisms.
\end{prop}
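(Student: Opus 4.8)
The plan is to reinterpret homotopy classes of ho-morphisms as a cohomology group and then deduce the statement from the level-wise fibrancy of $Q$ by a five lemma argument. Following Definitions $\ref{premorfisme}$ and $\ref{hohomotopies_cx}$ and the differential $D$ of $\ref{cochaincomplex}$, the pre-morphisms of degree $n$ from $X$ to $Q$ form an abelian group, and these assemble into a cochain complex $(\Hom^\bullet(X,Q),D)$ whose degree-$n$ term is
$$\Hom^n(X,Q)=\bigoplus_{i\in I}\Hom_i^n(X_i,Q_i)\oplus\bigoplus_{u:i\to j}\Hom_j^{n-1}(u_*X_i,Q_j),$$
where $\Hom_i^\bullet(A,B)$ denotes the internal Hom-complex of $\Cc_i$ associated with the translation functor $T_{\alpha_i}$. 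An ho-morphism is precisely a $0$-cocycle, and by Definition $\ref{hohomotopies_cx}$ a homotopy of ho-morphisms is precisely a $(-1)$-cochain whose coboundary is the difference; hence $[X,Q]^h=H^0(\Hom^\bullet(X,Q))$. It therefore suffices to prove that $w$ induces a quasi-isomorphism $w^*\colon\Hom^\bullet(Y,Q)\to\Hom^\bullet(X,Q)$.

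First I would check that $\Hom^\bullet(-,Q)$ is a functor from $\GCc^h$ to cochain complexes: precomposition $\phi\mapsto\phi\circ w$ extends from ho-morphisms to all pre-morphisms, and since $w$ satisfies $Dw=0$, the Leibniz rule for $D$ with respect to composition gives $D(\phi\circ w)=\pm(D\phi)\circ w$, so $w^*$ is a chain map. This is a routine verification with signs. Next I would filter $\Hom^\bullet(X,Q)$ by the short exact sequence of complexes
$$0\to\bigoplus_{u:i\to j}\Hom_j^\bullet(u_*X_i,Q_j)[-1]\to\Hom^\bullet(X,Q)\to\bigoplus_{i\in I}\Hom_i^\bullet(X_i,Q_i)\to 0,$$
whose subcomplex collects the homotopies $F_u$ (the ``edge'' part) and whose quotient collects the morphisms $f_i$ (the ``vertex'' part); the term of $D$ mixing the two factors is exactly the comparison defect $f_j\varphi_u-\varphi_uf_i$.

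The key observation is that $w^*$ respects this sequence \emph{even when $w$ is not a strict morphism}, so there is no need to reduce to the strict case via the factorization of Proposition $\ref{factor_homor_cx}$. Indeed, by the composition law $(\phi\circ w)_u=\Phi_u\circ u_*(w_i)+\phi_j\circ W_u$, a pre-morphism with vanishing vertex part is sent to one with vanishing vertex part; thus $w^*$ preserves the subcomplex, and it induces on the subcomplex and on the quotient, respectively, the precompositions $\Phi_u\mapsto\Phi_u\circ u_*(w_i)$ and $\phi_i\mapsto\phi_i\circ w_i$. Now I invoke fibrancy: as $w$ is a weak equivalence, each $w_i$ is a weak equivalence in $\Cc_i$ and, by axiom (D$_3$), so is each $u_*(w_i)$ in $\Cc_j$; since $Q_i$ and $Q_j$ are fibrant, the induced maps $\Hom_i^\bullet(Y_i,Q_i)\to\Hom_i^\bullet(X_i,Q_i)$ and $\Hom_j^\bullet(u_*Y_i,Q_j)\to\Hom_j^\bullet(u_*X_i,Q_j)$ are quasi-isomorphisms. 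Hence $w^*$ is a quasi-isomorphism on both subcomplex and quotient, and the five lemma applied to the two long exact cohomology sequences shows that $w^*\colon\Hom^\bullet(Y,Q)\to\Hom^\bullet(X,Q)$ is a quasi-isomorphism. Passing to $H^0$ yields the bijection $w^*\colon[Y,Q]^h\to[X,Q]^h$.

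The main obstacle is this last use of fibrancy: the notion of fibrant object of $\cite{GNPR}$ is phrased as a bijection on homotopy classes, i.e. only in cohomological degree $0$, whereas the five lemma needs the \emph{full} quasi-isomorphism of the level Hom-complexes. To upgrade the degree-$0$ bijection to all degrees one applies the fibrancy bijection to the translates of $w_i$, which requires the class $\Ww_i$ of weak equivalences of $\Cc_i$ to be stable under the translation functor $T_{\alpha_i}$. This stability holds in all cases of interest — in particular for the classes $\Ee_r$ of $E_r$-quasi-isomorphisms, which only shift the indexing of the spectral sequence — and is the one place where the concrete structure of the vertex categories is used; everything else in the argument is formal homological algebra.
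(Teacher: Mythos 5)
Your argument is correct in substance, but it takes a genuinely different route from the paper's. The paper works entirely at the level of homotopy classes: for surjectivity it lifts each $f_i$ through the fibrancy bijection for $w_i$, then repairs the comparison homotopies by applying fibrancy to the auxiliary weak equivalence $w_i\oplus 1\oplus 1:\Cc yl(w_i,w_i)\to\Cyl(Y_i)$ via Lemma $\ref{caracteritza_double_cyl}$, assembling $g=(g_i,G_u)$ and the homotopy $H=(h_i,H_u)$ explicitly; for injectivity it reduces, through the mapping cone and Corollary $\ref{ho_cono}$, to surjectivity applied to $w\oplus 1:C(w)\to C(1_Y)$. You instead linearize the whole problem: $[X,Q]^h=H^0(\Hom^\bullet(X,Q))$, the edge/vertex short exact sequence (available precisely because (D$_0$) forbids composable non-identity arrows, so the total complex has only two layers), and the five lemma. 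Your key observation --- that $w^*$ preserves the edge subcomplex even when $w$ carries nontrivial homotopy components $W_u$, since $(\phi\circ w)_u=\Phi_u\circ u_*(w_i)+\phi_j\circ W_u$ has no vertex contribution when $\phi$ has none --- is correct, and it is exactly what lets you bypass the paper's cylinder constructions and any appeal to Proposition $\ref{factor_homor_cx}$. What each approach buys: yours is shorter, more structural, and proves more (bijections on $H^n$ for every $n$, not just $n=0$); the paper's stays inside the homotopy-class language in which GNPR fibrancy is phrased and produces the lift and the homotopy by explicit formulas. One cosmetic caveat: the differential displayed in $\ref{cochaincomplex}$ does not literally satisfy $D^2=0$ as printed (the sign of the comparison term must be adjusted), so the ``routine verification with signs'' you defer genuinely requires fixing conventions; this does not affect the identification of $0$-cocycles and $(-1)$-cochains with ho-morphisms and homotopies.

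The substantive point is the one you flag yourself: upgrading the degree-$0$ fibrancy bijection to a quasi-isomorphism of level Hom complexes needs $T_{\alpha_i}(\Ww_i)\subset\Ww_i$ (in fact only $H^{-1},H^0,H^1$ enter the five lemma at $H^0$, so a single translation step $T(w_i)$, $T(u_*w_i)$ suffices). This hypothesis is not among (D$_0$)--(D$_3$), so strictly speaking your proof establishes the Proposition under an extra standing assumption. However, the paper's own proof is in the same position: the assertions that $w_i\oplus1\oplus1:\Cc yl(w_i,w_i)\to\Cyl(Y_i)$ and $w\oplus1:C(w)\to C(1_Y)$ are weak equivalences also do not follow from the bare axioms, and encode a comparable stability of $\Ww_i$ under translation-and-extension type constructions. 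Both versions are sound in every case the paper uses: for $\Ww_i=\Ee_r$ (and $\Ee_{r,0}$, $\Qq$) with the matched translation $T_r$, the spectral sequence of $T_rK$ is a reindexing of that of $K$, so $E_r$-quasi-isomorphisms are stable under $T_r^{\pm1}$, as you claim. If you want your version airtight in the abstract setting, state translation-stability of the $\Ww_i$ as an added hypothesis in (D$_2$).
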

\begin{proof}
We first prove surjectivity. Let $f:X\hto Q$ be a ho-morphism. Since $Q_i$ is fibrant in $\Cc_i$, there exists a morphism $g_i:Y_i\to Q_i$
together with a homotopy $h_i:g_iw_i\simeq f_i$, for all $i\in I$.
The chain of homotopies
$$g_j\varphi_uw_i\stackrel{-g_jW_u}{\simeq}g_jw_j\varphi_u\stackrel{h_j\varphi_u}
{\simeq}f_j\varphi_u\stackrel{F_u}{\simeq}\varphi_uf_i\stackrel{-\varphi_uh_i}{\simeq}\varphi_ug_iw_i$$
gives a homotopy
$G_u':=h_j\varphi_u-\varphi_uh_i+F_u-g_jW_u$
from $g_j\varphi_uw_i$ to $\varphi_ug_iw_i$.
By Lemma $\ref{caracteritza_double_cyl}$ the triple $(G_u', g_j\varphi_u,\varphi_ug_i)$ defines a morphism
$t_u:\Cc yl(w_i,w_i)\to Q_j$. We have a 
solid diagram
$$
\xymatrix{
\ar[d]_{w_i\oplus 1\oplus 1}\Cc yl(w_i,w_i)\ar[rr]^-{t_u}&&Q_j\\
\Cyl(Y_i)\ar@{.>}[urr]_{t_u'}&&.
}
$$
where the vertical arrow is a weak equivalence.
Since $u_*(Q_i)$ is fibrant in $\Cc_j$, 
there exists a dotted morphism $t_u'$ making the diagram commute up to homotopy.
By Lemma $\ref{caracteritza_double_cyl}$ $t_u'$ defines a triple $(G''_u,g'_u,g''_u)$ such that
$(H_u,h_u',h_u''):(G''_u,g'_u,g''_u)\circ (w_i\oplus 1\oplus 1)\simeq (G'_u,g_j\varphi_u,\varphi_ug_i)$.
Let $G_u:=-h'_u+G_u''+h_u''$. Then
$G_u:g_j\varphi_u\simeq \varphi_ug_i,$
and $H_u:G_uw_i\simeq G_u'$.
The pair $g=(g_i,G_u)$ is a ho-morphism, and $H=(h_i,H_u)$ is a homotopy from $gw$ to $f$.

To prove injectivity it suffices to show that if $f:Y\hto Q$ is such that $h:0\simeq fw$, then $0\simeq f$.
By Lemma $\ref{ho_cono}$ we have a solid diagram
$$
\xymatrix{
\ar@{~>}[d]_{w\oplus 1}C(w)\ar@{~>}[rr]^{(h,f)}&&Q\\
C(1_Y)\ar@{.>}[urr]_{(h',f')}&&.
}
$$
Since $w$ is a weak equivalence, the induced map
$(w\oplus 1)^*:[C(1_Y),Q]^h\lra [C(w),Q]^h$
is surjective. In particular there exists a ho-morphism $f':Y\hto Q$ such that $f'\simeq 0$, together with a homotopy
$f\simeq f'$. Using transitivity we have $f\simeq 0$.
Therefore the map $w^*$ is injective.
\end{proof}

\begin{cor}\label{esfibrant}
Let $Q\in \GCc_{f}$. Every weak equivalence $w:X\to Y$ in $\GCc$ induces a bijection
$$w^*:\GCc[\Hh^{-1}](Y,Q)\lra \GCc[\Hh^{-1}](X,Q).$$
\end{cor}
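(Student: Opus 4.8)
The plan is to deduce this directly from the identification of $\GCc[\Hh^{-1}]$ with $\pi(\GCc^h)$ established in Theorem $\ref{phipsi}$, together with Proposition $\ref{fibrants_ho}$. First I would observe that the weak equivalence $w:X\to Y$ of $\GCc$ becomes a ho-morphism via $F_u=0$, and as such it is a weak equivalence in the ho-morphism structure. Hence Proposition $\ref{fibrants_ho}$ applies and provides a bijection $w^*:[Y,Q]^h\to[X,Q]^h$ on homotopy classes of ho-morphisms, using that $Q\in\GCc_f$.

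Next I would transport this bijection across the isomorphism of categories $\Psi:\GCc[\Hh^{-1}]\to\pi(\GCc^h)$, which is inverse to $\Phi$ by Theorem $\ref{phipsi}$. Evaluating on the fixed target $Q$ gives, for every object $Z$, a bijection $\Psi:\GCc[\Hh^{-1}](Z,Q)\to\pi(\GCc^h)(Z,Q)=[Z,Q]^h$ on Hom-sets. The key compatibility to verify is that $\Psi$ intertwines the two maps both denoted $w^*$. By construction of $\Psi$ one has $\Psi\circ\delta=\pi\circ i$, so $\Psi(\delta(w))=[w]$ is exactly the class of $w$ regarded as a ho-morphism. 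Functoriality of $\Psi$ then yields, for any $\alpha\in\GCc[\Hh^{-1}](Y,Q)$,
$$\Psi(\alpha\circ\delta(w))=\Psi(\alpha)\circ[w],$$
which says precisely that $\Psi$ carries precomposition by $\delta(w)$ on $\GCc[\Hh^{-1}](-,Q)$ to precomposition by $[w]$ on $[-,Q]^h$.

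Putting these together, the map $w^*$ on $\GCc[\Hh^{-1}](-,Q)$ fits into a commutative square whose two vertical arrows are the Hom-set bijections $\Psi$ and whose bottom arrow is the map $w^*$ on $[-,Q]^h$; commutativity is exactly the displayed identity. Since the bottom arrow is a bijection by Proposition $\ref{fibrants_ho}$ and the verticals are bijections by Theorem $\ref{phipsi}$, the top arrow $w^*$ is a bijection as well. The only genuine verification here is the compatibility of the two $w^*$ maps under $\Psi$, which is pure functoriality and poses no real obstacle; all the substantive work has already been carried out in Proposition $\ref{fibrants_ho}$.
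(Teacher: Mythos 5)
Your proof is correct and follows exactly the paper's own route: the paper's proof of this corollary is a one-line citation of Proposition $\ref{fibrants_ho}$ and Theorem $\ref{phipsi}$, and your argument simply makes explicit the functoriality check (that $\Psi$ intertwines the two precomposition maps $w^*$) that the paper leaves implicit.
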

\begin{proof}
It follows from Proposition $\ref{fibrants_ho}$ and Theorem $\ref{phipsi}$.
\end{proof}

We next prove the existence of level-wise fibrant models in $\GCc$.
\begin{prop}
Let $\GCc$ be a category of diagrams satisfying the hypothesis (D$_0$)-(D$_3$) of $\ref{indexcat}$,
and assume that every object of $\Cc_i$ has a fibrant model in $(\Cc_i,\Ss_i,\Ww_i)$.
Then for every object $X$ of $\GCc$ there is an object $Q\in\GCc_f$, together with a
ho-morphism $X\hto Q$, which is a weak equivalence.
\end{prop}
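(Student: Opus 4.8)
The plan is to construct $Q$ by induction on the degree function of $I$, which by (D$_0$) takes only the values $0$ and $1$. Since every non-identity arrow of $I$ goes from a vertex of degree $0$ to one of degree $1$, there are no non-identity composable pairs; hence any family of comparison morphisms $\psi_u$ that I produce automatically satisfies the cocycle condition and defines an object of $\GCc$, so it suffices to treat the two degrees in turn. For each vertex $i$ with $|i|=0$ I use the hypothesis that $\Cc_i$ has fibrant models to choose a fibrant object $Q_i\in\Cc_i$ together with a weak equivalence $\rho_i\colon X_i\to Q_i$ in $\Ww_i$; there are no non-identity arrows between degree-$0$ vertices, so nothing further is required at this stage.

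Next, for each vertex $j$ with $|j|=1$, let $u_1,\dots,u_m$ be the non-identity arrows into $j$, with sources $i_1,\dots,i_m$. By (D$_3$) each $u_{k*}(\rho_{i_k})\colon u_{k*}(X_{i_k})\to u_{k*}(Q_{i_k})$ is a weak equivalence in $\Cc_j$ and each $u_{k*}(Q_{i_k})$ is fibrant. I form in $\Cc_j=\Cx{\Aa_j}$ the wide double mapping cylinder
$$M_j:=X_j\oplus\bigoplus_{k=1}^m\Bigl(T_{\alpha_j}(u_{k*}X_{i_k})\oplus u_{k*}Q_{i_k}\Bigr),$$
gluing $X_j$ to the $u_{k*}Q_{i_k}$ along the spans $u_{k*}Q_{i_k}\xla{u_{k*}\rho_{i_k}}u_{k*}X_{i_k}\xra{\varphi_{u_k}}X_j$; for $m=1$ this is exactly the object $\Cc yl(\varphi_{u_1},u_{1*}\rho_{i_1})$ of Definition $\ref{double_mapping_cylinder}$. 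As recalled there, it carries strict inclusions $\iota_k\colon u_{k*}Q_{i_k}\to M_j$ and $\jmath\colon X_j\to M_j$ of complexes, together with homotopies $k_k\colon\jmath\varphi_{u_k}\simeq\iota_k\,u_{k*}\rho_{i_k}$. Choosing a fibrant model $\sigma\colon M_j\to Q_j$, I set $\psi_{u_k}:=\sigma\iota_k$, $\rho_j:=\sigma\jmath$ and $F_{u_k}:=\sigma k_k$.

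The crux is that $\jmath$, hence $\rho_j=\sigma\jmath$, is a weak equivalence. The inclusion $\jmath\colon X_j\hookrightarrow M_j$ is degreewise split with cokernel $\bigoplus_k C_{\alpha_j}(u_{k*}\rho_{i_k})$, the direct sum of the mapping cones of the replacement maps (Definition $\ref{mapping_cylinder}$). Since each $u_{k*}\rho_{i_k}$ lies in $\Ww_j$, its cone is weakly trivial, and the long exact sequence attached to the degreewise split short exact sequence $0\to X_j\xra{\jmath}M_j\to\bigoplus_k C_{\alpha_j}(u_{k*}\rho_{i_k})\to 0$ then forces $\jmath\in\Ww_j$ by two-out-of-three. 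This is the step where the specific nature of the weak equivalences enters, and it is the main obstacle: the needed implication, namely that the cone of a weak equivalence is weakly trivial and that a degreewise split extension of a weak equivalence by a weakly trivial object is again a weak equivalence, holds for $E_r$- and $E_{r,0}$-quasi-isomorphisms through the long exact sequence of the associated spectral sequences, and I would isolate it as a lemma valid in the abstract setting (D$_1$)--(D$_2$).

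Finally I assemble the data. Each $Q_i$ of degree $0$ is fibrant and each $Q_j$ of degree $1$ is a fibrant model of $M_j$, so $Q=(Q_i\stackrel{\psi_u}{\dashrightarrow}Q_j)$ belongs to $\GCc_f$. The families $(\rho_i,\rho_j)$ and $(F_u)$ constitute a ho-morphism $X\hto Q$: by construction $F_{u_k}=\sigma k_k$ is a homotopy from $\rho_j\varphi_{u_k}$ to $\psi_{u_k}\,u_{k*}\rho_{i_k}$, which is precisely the compatibility required of the homotopies in the definition of a ho-morphism. Since every vertex component $\rho_i,\rho_j$ is a weak equivalence, this ho-morphism is a weak equivalence, completing the construction.
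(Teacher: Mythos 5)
The assembly at degree $0$, the observation that (D$_0$) kills all composability constraints, and the final bookkeeping are fine; the gap is exactly the step you flag as the crux, and it cannot be repaired within the stated hypotheses. You need the inclusion $\jmath\colon X_j\to M_j$ to lie in $\Ww_j$, and you propose to deduce this from a lemma (cone of a weak equivalence is weakly trivial; degreewise split extension by a weakly trivial object preserves weak equivalences) which you claim can be isolated ``in the abstract setting (D$_1$)--(D$_2$)''. It cannot: axiom (D$_2$) only asks that $(\Cc_i,\Ss_i,\Ww_i)$ be a category with strong and weak equivalences, i.e.\ $\Ss_i\subset\Ww_i$, closure under composition, isomorphisms included. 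No cone-detection, exactness, or two-out-of-three property is imposed on $\Ww_i$. Concretely, take $\Cc=\Cx{\Aa}$ with $\Aa$ the category of $\kk$-vector spaces, $\alpha=1$, and let $\Ww$ be the class of maps inducing an isomorphism on $H^0$. This satisfies (D$_1$)--(D$_2$), and fibrant models exist (project onto $H^0$ placed in degree $0$). Now take the one-arrow diagram $i\to j$ with $X_i$ the complex $\kk$ concentrated in degree $1$, $X_j=0$, $\varphi_u=0$. Then $\rho_i\colon X_i\to Q_i=0$ is a legitimate fibrant model, but your $M_j=\Cc yl(\varphi_u,\rho_i)=T(X_i)$ has $H^0(M_j)=\kk$, so $\jmath\colon X_j=0\to M_j$ is not in $\Ww_j$ and no fibrant model $\sigma$ of $M_j$ can make $\rho_j=\sigma\jmath$ a weak equivalence. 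So your construction itself, not merely the auxiliary lemma, fails under the proposition's hypotheses. (Even for the intended classes $\Ww=\Ee_r$, the phrase ``long exact sequence of the associated spectral sequences'' hides real work: a degreewise split exact sequence of filtered complexes gives a long exact sequence at the first page only, and propagating it to page $r$ requires the theory of $r$-cones, which this paper does not develop.)

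The detour through the mapping cylinder is also unnecessary, and avoiding it is precisely how the paper's proof stays within the axioms. Choose fibrant models $f_i\colon X_i\to Q_i$ at \emph{every} vertex, both degrees. For $u\colon i\to j$, axiom (D$_3$) gives $u_*(f_i)\in\Ww_j$, and $Q_j$ is fibrant in $\Cc_j$, so by the defining property of fibrant objects the map $u_*(f_i)^*\colon[u_*(Q_i),Q_j]\to[u_*(X_i),Q_j]$ is a bijection; surjectivity applied to the class of $f_j\varphi_u$ hands you simultaneously a comparison morphism $\varphi_u'\colon u_*(Q_i)\to Q_j$ and a homotopy $F_u$ between $f_j\varphi_u$ and $\varphi_u'\,u_*(f_i)$. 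The pair $(f_i,F_u)$ is then the desired ho-morphism $X\hto Q$ with $Q\in\GCc_f$, with no hypothesis on the $\Ww_i$ beyond (D$_0$)--(D$_3$). If you wish to salvage your cylinder construction, you must add the cone-detection property of the $\Ww_i$ as an explicit extra axiom, which would prove a strictly weaker statement than the one asserted.
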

\begin{proof}
Let $f_i:X_i\to Q_i$ be fibrant models in $(\Cc_i,\Ss_i,\Ww_i)$. By (D$_3$) of $\ref{indexcat}$,
$u_*(Q_i)$ is fibrant in $\Cc_j$. Therefore given the solid diagram
$$
\xymatrix{
X_i\ar[d]_{f_i}\ar[r]^{\varphi_u}&X_j\ar[d]^{f_j}\\
Q_i\ar@{.>}[r]^{\varphi_u'}&Q_j
}
$$
there exists a dotted arrow $\varphi_u'$ together with a homotopy $F_u:f_j\varphi_u\simeq \varphi_u'f_i$.
This defines an object
$Q$ of $\GCc_f$ and a ho-morphism
$f=(f_i,F_u):X\hto Q$ which is a weak equivalence.
\end{proof}

The following result is concerned with a full subcategory $\Dd\subset\GCc$ of a category of diagrams. 
This will be useful for the application to the category of mixed Hodge complexes, which is a subcategory of a convenient diagram category
satisfying a list of Hodge-theoretic axioms.

\begin{teo}\label{diag_cx_sull}
Let $\GCc$ be a category of diagrams satisfying the hypothesis {\normalfont(D$_0$)-(D$_3$)} of $\ref{indexcat}$.
Let $\Dd$ be a full subcategory of $\GCc$ such that:
\begin{enumerate}[(i)]
\item If $f:X\hto Y$ is a ho-morphism between objects of $\Dd$, the mapping cylinder $\Cc yl(f)$ is in $\Dd$.
\item There is a full subcategory $\Mm\subset\Dd\cap \GCc_{f}$ such that for every object $D$ of $\Dd$,
 there exists an object $Q\in\Mm$ together with a ho-morphism
 $D\hto Q$ which is a weak equivalence.
\end{enumerate}
Then the triple $(\Dd,\Hh,\Ww)$ is a right Cartan-Eilenberg category with fibrant models in $\Mm$. 
The inclusion induces an equivalence of categories
$$\pi(\Mm^h)\stackrel{\sim}{\lra} \Dd[\Ww^{-1}],$$
where $\pi(\Mm^h)$ is the category whose objects are those of $\Mm$ and whose morphisms
are homotopy classes of ho-morphisms.
\end{teo}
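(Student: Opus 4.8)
The plan is to verify the abstract definition of a right Cartan-Eilenberg category from \cite{GNPR} for the triple $(\Dd,\Hh,\Ww)$, namely that every object of $\Dd$ admits a fibrant model in $\Mm$ with respect to the classes $\Hh$ and $\Ww$, and then invoke the general equivalence of \cite{GNPR} to identify the localization. Two things must be checked: first, that every object $D$ of $\Dd$ receives a weak equivalence to an object of $\Mm$ (this is hypothesis (ii)), and second, that each object $Q$ of $\Mm$ is genuinely fibrant, i.e. that weak equivalences into $Q$ are seen as isomorphisms after localizing by $\Hh$. The key point is that $\Mm\subset\GCc_f$, so the results already established for $\GCc$ apply to objects of $\Mm$.

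First I would record the fibrancy of objects of $\Mm$. Since $\Mm\subset\Dd\cap\GCc_f$, every $Q\in\Mm$ lies in $\GCc_f$, so Corollary $\ref{esfibrant}$ applies: any weak equivalence $w:X\to Y$ in $\GCc$ (and in particular between objects of $\Dd$) induces a bijection $w^*:\GCc[\Hh^{-1}](Y,Q)\to \GCc[\Hh^{-1}](X,Q)$. This is exactly the fibrancy condition of \cite{GNPR} relative to the pair $(\Hh,\Ww)$, now asserted inside $\Dd$. The only subtlety is that fibrancy must be tested using morphisms internal to $\Dd$, but since $\Dd$ is a \emph{full} subcategory, the hom-sets and ho-morphism sets of $\Dd$ agree with those of $\GCc$, and hypothesis (i) guarantees that the mapping cylinders used to build the relevant factorizations stay within $\Dd$. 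Thus the factorization of Proposition $\ref{factor_homor_cx}$ and the identification of $\GCc[\Hh^{-1}]$ via homotopy classes of ho-morphisms (Theorem $\ref{phipsi}$) all restrict to $\Dd$.

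Next I would assemble these into the Cartan-Eilenberg structure. By hypothesis (ii) every $D\in\Dd$ admits a weak equivalence $D\hto Q$ with $Q\in\Mm$; combined with the fibrancy of $Q$ just verified, this gives a fibrant model of $D$ in $\Mm$ in the sense of \cite{GNPR}, Definition 2.2.1. Having exhibited enough fibrant objects and fibrant models, the triple $(\Dd,\Hh,\Ww)$ is a right Cartan-Eilenberg category, and Theorem 2.3.4 of \cite{GNPR} yields an equivalence between $\Dd[\Ww^{-1}]$ and the category of fibrant minimal models modulo homotopy. It remains to identify the latter with $\pi(\Mm^h)$: since $\Mm$ is full in $\GCc$ and its objects are fibrant, the restriction of Theorem $\ref{phipsi}$ identifies $\Mm[\Hh^{-1}]$ with $\pi(\Mm^h)$, and Corollary $\ref{esfibrant}$ shows the ho-morphisms between fibrant objects already compute morphisms in the localized category.

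\textbf{Main obstacle.} The step I expect to require the most care is confirming that restricting all the preceding constructions from $\GCc$ to the full subcategory $\Dd$ preserves the required properties, and in particular that hypothesis (i)---stability of $\Dd$ under the mapping cylinder $\Cc yl(f)$ for ho-morphisms $f$ between objects of $\Dd$---is precisely what is needed to perform the Brown-type factorization of Proposition $\ref{factor_homor_cx}$ without leaving $\Dd$. One must check that every ho-morphism between objects of $\Dd$ factors as a morphism of $\Dd$ followed by a ho-equivalence whose cylinder components remain in $\Dd$; fullness handles the hom-sets, but the factorization objects are cylinders, and it is exactly (i) that keeps them in $\Dd$. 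Once this localized calculus is seen to be internal to $\Dd$, the identification $\pi(\Mm^h)\stackrel{\sim}{\lra}\Dd[\Ww^{-1}]$ follows formally from the general theory.
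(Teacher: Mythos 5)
Your proposal is correct and takes essentially the same route as the paper: hypothesis (i) is used exactly to make the cylinder factorization of Proposition $\ref{factor_homor_cx}$ and the equivalence $\Dd[\Hh^{-1}]\simeq\pi(\Dd^h)$ of Theorem $\ref{phipsi}$ internal to $\Dd$, hypothesis (ii) together with Corollary $\ref{esfibrant}$ produces the fibrant models in $\Mm$, and Theorem 2.3.4 of \cite{GNPR} yields the final equivalence. The only difference is one of ordering: the paper first sets up the mutually inverse functors $\Psi$ and $\Phi$ on $\Dd$ and then converts the weak equivalence ho-morphism $\rho:D\hto Q$ of (ii) into the morphism $\Phi_{D,Q}(\rho)$ of $\Dd[\Hh^{-1}]$, which is precisely the step you single out as the main obstacle.
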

\begin{proof}
Consider the solid diagram of functors
$$
\xymatrix{
\ar[d]_{\gamma}\Dd\ar[r]^-\pi&\pi(\Dd^h)\\
\Dd[\Hh^{-1}]\ar@{.>}[ur]_{\Psi}&.
}
$$
Since every morphism of $\Hh$ is an isomorphism in $\pi(\Dd^h)$, by the universal
property of the localizing functor $\gamma$, there is a unique dotted functor $\Psi$ making the diagram commute.
By (i), the mapping cylinder $\Cc yl(f)$ of a ho-morphism $f:X\hto Y$ is in $\Dd$.
Hence the map $\Phi_{X,Y}:\Dd^h(X,Y)\to \Dd[\Hh^{-1}](X,Y)$ given by
$f\mapsto j_f^{-1}i_f$ is well defined.
By Theorem $\ref{phipsi}$ it induces a functor $\Phi:\pi(\Dd^h)\to \Dd[\Hh^{-1}]$ which is an inverse of $\Psi$.
In particular, by restriction to $\Mm$ we obtain an equivalence of categories $\Phi:\pi(\Mm^h)\stackrel{\sim}{\lra}\Mm[\Hh^{-1};\Dd]$,
where $\Mm[\Hh^{-1};\Dd]$ denotes the full subcategory of $\Dd[\Hh^{-1}]$ whose objects are in $\Mm$.

By (ii) and Proposition $\ref{esfibrant}$, for every object $D$ of $\Dd$ there exists a fibrant object $Q\in\Mm$ and a ho-morphism
$\rho:D\hto Q$ which is a weak equivalence. Then the morphism $\Phi_{D,Q}(\rho):D\to Q$ of $\Dd[\Hh^{-1}]$ is a fibrant model of $D$.
This proves that the triple $(\Dd,\Hh,\Ww)$ is a Cartan-Eilenberg category with fibrant models in $\Mm$.
By Theorem 2.3.4 of \cite{GNPR} the inclusion induces an equivalence of categories
$\Mm[\Hh^{-1};\Dd]\stackrel{\sim}{\lra}\Dd[\Ww^{-1}]$.
\end{proof}

\section{Mixed and Absolute Hodge Complexes}
\subsection{Diagrams of Filtered Complexes}
Let $I=\{0\to 1\leftarrow 2\to\cdots\leftarrow s\}$ be an index category of zig-zag type and
fixed length $s$, and let $\kk\subset\RR$ be a field.
We next define the category of diagrams of filtered complexes of type $I$ over $\kk$. This is a diagram category
whose vertexes are categories of filtered and bifiltered complexes defined over $\kk$ and $\CC$ respectively.
Additional assumptions on the filtrations will lead to the notions of mixed and absolute Hodge complexes.
\begin{defi}
Let $\mathbf{F}:I\to\mathsf{Cat}$ be the functor defined by
$$
\xymatrix{
0\ar@{|->}[d]\ar[r]^u&1\ar@{|->}[d]&\ar[l]\cdots\ar[r]&\ar@{|->}[d]s-1&\ar[l]_vs\ar@{|->}[d]\\
\FCx{\kk}\ar[r]^{u_*}&\FCx{\CC}&\ar[l]_-{Id}\cdots\ar[r]^-{Id}&\FCx{\CC}&\ar[l]_{v_*}\FFCx{\CC}
}
$$
where $u_*(K_\QQ,W)=(K_\QQ,W)\otimes\CC$ is given by extension of scalars
and $v_*(K_\CC,W,F)=(K_\CC,W)$ is the forgetful functor for the second filtration.
All intermediate functors are identities.
The \textit{category of diagrams of filtered complexes} is the
category of diagrams $\DFCx$ associated with $\mathbf{F}$. Objects of
$\DFCx$ are denoted by $K=((K_\kk,W)\stackrel{\varphi}{\dashleftarrow\dashrightarrow}(K_\CC,W,F))$.
\end{defi}

\begin{nada}For $r\in\{0,1\}$ consider the classes of weak equivalences of $\FCx{\kk}$ and $\FCx{\CC}$ given by $E_r$-quasi-isomorphisms,
and the notion of $r$-homotopy defined by the corresponding $r$-translation functors.
Likewise, in $\FFCx{\CC}$, consider the class of of weak equivalences given by $E_{r,0}$-quasi-isomorphisms,
and the notion of $(r,0)$-homotopy defined by the $(r,0)$-translation functor.
With these choices, the category of diagrams $\DFCx$ satisfies conditions (D$_0$)-(D$_3$) of $\ref{indexcat}$.
\end{nada}

Denote by $\Ee_{r,0}$ the class of level-wise weak equivalences of $\DFCx$.
The level-wise homotopies define the notions of \textit{ho$_r$-morphism} of diagrams and \textit{$(r,0)$-homotopy} between ho$_r$-morphisms.
Denote by $\Hh_{r,0}$ the corresponding class of ho-equivalences: 
these are morphisms of $\DFCx$ that are $(r,0)$-homotopy equivalences as ho$_r$-morphisms.
It satisfies $\Hh_{r,0}\subset \Ee_{r,0}$.

Denote by $\Qq$ the class of weak equivalences of $\DFCx$ given by level-wise quasi-isomorphisms compatible with filtrations.
Since filtrations are regular and exhaustive, 
we have $\Ee_{0,0}\subset \Ee_{1,0}\subset\Qq$.
Hence we have localization functors
$$\Ho_{0,0}\left(\DFCx\right)\to\Ho_{1,0}\left(\DFCx\right)\to\Ho_{}\left(\DFCx\right).$$

Deligne's d\'{e}calage with respect to the filtration $W$ defines a functor
$\Dec_W:\DFCx\to \DFCx$
which sends a diagram $K$ to the diagram
$\Dec_W K:=((K_\kk,\Dec W)\stackrel{\varphi}{\dashleftarrow\dashrightarrow}(K_\CC,\Dec W,F)).$
The functor $\Dec_W$ has a left adjoint $S_W:\DFCx\to \DFCx$ defined by shifting the filtration $W$. We have:
\begin{teo}\label{decw}
Deligne's d\'{e}calage induces an equivalence of categories
$$\Dec_W:\Ho_{1,0}\left(\DFCx\right)\lra \Ho_{0,0}\left(\DFCx\right).$$
\end{teo}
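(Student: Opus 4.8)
The plan is to treat this as the diagram-level analogue of Theorem \ref{cxos_equiv_r} for the single step from $r=1$ to $r=0$, and to run the argument directly through the adjunction $S_W\dashv\Dec_W$ on $\DFCx$ rather than reconstructing intermediate subcategories of $(r,0)$-injective type. The strategy is to show that both $\Dec_W$ and its left adjoint $S_W$ descend to the localized categories and become mutually inverse there, using that one composite, $\Dec_W\circ S_W$, is already the identity functor on the nose, while the other composite $S_W\Dec_W$ is turned into the identity by the counit once $\Ee_{1,0}$ is inverted.

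First I would check that the two functors pass to the localizations. By the very definition of the weight classes one has $\Ee_{1,0}=\Dec_W^{-1}(\Ee_{0,0})$, so $\Dec_W(\Ee_{1,0})\subset\Ee_{0,0}$ and $\Dec_W$ induces a functor $\Ho_{1,0}(\DFCx)\to\Ho_{0,0}(\DFCx)$. For the left adjoint, note that $\Dec_W\circ S_W=1$ on $\DFCx$: this is Lemma \ref{jota0}(1) applied at each vertex, the décalage affecting only the filtration $W$ and commuting with the extension-of-scalars and forgetful functors $u_*$ and $v_*$. Consequently, for $f\in\Ee_{0,0}$ one has $\Dec_W(S_Wf)=f\in\Ee_{0,0}$, whence $S_Wf\in\Dec_W^{-1}(\Ee_{0,0})=\Ee_{1,0}$; thus $S_W(\Ee_{0,0})\subset\Ee_{1,0}$ and $S_W$ induces a functor $\Ho_{0,0}(\DFCx)\to\Ho_{1,0}(\DFCx)$.

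Next I would identify the two composites. The identity $\Dec_W\circ S_W=1$ shows at once that $\Dec_W\circ S_W=1$ on $\Ho_{0,0}(\DFCx)$. For the opposite composite I would use the counit $\varepsilon\colon S_W\Dec_W\to 1$ of the adjunction; since $\Dec_W$ and $S_W$ are genuine functors on the diagram category, $\varepsilon$ is a natural transformation of endofunctors of $\DFCx$ and in particular each $\varepsilon_K$ is a morphism of diagrams. Applying $\Dec_W$ and using $\Dec_W\circ S_W=1$ together with the triangle identity gives $\Dec_W(\varepsilon_K)=\mathrm{id}_{\Dec_W K}$, exactly as in the proof of Theorem \ref{cxos_equiv_r}; hence $\Dec_W(\varepsilon_K)\in\Ee_{0,0}$ and so $\varepsilon_K\in\Dec_W^{-1}(\Ee_{0,0})=\Ee_{1,0}$. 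Since $S_W\Dec_W$ preserves $\Ee_{1,0}$, the transformation $\varepsilon$ descends and becomes a natural isomorphism $S_W\Dec_W\xrightarrow{\sim}1$ in $\Ho_{1,0}(\DFCx)$. Combining the two composites shows that $\Dec_W$ is an equivalence with quasi-inverse $S_W$.

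The one genuine verification hiding behind this formal argument is that the counit $\varepsilon$ is a level-wise weak equivalence, and the point deserving care is the bifiltered vertex $\FFCx{\CC}$, where $\Dec_W$ is the partial décalage taken with respect to $W$ alone while $F$ is left intact. There the identity $\Dec_W\circ S_W=1$ and the relation $\Ee_{1,0}=\Dec_W^{-1}(\Ee_{0,0})$ are precisely the $W$-directed instances furnished by the bifiltered theory of Section 2.5 (cf. Theorem \ref{r_bifilt_ab} and the décalage identities defining $\Ee_{r,0}$); once these are available vertexwise, no further obstacle arises, since the passage from vertices to the diagram category needs nothing beyond the already-recorded fact that $S_W\dashv\Dec_W$ is an adjunction of functors on $\DFCx$.
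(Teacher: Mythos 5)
Your proof is correct, and it takes a more direct route than the paper does. The paper's own proof is a one-line citation of Theorems \ref{cxos_equiv_r} and \ref{r_bifilt_ab}: the equivalence is assembled from the vertex-level statements (the filtered vertices via Theorem \ref{cxos_equiv_r}, the bifiltered vertex via its analogue in the setting of Theorem \ref{r_bifilt_ab}), and inside Theorem \ref{cxos_equiv_r} the argument passes through the intermediate subcategories $\mathbf{C}^{\#}_r(\mathbf{F}\Aa)$ and the strict inverses of Lemma \ref{crinversos}. You instead run the whole argument once, globally on $\DFCx$, using only the adjunction $S_W\dashv \Dec_W$ (which the paper records): $\Dec_W\circ S_W=1$ on the nose; $S_W(\Ee_{0,0})\subset\Ee_{1,0}$ because $\Ee_{1,0}=\Dec_W^{-1}(\Ee_{0,0})$ level-wise; and the counit $\varepsilon_K$ lies in $\Ee_{1,0}$ because $\Dec_W(\varepsilon_K)=\mathrm{id}$ --- the same key computation that drives the paper's proof of Theorem \ref{cxos_equiv_r}, transplanted to the diagram category. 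Two remarks. First, your appeal to the triangle identity tacitly uses that the unit of $S_W\dashv\Dec_W$ is the identity; this is immediate (shift and d\'{e}calage are the identity on morphisms and on underlying complexes, so the adjunction bijection of Proposition \ref{adjunt_dec} is the identity on underlying maps), and it is also how the paper verifies that $\Dec^r(\eps_r)$ is the identity in Theorem \ref{cxos_equiv_r}, so this is a nuance rather than a gap. Second, your organization has a genuine advantage: it addresses head-on the diagram-level statement, which a citation of vertex-level equivalences does not by itself yield (a localization of a diagram category is not a diagram of localizations), the point being precisely that $\Dec_W$, $S_W$, their adjunction and the counit all already live on $\DFCx$, so nothing needs to be assembled; it also makes transparent that no Cartan-Eilenberg structure or injective models enter this particular theorem, only the d\'{e}calage--shift formalism. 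What the paper's route buys in exchange is economy by reuse: the subcategories $\mathbf{C}^{\#}_r$ and the bifiltered machinery of Theorem \ref{r_bifilt_ab} are developed for the $r$-injective model theory anyway, so quoting them keeps this proof to one line.
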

\begin{proof}
The proof follows from Theorems $\ref{cxos_equiv_r}$  and $\ref{r_bifilt_ab}$.
\end{proof}

 \subsection{Hodge Complexes}

\begin{defi}[\cite{DeHIII}, 8.1.5]\label{defMHC}
A \textit{mixed Hodge complex} is a diagram of filtered complexes
$$K=\left((K_\kk,W)\stackrel{\varphi}{\dashleftarrow\dashrightarrow}(K_\CC,W,F)\right),$$
satisfying the following conditions:
\begin{enumerate}
\item [($\text{MH}_0$)] The comparison map $\varphi$ is a string of $E_1^W$-quasi-isomorphisms. The cohomology $H^*(K_\kk)$ has finite type.
\item [($\text{MH}_1$)] For all $p\in\ZZ$, the differential of $Gr_p^WK_\CC$ is strictly compatible with the filtration $F$.
\item [($\text{MH}_2$)]  For all $n\geq 0$ and all $p\in\ZZ$, the filtration $F$ induced on $H^n(Gr^W_pK_{\CC})$ defines a pure Hodge structure of
weight $p+n$ on $H^n(Gr^W_pK_\kk)$.
\end{enumerate}
\end{defi}
Denote by $\MHC$ the category of mixed Hodge complexes of a fixed type $I$,
omitting the index category in the notation.
Note that axiom
($\mathrm{MH}_2$) implies that for all $n\geq 0$ the triple $(H^n(K_\kk),\Dec W,F)$ is a mixed Hodge structure.

An important result due to Deligne
is that given a mixed Hodge complex $K$,
the spectral sequences associated with $(K_\CC,F)$ and $(Gr_p^WK_\CC,F)$ degenerate at $E_1$, while the spectral sequences
associated with $(K_\kk,W)$ and $(Gr^q_FK_\CC,W)$ degenerate at $E_2$ 
(see Scholie 8.1.9 of \cite{DeHIII}).

For convenience, we consider a shifted version of mixed Hodge complex in which all spectral sequences degenerate at the first stage.
This corresponds to the notion of mixed Hodge complex given by Beilinson in \cite{Be}
(see also Section 2.3 of \cite{Le} and \cite{Sa00} where \textit{standard}
and \textit{absolute} weight filtrations are compared).

\begin{defi}
An \textit{absolute Hodge complex} is a diagram of filtered complexes
$$K=\left((K_\kk,W)\stackrel{\varphi}{\dashleftarrow\dashrightarrow}(K_\CC,W,F)\right),$$
satisfying the following conditions:
\begin{enumerate}
\item [($\text{AH}_0$)] The comparison map $\varphi$ is a string of $E_0^W$-quasi-isomorphisms. The cohomology $H^*(K_\kk)$ has finite type.
\item [($\text{AH}_1$)] The four spectral sequences associated with $K$ degenerate at $E_1$.
\item [($\text{AH}_2$)] For all $n\geq 0$ and all $p\in\ZZ$, the filtration $F$ induced on $H^n(Gr^W_pK_{\CC})$ defines a pure Hodge structure of
weight $p$ on $H^n(Gr^W_pK_\kk)$.
\end{enumerate}
\end{defi}
Denote by $\AHC$ the category of absolute Hodge complexes.
By Scholie 8.1.9 of \cite{DeHIII} Deligne's d\'{e}calage with respect to the weight filtration sends every mixed Hodge complex to an absolute Hodge complex.
Hence we have a functor $\Dec_W:\MHC\to\AHC$.
Note however that the shift $S_WK$ of an absolute Hodge complex $K$ is not in general a mixed Hodge complex.

Since the category of mixed Hodge structures is abelian (\cite{DeHII}, Theorem 2.3.5), 
every complex of mixed Hodge structures is an
absolute Hodge complex. We have full subcategories
$$\mathbf{G}^+(\MHS)\lra \Cx{\MHS}\lra\AHC.$$

Our objective is to study the homotopy categories
$$\Ho(\MHC):=\MHC[\Qq^{-1}]\text{ and } \Ho(\AHC):=\AHC[\Qq^{-1}]$$
defined by inverting level-wise quasi-isomorphisms.

The following result follows easily from Theorem 2.3.5 of \cite{DeHII}, stating
that morphisms of mixed Hodge structures are strictly compatible with filtrations.
\begin{lem}\label{quis_es_we}
We have $\Qq\cap \AHC=\Ee_{0,0}\cap \AHC$ and $\Qq\cap \MHC=\Ee_{1,0}\cap \MHC$.
In particular
$$\Ho_{0,0}(\AHC)=\Ho(\AHC)\text{ and }\Ho_{1,0}(\MHC)=\Ho(\MHC).$$
\end{lem}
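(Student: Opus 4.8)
The plan is to establish the two displayed set-equalities and then read off the identities of homotopy categories, since localizing $\AHC$ (resp. $\MHC$) at $\Qq$ or at $\Ee_{0,0}$ (resp. $\Ee_{1,0}$) inverts the same morphisms as soon as these classes agree on the respective subcategory. The inclusions $\Ee_{0,0}\subset\Qq$ and $\Ee_{1,0}\subset\Qq$ being already known (from regularity and exhaustiveness of the filtrations), the whole content lies in the reverse inclusions: a level-wise quasi-isomorphism between absolute (resp. mixed) Hodge complexes is automatically an $E_{0,0}$- (resp. $E_{1,0}$-) quasi-isomorphism. I would prove the absolute statement directly and then deduce the mixed one by d\'ecalage.

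For the absolute case, let $f\colon K\to L$ be a morphism of absolute Hodge complexes lying in $\Qq$. The axioms (AH$_1$) and (AH$_2$) equip $(H^n(K_\kk),W,F)$ and $(H^n(L_\kk),W,F)$ with mixed Hodge structures, and $H^n(f)$ is a morphism of mixed Hodge structures whose underlying $\kk$-linear map $H^n(f_\kk)$ is bijective. Here the cited input enters: since $\MHS$ is abelian (Theorem 2.3.5 of \cite{DeHII}), equivalently since morphisms of mixed Hodge structures are strictly compatible with $W$ and $F$, a bijective morphism of mixed Hodge structures is an isomorphism, so $H^n(f)$ induces isomorphisms on every bigraded piece $Gr_p^WGr^q_F$. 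It then remains to identify these with the cohomologies $H^n(Gr_p^WGr^q_F f)$ and $H^n(Gr_p^W f_\kk)$ that govern membership in $\Ee_{0,0}$, and this is exactly what the degeneration axiom (AH$_1$) provides: the $E_1$-degeneration of the Hodge spectral sequence of $(K_\CC,F)$ and of the weight spectral sequence of $(Gr^q_F K_\CC,W)$ yields natural isomorphisms $H^n(Gr_p^WGr^q_F K_\CC)\cong Gr_p^WGr^q_F H^n(K_\CC)$, while the degeneration of the weight spectral sequence on the $\kk$-vertex (transported through the $E_0^W$-quasi-isomorphisms of $\varphi$ guaranteed by (AH$_0$)) gives $H^n(Gr_p^W K_\kk)\cong Gr_p^W H^n(K_\kk)$. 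Passing the isomorphism of mixed Hodge structures $H^n(f)$ through these identifications shows that $H^n(Gr_p^WGr^q_F f)$ and $H^n(Gr_p^W f_\kk)$ are isomorphisms for all $p,q$ and all $n$; that is, $f\in\Ee_{0,0}$.

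For the mixed case I would reduce to the absolute one. Since $\Dec_W$ refilters only by $W$ and is the identity on underlying complexes and on morphisms, it preserves $\Qq$: the underlying map of $\Dec_W(f)$ is that of $f$, hence a quasi-isomorphism, and it is compatible with $\Dec W$ by functoriality of the d\'ecalage. By Scholie 8.1.9 of \cite{DeHIII}, recalled above, $\Dec_W$ carries $\MHC$ into $\AHC$. Thus if $f\in\Qq\cap\MHC$ then $\Dec_W(f)\in\Qq\cap\AHC$, which by the absolute case lies in $\Ee_{0,0}$; by the definition $\Ee_{1,0}=\Dec_W^{-1}(\Ee_{0,0})$ this means precisely $f\in\Ee_{1,0}$. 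Together with the known inclusion $\Ee_{1,0}\subset\Qq$ this yields $\Qq\cap\MHC=\Ee_{1,0}\cap\MHC$. The final assertion is then immediate: the identities $\Ho_{0,0}(\AHC)=\Ho(\AHC)$ and $\Ho_{1,0}(\MHC)=\Ho(\MHC)$ hold because, on these subcategories, the inverted classes of morphisms coincide.

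The main obstacle is the bookkeeping of the absolute case: one has to move between $H(Gr_\bullet)$ and $Gr_\bullet(H)$ for the two filtrations simultaneously, and the reason these comparisons are isomorphisms rather than mere filtration inclusions is the combination of the full $E_1$-degeneration of all four spectral sequences with the strictness of morphisms of mixed Hodge structures. Once the cohomology functor $H^n\colon\AHC\to\MHS$ is in place and is shown to detect $\Ee_{0,0}$ through degeneration, the remaining steps, including the d\'ecalage reduction, are formal.
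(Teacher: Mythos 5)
Your proof is correct and takes essentially the same route as the paper: the paper's own justification is the one-line appeal to Theorem 2.3.5 of \cite{DeHII} (strictness of morphisms of mixed Hodge structures), which is exactly the key input you use, supplemented by the $E_1$-degenerations of ($\text{AH}_1$) to pass between $H^n(Gr_p^WGr_F^q)$ and $Gr_p^WGr_F^qH^n$. Your d\'{e}calage reduction of the mixed case to the absolute one is just the natural packaging of this within the paper's own framework ($\Ee_{1,0}=\Dec_W^{-1}(\Ee_{0,0})$ and Scholie 8.1.9 of \cite{DeHIII}), so it does not constitute a genuinely different argument, only a fully spelled-out version of the paper's sketch.
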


\begin{lem}\label{tancades_quis}
Let $f:K\to L$ be an $E_{0,0}$- (resp. $E_{1,0}$-) quasi-isomorphism of $\DFCx$.
Then $K$ is an absolute (resp. mixed) Hodge complex if and only if $L$ is so.
\end{lem}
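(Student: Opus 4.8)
The plan is to prove the absolute case $(\Ee_{0,0},\AHC)$ directly and to reduce the mixed case $(\Ee_{1,0},\MHC)$ to it by d\'{e}calage. The observation driving everything is that each of the axioms (AH$_0$)--(AH$_2$) is a property of invariants — the graded cohomologies, their $F$-filtrations, the four spectral sequences and the abutment $H^*(K_\kk)$ — on which an $E_{0,0}$-quasi-isomorphism induces \emph{isomorphisms}; since a (bi)filtered complex is an absolute Hodge complex precisely when these invariants satisfy the stated properties, and $f$ transports the invariants, the two assertions are equivalent (the ``if and only if'' being symmetric in $K$ and $L$). So first I would record what $f\colon K\to L$ produces: on the $\kk$- and $\CC$-sides it restricts on each weight-graded piece to an $E_0$-quasi-isomorphism, hence yields isomorphisms $H^n(Gr^W_pK_\kk)\cong H^n(Gr^W_pL_\kk)$ and \emph{strict} $F$-filtered isomorphisms $H^n(Gr^W_pK_\CC)\cong H^n(Gr^W_pL_\CC)$; moreover, since all filtrations are regular and exhaustive, it induces isomorphisms of each of the four spectral sequences from the $E_1$-stage on, compatibly with the differentials $d_r$, together with an isomorphism of abutments $H^*(K_\kk)\cong H^*(L_\kk)$.

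For (AH$_0$) I would use that $f$, being a morphism of diagrams, is compatible with the comparison morphisms, so for each $u\colon i\to j$ there is a commutative square relating $\varphi^K_u$ and $\varphi^L_u$ whose vertical arrows $u_*(f_i)$ and $f_j$ are $E_0^W$-quasi-isomorphisms (preserved by $u_*$ thanks to (D$_3$)). As $E_0^W$-quasi-isomorphisms form a class satisfying two-out-of-three, $\varphi^K_u$ is an $E_0^W$-quasi-isomorphism if and only if $\varphi^L_u$ is, which transfers the ``string of $E_0^W$-quasi-isomorphisms'' condition in both directions; the finite type condition passes across the abutment isomorphism $H^*(K_\kk)\cong H^*(L_\kk)$.

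Axioms (AH$_1$) and (AH$_2$) then follow from the recorded isomorphisms. For (AH$_1$), each $E_r(f)$ is an isomorphism commuting with $d_r$, so $d_r^K=E_r(f)^{-1}\,d_r^L\,E_r(f)$ and $d_r^K=0$ if and only if $d_r^L=0$; hence the four spectral sequences degenerate at $E_1$ for $K$ exactly when they do for $L$. For (AH$_2$), the strict $F$-filtered isomorphism $H^n(Gr^W_pK_\CC)\cong H^n(Gr^W_pL_\CC)$ and the $\kk$-isomorphism $H^n(Gr^W_pK_\kk)\cong H^n(Gr^W_pL_\kk)$, which are compatible through the comparison maps $\varphi$, transport the whole datum $(\,\text{lattice},\,\text{complexification},\,F\,)$ isomorphically; thus $(H^n(Gr^W_pK_\kk),F)$ is pure of weight $p$ if and only if $(H^n(Gr^W_pL_\kk),F)$ is.

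Finally, for the mixed case I would pass to d\'{e}calage: since $\Ee_{1,0}=\Dec_W^{-1}(\Ee_{0,0})$, the morphism $\Dec_W f$ is an $E_{0,0}$-quasi-isomorphism, so by the absolute case $\Dec_W K$ is an absolute Hodge complex if and only if $\Dec_W L$ is; it then remains to match ``$K\in\MHC$'' with ``$\Dec_W K\in\AHC$'' via Deligne's shift of spectral sequences (Scholie 8.1.9 of \cite{DeHIII}), under which the $E_2$-degenerations and the weight-$(p+n)$ normalisation of (MH$_1$)--(MH$_2$) become the $E_1$-degenerations and weight-$p$ normalisation of (AH$_1$)--(AH$_2$). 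The step I expect to demand the most care is exactly this reduction, and more precisely (MH$_2$): because $\Ee_{1,0}$ controls the \emph{d\'{e}cal\'{e}} (once-shifted) invariants rather than the $E_1^W$-objects $H^n(Gr^W_pK_\CC)$ appearing in (MH$_1$)--(MH$_2$), the mixed case cannot be argued verbatim at face value, and one must verify through the d\'{e}calage dictionary that purity of weight $p+n$ on $H^n(Gr^W_p)$ is genuinely a transported structural property, aligned by the degree shift with purity of weight $p$ in the absolute picture.
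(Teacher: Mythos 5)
Your treatment of the absolute case is essentially the paper's own proof: two-out-of-three applied to the squares relating $\varphi^K_u$ and $\varphi^L_u$ for (AH$_0$), transport of the spectral sequence isomorphisms for (AH$_1$), and transport of the isomorphisms $H^n(Gr_p^WGr^q_FK_\CC)\cong H^n(Gr_p^WGr^q_FL_\CC)$ and $H^n(Gr_p^WK_\kk)\cong H^n(Gr_p^WL_\kk)$ for (AH$_2$). That half is correct and matches the paper.

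The mixed case is where you diverge, and your reduction contains a genuine gap. Your chain is: $K\in\MHC$ and $f\in\Ee_{1,0}$ give $\Dec_WK\in\AHC$ and $\Dec_Wf\in\Ee_{0,0}$, hence $\Dec_WL\in\AHC$ by the absolute case; to conclude you need the implication ``$\Dec_WL\in\AHC\Rightarrow L\in\MHC$'', i.e.\ the \emph{converse} of Deligne's Scholie 8.1.9. That converse is false, and the paper itself records the mechanism: the shift $S_WK$ of an absolute Hodge complex $K$ is in general \emph{not} a mixed Hodge complex, even though $\Dec_W(S_WK)=K\in\AHC$ by Lemma \ref{jota0}. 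So membership of the d\'{e}calage in $\AHC$ does not detect membership in $\MHC$. The conceptual reason is exactly the worry you flag at the end but do not resolve: (MH$_1$) and (MH$_2$) are conditions on $E_1^W$-level objects (strictness of $d$ on $Gr^W_pK_\CC$, purity of $F$ on $H^n(Gr^W_pK_\CC)$), whereas the d\'{e}calage dictionary identifies $E_1(\Dec W)$ with $E_2(W)$, so $\Dec_WL\in\AHC$ only constrains the $E_2^W$-level of $L$ --- strictly weaker information, since $E_2$ is in general a proper subquotient of $E_1$. No amount of care in the dictionary can close this; the step is not a delicate verification but a false implication. The paper instead proves the mixed case ``analogously'' to the absolute one, i.e.\ by direct verification of (MH$_0$)--(MH$_2$) from the data an $E_{1,0}$-quasi-isomorphism transports; note in this connection that the paper's (MH$_0$) is stated with $E_1^W$-quasi-isomorphisms, a condition stable under two-out-of-three against the components of an $E_{1,0}$-quasi-isomorphism. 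To repair your argument you should abandon the d\'{e}calage reduction and argue the mixed axioms directly in the same pattern as your absolute case.
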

\begin{proof}
Assume that $f:K\to L$ is a an $E_{0,0}$-quasi-isomorphism and consider the diagram
$$
\xymatrix{
\ar[d]_{f_i}(K_i,W)\ar[r]^{\varphi_u^K}&(K_j,W)\ar[d]^{f_j}\\
(L_i,W)\ar[r]^{\varphi_u^L}&(L_j,W)&.
}
$$
Since the maps $f_i$ and $f_j$ are $E_0$-quasi-isomorphisms, by the two out of three property, it follows 
that $\varphi_u^K$ is an $E_0$-quasi-isomorphism if and only if $\varphi_u^L$ is so. Hence condition 
 ($\text{AH}_0$) is satisfied.
Condition ($\text{AH}_1$) is preserved by $E_{0,0}$-quasi-isomorphisms.
Condition  ($\text{AH}_2$) is a consequence of the following isomorphisms:
$$H^n(Gr_p^WGr^q_FK_\CC)\cong H^n(Gr_p^WGr^q_FL_\CC),\text{ and } 
H^n(Gr_p^WK_\kk)\cong H^n(Gr_p^WL_\kk).$$
 The proof for mixed Hodge complexes follows analogously.
\end{proof}

\subsection{Minimal Models}
The following technical lemma will be of use for the construction of minimal models.

\begin{lem}\label{seccions_compatibles}
Let $K$ be an absolute Hodge complex.
\begin{enumerate}[(1)]
 \item There are sections $\sigma^n_\kk:H^n(K_\kk)\to Z^n(K_\kk)$ and $\sigma^n_i:H^n(K_i)\to Z^n(K_i)$ of the projection, which are compatible with $W$.
\item There exists a section $\sigma^n_\CC:H^n(K_\CC)\to Z^n(K_\CC)$ of the projection, which is compatible with both filtrations $W$ and $F$.
\end{enumerate}
\end{lem}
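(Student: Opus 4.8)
The plan is to reduce the whole statement to the strictness of the differential with respect to the relevant filtrations, which is the chain-level reformulation of the $E_1$-degeneration asserted by axiom $(\text{AH}_1)$, and then to solve the resulting lifting problems by filtered linear algebra over the field in question. Recall that for a filtered complex $(K,W)$ over a field the spectral sequence degenerates at $E_1$ if and only if $d$ is strictly compatible with $W$; in that case the induced filtration on $H^n$ coincides with the image filtration $W_pH^n=\pi(Z^n\cap W_pK^n)$ and the projection $\pi:Z^n\to H^n$ is a strict epimorphism of filtered vector spaces. Throughout, $Z^n\subset K^n$ carries the restriction of the filtrations of $K^n$ and $H^n=Z^n/B^n$ the induced quotient filtrations.

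For part (1) I would proceed as follows. By $(\text{AH}_1)$ the weight spectral sequence degenerates at $E_1$ at the vertex $0$ for $(K_\kk,W)$ and at the intermediate vertices for each $(K_i,W)$, so in every case $\pi:Z^n\to H^n$ is a $W$-strict surjection of filtered vector spaces. Such a map always admits a filtered section, which I build by induction on $p$: assuming $\sigma^n$ defined on $W_{p-1}H^n$ with values in $W_{p-1}Z^n$, I use $W$-strictness to see that the induced map $Gr_p^WZ^n\to Gr_p^WH^n$ is surjective, choose a linear splitting of it and lift it along $W_pZ^n\twoheadrightarrow Gr_p^WZ^n$; this extends $\sigma^n$ over a complement of $W_{p-1}H^n$ in $W_pH^n$, landing in $W_pZ^n$. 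Assembling these steps yields the $W$-compatible sections $\sigma^n_\kk$ and $\sigma^n_i$.

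Part (2) is the substantive case, since at the bifiltered vertex $s$ a section compatible with both $W$ and $F$ is required. The two extra inputs from $(\text{AH}_1)$ are the $F$-strictness of $d$ on $K_\CC$ (degeneration of the Hodge spectral sequence) and the $F$-strictness of $d$ on each $Gr_p^WK_\CC$ (degeneration of its weight-graded Hodge spectral sequences); together with $W$-strictness these give, via Scholie 8.1.9 of \cite{DeHIII}, that $\pi:Z^n(K_\CC)\to H^n(K_\CC)$ is bistrict, that is $\pi\bigl(W_pZ^n\cap F^qZ^n\bigr)=W_pH^n\cap F^qH^n$ for all $p,q$. Granting this, I rerun the induction on the weight filtration while keeping track of $F$. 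The inclusions $W_{p-1}H^n\hookrightarrow W_pH^n$ are automatically $F$-strict, because both carry the filtration induced from $H^n$, so $W_pH^n$ splits $F$-compatibly as $W_{p-1}H^n\oplus C$ with $C\cong Gr_p^WH^n$. Using bistrictness I lift the inclusion $C\hookrightarrow W_pH^n$ along $\pi$ to an $F$-compatible map $C\to W_pZ^n$ (by the part-(1) argument applied to $F$), and combine it with the inductively given $F$-compatible section on $W_{p-1}H^n$; the resulting $\sigma^n_\CC$ is then compatible with both $W$ and $F$.

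The main obstacle is exactly the bistrictness invoked in part (2). A section compatible with one filtration always exists by the elementary splitting of filtered vector spaces, but two filtrations on a vector space need not admit a common splitting, and for the projection to respect $W$ and $F$ simultaneously one must know that $W$ and $F$ interact strictly on cocycles and on cohomology. This is where the full force of $(\text{AH}_1)$ enters, through Deligne's two-filtration formalism, rather than the separate degenerations of the weight and Hodge spectral sequences alone.
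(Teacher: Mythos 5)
Your proof is correct, and for part (1) it is the same argument as the paper's ($E_1$-degeneration equals $W$-strictness of $d$, then split the filtration gradedwise). For part (2) you take a genuinely different route, even though the crucial input is identical: both you and the paper rely on the image formula $W_pH^n\cap F^qH^n=\Img\{H^n(W_pF^qK_\CC)\to H^n(K_\CC)\}$, i.e.\ the bistrictness of $\pi:Z^n\to H^n$, deduced from the fourfold $E_1$-degeneration in $(\text{AH}_1)$ via Deligne's theory of two filtrations. The paper then invokes axiom $(\text{AH}_2)$: since $H^n(K)$ carries a mixed Hodge structure, Deligne's splitting (Lemma 1.2.11 of Hodge II) yields a bigrading $H^n(K_\CC)=\bigoplus I^{p,q}$ with $I^{p,q}\subset W_{p+q}\cap F^p$ that splits $W$ and $F$ simultaneously; one lifts each $I^{p,q}$ into $W_{p+q}F^p Z^n(K_\CC)$ by the image formula, and bicompatibility of the resulting section is automatic. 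You instead never use the mixed Hodge structure on cohomology: your weight induction with $F$-compatible complements, each lifted $F$-compatibly by bistrictness, amounts to proving the purely linear-algebraic fact that a bistrict surjection of bifiltered vector spaces (finite filtrations over a field) admits a section compatible with both filtrations. Your argument is longer but more elementary and slightly more general, since it uses only $(\text{AH}_1)$ and would apply even when the two filtrations on cohomology are not known to form a mixed Hodge structure; the paper's is shorter granted the MHS machinery, precisely because the $I^{p,q}$ replace your step-by-step construction of compatible complements. One citation quibble: the bistrictness you use is not literally Scholie 8.1.9 of Hodge III (which records the degenerations) but the two-filtration results of Deligne (Hodge II, 1.3.16; Hodge III, 7.2) --- the same implication the paper asserts, without a precise reference, in the sentence ``It follows that the induced filtrations in cohomology are given by\dots''.
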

\begin{proof}
The first assertion follows from the degeneration of the spectral sequence associated with $(K_\kk,W)$ at the first stage.
To prove the second assertion we use Deligne's splitting of mixed Hodge structures.
Since the cohomology $H^n(K_\kk)$ is a mixed Hodge structure,
by Lemma 1.2.11 of \cite{DeHII} there is a direct sum decomposition
$H^n(K_\CC)=\bigoplus I^{p,q}$
with $I^{p,q}\subset W_{p+q}F^pH^n(K_\CC)$ and such that
$$W_mH^n(K_\CC)=\bigoplus_{p+q\leq m+n} I^{p,q}\text{ and }F^lH^n(K_\CC)=\bigoplus_{p\geq l} I^{p,q}.$$
Therefore it suffices to define sections $\sigma^{p,q}:I^{p,q}\to Z^n(K_\CC)$.
By ($\text{AH}_1$)
the four spectral sequences associated with $K$ 
degenerate at $E_1$.
It follows that
the induced filtrations in cohomology are given by:
$$W_pF^qH^n(A_\CC)=\Img\{H^n(W_pF^qK_\CC)\to H^n(K_\CC)\}.$$
Since $I^{p,q}\subset  W_{p+q}F^qH^n(K_\CC)$ we have $\sigma^{p,q}(I^{p,q})\subset W_{p+q}F^pK_\CC$.
\end{proof}

\begin{teo}\label{minimals_AHC}
Let $K$ be an absolute Hodge complex. There is a ho$_0$-morphism 
$\rho:K\hto H(K)$ which is a quasi-isomorphism.
\end{teo}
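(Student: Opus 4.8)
The plan is to realise $H(K)$ as an object of $\mathbf{G}^+(\MHS)\subset\AHC$: take the diagram of cohomologies $(H^*(K_\kk),W)$ and $(H^*(K_\CC),W,F)$ with the induced filtrations, zero differentials, and comparison morphisms $\varphi_u^H:=H(\varphi_u)$. A ho$_0$-morphism $\rho=(\rho_i,R_u):K\hto H(K)$ then amounts to a family of filtered quasi-isomorphisms $\rho_i:K_i\to H(K_i)$ together with $(0,0)$-homotopies $R_u$ witnessing $\varphi_u^H\rho_i\simeq\rho_j\varphi_u$ for each arrow $u$ of the zig-zag. I would construct the $\rho_i$ first and then the homotopies $R_u$; the engine in both steps is the strictness of the differentials furnished by the degeneration axiom (AH$_1$), which yields \emph{filtered} (resp.\ bifiltered) splittings of the cocycle and coboundary subobjects.

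For the level-wise maps I would use the sections of Lemma \ref{seccions_compatibles}. At the $\kk$-vertex and the intermediate $\CC$-vertices, the $E_1$-degeneration of $(K_i,W)$ makes $d$ strictly compatible with $W$, so one can choose $W$-filtered splittings $K_i^n=B^n\oplus\mathcal{H}^n\oplus L^n$ where $\mathcal{H}^n=\Img(\sigma^n_i)\cong H^n(K_i)$, the summand $L^n$ is a filtered complement of the cocycles $Z^n(K_i)$, and $d$ restricts to a filtered isomorphism $L^n\xrightarrow{\sim}B^{n+1}$. The projection onto $\mathcal{H}^n$ is then a filtered chain map $\rho_i:K_i\to H(K_i)$ inducing the identity on cohomology, hence a filtered quasi-isomorphism (and, by strictness, an $E_0$-quasi-isomorphism). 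At the distinguished $\CC$-vertex one repeats the argument with the bifiltered section $\sigma_\CC$ of Lemma \ref{seccions_compatibles}(2), using that all four spectral sequences degenerate at $E_1$ to obtain splittings simultaneously compatible with $W$ and $F$, giving a bifiltered $\rho_\CC$.

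For the homotopies, fix $u:i\to j$ and set $g:=\varphi_u^H\rho_i-\rho_j\varphi_u:K_i\to H(K_j)$. This is a filtered chain map into a complex with zero differential, and since both composites induce $H(\varphi_u)$ on cohomology we have $H(g)=0$; consequently $g$ vanishes on $Z(K_i)$. As the target differential is zero, the homotopy equation reduces to $R_ud=g$, which I would solve by setting $R_u(d\ell):=g(\ell)$ on $B^{n+1}=d(L^n)$ (well defined since $d|_{L^n}$ is injective) and $R_u:=0$ on a filtered complement of $B^{n+1}$. Because $d|_{L^n}$ is a filtered isomorphism onto $B^{n+1}$ and $g$ is filtered, $R_u$ is filtered, i.e.\ a genuine $(0,0)$-homotopy. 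Assembling $(\rho_i)$ and $(R_u)$ gives the desired ho$_0$-morphism, which is a quasi-isomorphism by construction and hence, by Lemma \ref{quis_es_we}, a weak equivalence in $\AHC$.

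I expect the main obstacle to be the distinguished $\CC$-vertex, where everything must be arranged \emph{bifiltered}: the splitting $K_\CC^n=B^n\oplus\mathcal{H}^n\oplus L^n$ and the homotopy $R_u$ must respect $W$ and $F$ at once, and this bistrictness does not follow from $W$- and $F$-strictness separately. It rests on Deligne's canonical splitting of the mixed Hodge structure on $H^*(K_\CC)$ (already packaged into Lemma \ref{seccions_compatibles}(2)) together with the degeneration of the two spectral sequences of the graded pieces. A secondary point requiring care is the compatibility of the comparison morphisms $\varphi_u$ — extension of scalars $\kk\to\CC$ and the functor forgetting $F$ — with the chosen splittings, so that the families $(\rho_i)$ and $(R_u)$ indeed patch into a single ho$_0$-morphism of diagrams.
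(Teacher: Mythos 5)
Your construction is fine at every vertex that carries only the filtration $W$: over a field, $E_1$-degeneration gives strictness of $d$, filtered complements of subspaces always exist for a single filtration, so the splittings $K_i^n=B^n\oplus\mathcal{H}^n\oplus L^n$ and the projections $\rho_i$ exist; and since every homotopy $R_u$ takes values in a vertex category $\Cx{\mathbf{F}\CC}$ where only $W$ is remembered (the bifiltered vertex is never the target of an arrow of $I$), your solution of $R_ud=g$ only needs $W$-compatibility and works. The genuine gap is at the distinguished $\CC$-vertex, and it is exactly the point you defer: the existence of a complement $L^n$ of $Z^n(K_\CC)$ in $K_\CC^n$ compatible with \emph{both} $W$ and $F$ (equivalently, bistrictness of $d$ and distributivity of the lattice generated by $Z^n$, $W$, $F$). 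This does not follow from the two ingredients you cite. Deligne's splitting of the mixed Hodge structure on $H^n(K_\CC)$, as packaged in Lemma $\ref{seccions_compatibles}$(2), produces a \emph{section} $H^n(K_\CC)\to Z^n(K_\CC)$ compatible with $W$ and $F$, hence a bifiltered complement of $B^n$ \emph{inside} $Z^n$; it says nothing about the part of $K_\CC^n$ outside the cocycles. Degeneration gives strictness of $d$ with respect to $W$ and to $F$ \emph{separately}, but a subspace admitting a $W$-complement and an $F$-complement need not admit a common one: in $\CC^2$ take $Z=\langle e_1\rangle$ with $W$ jumping along $\langle e_2\rangle$ and $F$ jumping along $\langle e_1+e_2\rangle$. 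Concretely, extending your bifiltered projection $Z^n\to\mathcal{H}^n$ to all of $K_\CC^n$ requires $Z^n\cap(W_aK+F^bK)=(Z^n\cap W_aK)+(Z^n\cap F^bK)$ for all $a,b$, a statement about the complex (not about its cohomology) which needs a genuine Hodge-theoretic argument and which you neither prove nor correctly reduce to a cited result.

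This is precisely the difficulty the paper's proof is designed to avoid: it constructs the ho$_0$-morphism in the \emph{opposite} direction, $\sigma:H(K)\hto K$, using only the sections of Lemma $\ref{seccions_compatibles}$ (lifting Deligne's $I^{p,q}$ into $W_{p+q}F^pZ^n$, which degeneration does provide) and $W$-strictness for the homotopies $\Sigma_u$; it then reverses the arrow abstractly: every object of $\AHC$ is level-wise fibrant (over a field every filtered complex is $0$-injective), so by Proposition $\ref{fibrants_ho}$ the weak equivalence $\sigma$ induces a bijection $\sigma^*:[K,H(K)]^h\to[H(K),H(K)]^h$, and the preimage of the identity is the desired quasi-isomorphism $\rho:K\hto H(K)$. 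To repair your argument, either supply a real proof of the bifiltered splitting of $K_\CC$ (it is not "already packaged" anywhere in the paper), or run your splitting construction to build the section $\sigma$ instead of the projection and conclude by fibrancy as above.
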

\begin{proof}
By Lemma $\ref{seccions_compatibles}$ we can find sections
$\sigma_\kk:H^*(K_\kk)\to K_\kk$ and $\sigma_i:H^*(K_i)\to K_i$ compatible with the filtration $W$,
together with a section $\sigma_\CC:H^*(K_\CC)\to K_\CC$ compatible with $W$ and $F$. By definition, all maps are quasi-isomorphisms.
Let $\varphi_u:K_{i}\to K_{j}$ be a component of the quasi-equivalence $\varphi$ of $K$. The diagram
$$
\xymatrix{
H^*(K_i)\ar[d]_{\sigma_{i}}\ar[r]^{\varphi^*_u}&H^*(K_{j})\ar[d]_{\sigma_{j}}\\
K_{i}\ar[r]^{\varphi_u}&K_{j}
}
$$
is not necessarily commutative, but for any element $x\in H^*(K_{i})$, the difference
$(\sigma_{j}\varphi_u^*-\varphi_u\sigma_{j})(x)$
is a coborder. Since the differentials are strictly compatible with the weight filtration there exists a linear map
$\Sigma_u:H^*(K_{i})[1]\to K_{j}$
compatible with the weight filtration $W$ and such that
$\sigma_{j}\varphi_u^*-\varphi_u\sigma_{i}=d\Sigma_u.$
The morphisms
$\sigma_\kk$, $\sigma_i$ and $\sigma_\CC$ together with the homotopies $\Sigma_u$
define a ho$_0$-morphism $\sigma:H(K)\hto K$ which by construction is a quasi-isomorphism.
Since every object in $\AHC$ is fibrant, by Lemma $\ref{fibrants_ho}$ this lifts to a ho$_0$-morphism
 $\rho:K\hto H(K)$ which is a quasi-isomorphism.
\end{proof}

Denote by $\pi\left(\mathbf{G}^+(\MHS)^h\right)$ the category whose objects are non-negatively graded mixed Hodge structures and whose morphisms
are $(0,0)$-homotopy classes of ho$_0$-morphisms.
\begin{teo}\label{AHC_sull}
The triple $(\AHC,\Hh_{0,0},\Qq)$ is a Cartan-Eilenberg category and $\mathbf{G}^+(\MHS)$ is a full subcategory of fibrant minimal models. 
The inclusion induces an equivalence of categories
$$\pi\left(\mathbf{G}^+(\MHS)^h\right)\stackrel{\sim}{\lra}\Ho\left(\AHC\right).$$
\end{teo}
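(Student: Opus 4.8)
The plan is to deduce the statement from the abstract machinery of Theorem $\ref{diag_cx_sull}$ applied to the diagram category $\GCc=\DFCx$ with its $r=0$ structure, taking $\Dd=\AHC$ and $\Mm=\mathbf{G}^+(\MHS)$. Since $\DFCx$ satisfies axioms (D$_0$)--(D$_3$) of $\ref{indexcat}$, its level-wise weak equivalences and ho-equivalences are exactly $\Ee_{0,0}$ and $\Hh_{0,0}$. The first step is to reconcile the two notions of weak equivalence: by Lemma $\ref{quis_es_we}$ we have $\Qq\cap\AHC=\Ee_{0,0}\cap\AHC$, so that on $\AHC$ the class $\Qq$ coincides with $\Ee_{0,0}$ and $\Ho(\AHC)=\AHC[\Ee_{0,0}^{-1}]$. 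It then remains only to check the two hypotheses of Theorem $\ref{diag_cx_sull}$.

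To verify hypothesis (i), I would take a ho$_0$-morphism $f:K\hto L$ between absolute Hodge complexes and argue that $\Cc yl(f)$ is again an absolute Hodge complex. The clean way is to invoke Proposition $\ref{factor_homor_cx}$, which gives that the projection $p:\Cc yl(f)\to L$ is a weak equivalence, hence an $E_{0,0}$-quasi-isomorphism of $\DFCx$; since $L\in\AHC$, Lemma $\ref{tancades_quis}$ (closure of $\AHC$ under $E_{0,0}$-quasi-isomorphisms) forces $\Cc yl(f)\in\AHC$. Thus $\AHC$ is closed under mapping cylinders without any direct computation on the three summands.

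For hypothesis (ii), I would first note $\mathbf{G}^+(\MHS)\subset\AHC$, as every complex of mixed Hodge structures is an absolute Hodge complex. The subtle point is fibrancy: because the vertex categories of $\DFCx$ are categories of filtered and bifiltered complexes over fields, in which every object is injective, the injectivity condition defining a $0$-injective (resp. $(0,0)$-injective) complex is automatic while the differential condition is vacuous for $r=0$. Hence by Proposition $\ref{r_injectius}$ and its bifiltered analogue every object of $\DFCx$ is level-wise fibrant, so $\DFCx=(\DFCx)_f$ and in particular $\mathbf{G}^+(\MHS)\subset\AHC\cap(\DFCx)_f$. Finally, for each $K\in\AHC$ the cohomology $H(K)$, with the filtrations $W$ and $F$ induced on $H^n(K_\kk)$, is a non-negatively graded mixed Hodge structure by axiom (AH$_2$), and Theorem $\ref{minimals_AHC}$ supplies a ho$_0$-morphism $\rho:K\hto H(K)$ which is a quasi-isomorphism, hence (both source and target being absolute Hodge complexes, so that $\Qq$ and $\Ee_{0,0}$ agree) a weak equivalence into $\mathbf{G}^+(\MHS)$. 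This gives (ii).

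With both hypotheses in hand, Theorem $\ref{diag_cx_sull}$ yields that $(\AHC,\Hh_{0,0},\Qq)$ is a right Cartan-Eilenberg category with fibrant models in $\mathbf{G}^+(\MHS)$ and an equivalence $\pi\left(\mathbf{G}^+(\MHS)^h\right)\stackrel{\sim}{\lra}\AHC[\Ee_{0,0}^{-1}]=\Ho(\AHC)$. To finish I would record minimality: a quasi-isomorphism between graded mixed Hodge structures, which carry trivial differentials, equals its own induced map in cohomology, and is therefore an isomorphism because morphisms of mixed Hodge structures are strictly compatible with filtrations (\cite{DeHII}, Theorem 2.3.5); so $\mathbf{G}^+(\MHS)$ is a subcategory of minimal models. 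I expect the genuine difficulty to lie entirely in Theorem $\ref{minimals_AHC}$ (the construction of the cohomological model via Deligne's splitting and compatible sections); once that is granted, the only nonformal observation in the present argument is that fibrancy is automatic over a field, which is precisely what lets the cohomology itself serve as a fibrant minimal model even though it carries no differential.
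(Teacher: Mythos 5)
Your proposal is correct and follows essentially the same route as the paper: reduce to Theorem \ref{diag_cx_sull} with $\GCc=\DFCx$, $\Dd=\AHC$, $\Mm=\mathbf{G}^+(\MHS)$, reconcile $\Qq$ with $\Ee_{0,0}$ via Lemma \ref{quis_es_we}, verify hypothesis (i) by combining Proposition \ref{factor_homor_cx} with Lemma \ref{tancades_quis}, and verify hypothesis (ii) via Theorem \ref{minimals_AHC}. One small repair: Lemma \ref{tancades_quis} is stated for strict morphisms of $\DFCx$, so in (i) you should apply it to the inclusion $j:L\to\Cc yl(f)$ --- which is a genuine morphism of the diagram category and a weak equivalence by Proposition \ref{factor_homor_cx} --- rather than to the ho-morphism $p:\Cc yl(f)\hto L$, which is exactly what the paper does.
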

\begin{proof}
We show that the conditions of Theorem $\ref{diag_cx_sull}$ are satisfied, with
$\GCc=\Gamma\mathbf{F}$, $\Dd=\AHC$ and $\Mm=\mathbf{G}^+(\MHS)$.
By Lemma $\ref{quis_es_we}$, the class $\Qq$ of quasi-isomorphisms coincides with the class $\Ee_{0,0}$
of level-wise $E_0$- (resp. $E_{0,0}$-) quasi-isomorphisms.
Therefore conditions (D$_0$)-(D$_3$) of $\ref{indexcat}$ are trivially satisfied.
We prove condition (i) of Theorem $\ref{diag_cx_sull}$.
Let $f:K\hto L$ be a ho$_0$-morphism of $\AHC$. By Lemma $\ref{factor_homor_cx}$
the level-wise inclusion $j:L\to \Cc yl(f)$ is in $\Ee_{0,0}$.
It follows from Lemma $\ref{tancades_quis}$ that $\Cc yl(f)$ is an absolute Hodge complex.
Condition (ii) follows from
Theorem $\ref{minimals_AHC}$.
\end{proof}

Let $H$ be a graded object in the category of mixed Hodge structures.
The the shift $S_WH$ is a mixed Hodge complex with trivial differentials, since
$Gr^{SW}_pH_\kk^n=Gr^W_{p+n}H_\kk^n$. This gives a
functor $S_W:\mathbf{G}^+(\MHS)\to \MHC$.
Denote by 
$\pi (S_W(\mathbf{G}^+(\MHS))^h)$ the category whose objects are those of $S_W(\mathbf{G}^+(\MHS))$
and whose morphisms are $(1,0)$-homotopy classes of ho$_1$-morphisms.

\begin{teo}\label{MHC_sull}
The triple $(\MHC,\Hh,\Qq)$ is a Cartan-Eilenberg category, and $S_W(\mathbf{G}^+(\MHS))$ is a full subcategory of fibrant minimal models. 
The inclusion induces an equivalence of categories
$$\pi (S_W(\mathbf{G}^+(\MHS))^h)\stackrel{\sim}{\lra}\Ho\left(\MHC\right).$$
\end{teo}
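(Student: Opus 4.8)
The plan is to obtain the statement as an instance of Theorem \ref{diag_cx_sull}, following the blueprint of Theorem \ref{AHC_sull} but now in the $r=1$ world of $\DFCx$. I would take $\GCc=\DFCx$ with its level-wise $E_{1,0}$-quasi-isomorphisms and $(1,0)$-homotopies, $\Dd=\MHC$, $\Mm=S_W(\mathbf{G}^+(\MHS))$, and $\Hh=\Hh_{1,0}$. By Lemma \ref{quis_es_we} the class $\Qq$ restricted to $\MHC$ coincides with $\Ee_{1,0}\cap\MHC$, so the triple under study is $(\MHC,\Hh_{1,0},\Ee_{1,0})$, and conditions (D$_0$)-(D$_3$) of \ref{indexcat} hold for $\DFCx$ as already recorded. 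It then remains to check hypotheses (i) and (ii) of Theorem \ref{diag_cx_sull}.

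Condition (i) is the verbatim analogue of the argument for $\AHC$. Given a ho$_1$-morphism $f:K\hto L$ between mixed Hodge complexes, Proposition \ref{factor_homor_cx} shows that the level-wise inclusion $j:L\to\Cc yl(f)$ is a level-wise weak equivalence, hence an $E_{1,0}$-quasi-isomorphism; since $L$ is a mixed Hodge complex, Lemma \ref{tancades_quis} gives that $\Cc yl(f)$ is again a mixed Hodge complex.

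The heart of the proof is condition (ii): every mixed Hodge complex $K$ must receive a ho$_1$-morphism to an object of $S_W(\mathbf{G}^+(\MHS))$ that is an $E_{1,0}$-quasi-isomorphism. The direct construction of Theorem \ref{minimals_AHC} is not available, because the weight spectral sequence of a mixed Hodge complex degenerates only at $E_2$, so one cannot split cohomology by $W$-compatible sections at the chain level; this failure of $E_1$-degeneration is the main obstacle. I would circumvent it by transporting the absolute minimal model through the adjunction $S_W\dashv\Dec_W$. Applying $\Dec_W$ produces an absolute Hodge complex, and Theorem \ref{minimals_AHC} yields a ho$_0$-morphism $\rho:\Dec_W K\hto H$ which is an $E_{0,0}$-quasi-isomorphism, with $H=H(\Dec_W K)\in\mathbf{G}^+(\MHS)$. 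Applying $S_W$ turns $\rho$ into a ho$_1$-morphism $S_W\rho:S_W\Dec_W K\hto S_W H$ (the shift carries $(0,0)$-homotopies to $(1,0)$-homotopies, by the weight-filtration analogue of Lemma \ref{commuten_translation}(ii)), which remains a weak equivalence because $S_W$ sends $E_{0,0}$- to $E_{1,0}$-quasi-isomorphisms. Moreover the counit $\eps:S_W\Dec_W K\to K$ satisfies $\Dec_W(\eps)=\mathrm{id}$, so $\eps\in\Dec_W^{-1}(\Ee_{0,0})=\Ee_{1,0}$ is itself a weak equivalence.

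To splice these into a single ho$_1$-morphism out of $K$, the key observation is that $Q:=S_W H$ has trivial differential; hence $d(W_pF^qQ)\subset W_{p-1}F^qQ$ holds trivially, its graded pieces are complexes of injectives because $\kk$ and $\CC$ are fields, and so $Q$ is $(1,0)$-injective at every vertex and therefore fibrant in $\DFCx$. This is the point that rescues the detour: a general mixed Hodge complex is not fibrant in the $r=1$ structure, but the shifted minimal models $S_W(\mathbf{G}^+(\MHS))$ are. Since $\eps$ is a weak equivalence and $Q$ is fibrant, Proposition \ref{fibrants_ho} gives a bijection $\eps^*:[K,Q]^h\to[S_W\Dec_W K,Q]^h$; pulling back the class of $S_W\rho$ yields a ho$_1$-morphism $\widetilde\rho:K\hto Q$ with $\widetilde\rho\circ\eps\simeq S_W\rho$, and two-out-of-three makes $\widetilde\rho$ an $E_{1,0}$-quasi-isomorphism. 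This establishes (ii). With (i) and (ii) in hand, Theorem \ref{diag_cx_sull} shows that $(\MHC,\Hh_{1,0},\Qq)$ is a right Cartan-Eilenberg category with fibrant minimal models $S_W(\mathbf{G}^+(\MHS))$ and produces the claimed equivalence $\pi(S_W(\mathbf{G}^+(\MHS))^h)\stackrel{\sim}{\lra}\Ho(\MHC)$.
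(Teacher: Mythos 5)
Your proposal is correct and follows the paper's overall strategy: the paper also deduces the theorem from Theorem \ref{diag_cx_sull} applied to $\Dd=\MHC$ inside $\DFCx$ with $\Mm=S_W(\mathbf{G}^+(\MHS))$, handles (D$_0$)--(D$_3$) and condition (i) exactly as in Theorem \ref{AHC_sull}, and verifies condition (ii) by transporting the absolute Hodge minimal model through the adjunction $S_W\dashv\Dec_W$. The one divergence is in the mechanics of condition (ii). The paper starts from the quasi-isomorphism $\sigma:H(\Dec_WK)\hto\Dec_WK$ (the section built in the \emph{proof} of Theorem \ref{minimals_AHC}, before the fibrant lifting) and applies the hom-set adjunction together with Lemma \ref{commuten_translation}(ii) to obtain in one step a ho$_1$-morphism $S_WH(\Dec_WK)\hto K$; note that this points from the model into $K$, so the reversal needed to match the direction $D\hto Q$ literally required by condition (ii) is left implicit there. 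You instead start from the lifted morphism $\rho:\Dec_WK\hto H(\Dec_WK)$ of the \emph{statement} of Theorem \ref{minimals_AHC}, apply $S_W$, and then lift along the counit $\eps:S_W\Dec_WK\to K$ (a weak equivalence since $\Dec_W(\eps)=\mathrm{id}$ and $\Ee_{1,0}=\Dec_W^{-1}(\Ee_{0,0})$) using Proposition \ref{fibrants_ho}. Your route is slightly longer, but it produces the weak equivalence $K\hto S_WH$ in the direction condition (ii) actually demands, and it makes explicit the fibrancy of the shifted minimal models (trivial differentials give $(1,0)$-injectivity over a field), a fact the paper needs anyway for $\Mm\subset\Dd\cap\GCc_{f}$ but does not spell out. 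Both arguments rest on the same key ingredients, so this is a refinement of detail rather than a different proof.
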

\begin{proof}
It suffices to verify the hypothesis of Theorem 
$\ref{diag_cx_sull}$ for the subcategory $\MHC$ of $\DFCx$.
Conditions (D$_0$)-(D$_3$) of $\ref{indexcat}$  and condition (i) of Theorem $\ref{diag_cx_sull}$ follow analogously to 
Theorem $\ref{AHC_sull}$. We verify condition (ii).
Let $K$ be a mixed Hodge complex. Then $\Dec_WK$ is an absolute Hodge complex and by Theorem $\ref{minimals_AHC}$ there exists
a ho$_0$-morphism 
$\sigma:H(\Dec_W K)\hto \Dec_WK$.
The adjuncion $S_W\dashv\Dec_W$ defined at the level of diagrams of filtered complexes
together with Lemma $\ref{commuten_translation}.(ii)$
gives a ho$_1$-morphism 
$S_WH(\Dec_WK)\hto K$ which is a quasi-isomorphism.
\end{proof}

\begin{teo}\label{AHCequivMHC}
Deligne's d\'{e}calage induces an equivalence of categories
$$\Dec_W:\Ho(\MHC)\stackrel{\sim}{\lra}\Ho(\AHC).$$
\end{teo}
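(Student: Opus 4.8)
The plan is to reduce the statement to the two Cartan--Eilenberg equivalences of Theorems \ref{AHC_sull} and \ref{MHC_sull} and to use the shift $S_W$ as a quasi-inverse to $\Dec_W$ at the level of minimal models. First I would check that $\Dec_W$ does descend to a functor $\Ho(\MHC)\to\Ho(\AHC)$: on objects it sends $\MHC$ into $\AHC$ by Scholie 8.1.9 of \cite{DeHIII}, it is the identity on morphisms, and because $\Ee_{1,0}=\Dec_W^{-1}(\Ee_{0,0})$ it maps $\Ee_{1,0}$ into $\Ee_{0,0}$; by Lemma \ref{quis_es_we} these classes coincide with $\Qq$ on $\MHC$ and $\AHC$ respectively, so $\Dec_W$ preserves quasi-isomorphisms and passes to the localizations.

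The core of the argument is the square
$$
\xymatrix{
\pi\left(S_W(\mathbf{G}^+(\MHS))^h\right)\ar[r]^-{\sim}\ar[d]_{\Dec_W} & \Ho(\MHC)\ar[d]^{\Dec_W}\\
\pi\left(\mathbf{G}^+(\MHS)^h\right)\ar[r]^-{\sim} & \Ho(\AHC),
}
$$
whose horizontal arrows are the inclusion-induced equivalences of Theorems \ref{MHC_sull} and \ref{AHC_sull}. I would then show that the left-hand vertical arrow is an isomorphism of categories with inverse induced by $S_W$. On objects this is clear: since the objects of $\mathbf{G}^+(\MHS)$ have trivial differential one has $\Dec_W S_W=1$ and $S_W\Dec_W=1$ on them (Lemma \ref{jota0} and Lemma \ref{crinversos}), so $\Dec_W$ and $S_W$ exchange $\mathbf{G}^+(\MHS)$ and $S_W(\mathbf{G}^+(\MHS))$ bijectively.

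To handle morphisms I would upgrade Lemma \ref{commuten_translation} to the diagram setting: I need that $\Dec_W$ turns a ho$_1$-morphism $(f_i,F_u)$ into a ho$_0$-morphism and a $(1,0)$-homotopy $(h_i,H_u)$ into a $(0,0)$-homotopy, while $S_W$ does the reverse, and that these operations are mutually inverse on homotopy classes. Since $\Dec_W$ and $S_W$ act level-wise through $\Dec$ and $S$ and are compatible with the comparison morphisms via the adjunction $S_W\dashv\Dec_W$ (exactly as already exploited in the proof of Theorem \ref{MHC_sull}), this follows from Lemma \ref{commuten_translation} applied at each vertex together with the identity $\Dec_W S_W=1$. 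Granting this, the left vertical arrow is an equivalence, and the square commutes because both composites apply $\Dec_W$ to a minimal model and then include it: on objects $\Dec_W(S_WH)=H$, and on morphisms the agreement is functoriality of $\Dec_W$. Consequently the right vertical arrow $\Dec_W\colon\Ho(\MHC)\to\Ho(\AHC)$ is an equivalence.

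The step I expect to be the main obstacle is precisely this diagram-level refinement of Lemma \ref{commuten_translation}: tracking the homotopy data $F_u$ and $H_u$ carried by ho-morphisms and their homotopies through $\Dec_W$ and $S_W$, and verifying that the two induced maps between homotopy classes of ho-morphisms are inverse to one another. Everything else is either a direct appeal to Theorems \ref{AHC_sull} and \ref{MHC_sull} or a routine bookkeeping verification.
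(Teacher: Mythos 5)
Your proposal is correct and takes essentially the same route as the paper: both reduce the statement to the minimal-model equivalences of Theorems \ref{AHC_sull} and \ref{MHC_sull} and the observation that $\Dec_W$ and $S_W$ are mutually inverse on complexes with trivial differentials. The diagram-level bookkeeping you flag as the main obstacle is in fact harmless precisely because the minimal models have trivial differentials, so d\'{e}calage and shift are mere reindexings of the filtrations and act bijectively on all the homotopy data $(f_i,F_u)$ and $(h_i,H_u)$ via Lemma \ref{commuten_translation}; this is why the paper disposes of it in one line.
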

\begin{proof}
If suffices to note that when restricted to complexes with trivial differentials,
the functors $\Dec_W$ and $S_W$ are inverse to each other.
The result follows from Theorems $\ref{AHC_sull}$ and $\ref{MHC_sull}$.
\end{proof}

\subsection{Applications}
We provide
an alternative proof of Beilinson's Theorem on absolute Hodge complexes 
and study further properties
of morphisms of absolute Hodge complexes in the homotopy category.
\begin{teo}[\cite{Be}, Theorem. 3.4]\label{beilinson}
The inclusion induces an equivalence of categories
$$\mathbf{D}^+\left(\MHS\right)\stackrel{\sim}{\lra}\Ho\left(\AHC\right).$$
\end{teo}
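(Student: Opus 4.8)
The plan is to show that the inclusion functor induces an equivalence by treating essential surjectivity and full faithfulness separately, using the Cartan--Eilenberg description of $\Ho(\AHC)$ furnished by Theorem~\ref{AHC_sull} together with the fact that the abelian category $\MHS$ has cohomological dimension one. First I would construct the comparison functor. As observed just before the statement, every complex of mixed Hodge structures is an absolute Hodge complex, so there is an inclusion $\Cx{\MHS}\hookrightarrow\AHC$. A quasi-isomorphism of complexes of mixed Hodge structures is, in particular, a level-wise quasi-isomorphism compatible with filtrations, hence lies in $\Qq$ and becomes invertible in $\Ho(\AHC)$. By the universal property of the localization $\mathbf{D}^+(\MHS)$ of $\Cx{\MHS}$ at quasi-isomorphisms, this yields a functor
$$\Theta:\mathbf{D}^+(\MHS)\lra\Ho(\AHC),$$
and the goal is to prove that $\Theta$ is an equivalence.

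For essential surjectivity I would invoke the minimal models of Theorem~\ref{minimals_AHC}: every absolute Hodge complex $K$ admits a ho$_0$-morphism $K\hto H(K)$ which is a quasi-isomorphism, hence an isomorphism in $\Ho(\AHC)$. Since $H(K)$ is a non-negatively graded mixed Hodge structure, it lies in $\mathbf{G}^+(\MHS)\subset\Cx{\MHS}$, so every object of $\Ho(\AHC)$ is isomorphic to one in the image of $\Theta$.

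The core of the argument is full faithfulness, which by the minimal model property reduces to comparing morphism sets between two graded mixed Hodge structures $A$ and $B$ (objects of $\mathbf{G}^+(\MHS)$). On the target, Theorem~\ref{AHC_sull} identifies $\Ho(\AHC)(A,B)$ with the set $[A,B]^h$ of $(0,0)$-homotopy classes of ho$_0$-morphisms. On the source, I would use that $\MHS$ has cohomological dimension one, that is $\mathrm{Ext}^i_{\MHS}=0$ for $i\geq 2$ (\cite{Be}, \cite{Ca}); this forces every object of $\mathbf{D}^+(\MHS)$ to be isomorphic to the sum of the shifts of its cohomology and gives
$$\Hom_{\mathbf{D}^+(\MHS)}(A,B)=\prod_n\Hom_{\MHS}(A^n,B^n)\times\prod_n\mathrm{Ext}^1_{\MHS}(A^n,B^{n-1}).$$
It then remains to produce the matching decomposition of $[A,B]^h$: a ho$_0$-morphism $A\hto B$ between objects with vanishing differential consists of filtration-preserving degree-zero maps $f_i$, which assemble into an honest morphism of graded mixed Hodge structures and account for the $\Hom$ factor, together with the comparison homotopies $F_u$, whose classes modulo the homotopies $H_u$ of Definition~\ref{hohomotopies_cx} account for the $\mathrm{Ext}^1$ factor (here a component $F_u$ runs from cohomological degree $n$ to degree $n-1$, consistently with the indexing above).

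I expect this last identification to be the main obstacle: one must show that the homotopy classes of the comparison data $(F_u)$ reproduce exactly $\prod_n\mathrm{Ext}^1_{\MHS}(A^n,B^{n-1})$, and that the resulting bijection is compatible with $\Theta$ and with the Yoneda presentation of extensions used on the derived side. Here the mixed-Hodge-theoretic inputs enter decisively: the strict compatibility of morphisms of mixed Hodge structures with the filtrations (\cite{DeHII}, Theorem~2.3.5) governs which systems $(F_u)$ are admissible and which are null-homotopic, while the vanishing $\mathrm{Ext}^{i}_{\MHS}=0$ for $i\geq 2$ guarantees that there are no higher obstructions, so that both morphism sets are concentrated in the $\Hom$ and $\mathrm{Ext}^1$ terms. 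Once this matching is established, $\Theta$ is fully faithful and, combined with essential surjectivity, an equivalence of categories.
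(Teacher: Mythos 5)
Your construction of $\Theta$ and the essential surjectivity step are fine, and they use the same input as the paper (Theorem~\ref{minimals_AHC}). The problem is full faithfulness, where the proposal has a genuine gap together with a built-in risk of circularity. The two facts you import --- the vanishing $\Ext^{i}_{\MHS}=0$ for $i\geq 2$, and (implicitly, in order to identify the classes of the comparison data $(F_u)$) Carlson's formula for $\Ext^1_{\MHS}$ --- are exactly the statements that this paper \emph{deduces from} Theorem~\ref{beilinson}, in Theorem~\ref{extensions} and Corollary~\ref{morfismesAHC}. Citing \cite{Be} for the cohomological-dimension-one statement, when \cite{Be}, Theorem~3.4 is the very theorem being proved, obliges you to check that the Ext-vanishing is established there independently of that theorem; inside the present paper no independent proof of it exists, so as written the logical order is reversed. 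Moreover, even granting both inputs, the step you yourself flag as ``the main obstacle'' is the actual crux and is not carried out: you must show that the bijection between $\prod_n\Ext^1_{\MHS}(A^n,B^{n-1})$ and the homotopy classes of the homotopies $(F_u)$ is \emph{the one induced by} $\Theta$. This is delicate for two reasons: the splitting $A\cong\bigoplus_n H^n(A)[-n]$ in $\mathbf{D}^+(\MHS)$ is non-canonical, and an $\Ext^1$-class lives on the derived side as a roof through a Yoneda extension, while on the $\Ho(\AHC)$ side it is a homotopy class of ho-morphisms whose only nonzero component is $G$; producing this dictionary is essentially a re-proof of Carlson's theorem. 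Until that is done, the argument is incomplete at its central point.

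For contrast, the paper's proof computes no Hom or Ext groups at all. It applies the general Theorem~\ref{diag_cx_sull} a second time, now with $\Dd=\Cx{\MHS}$ and $\Mm=\mathbf{G}^+(\MHS)$: condition (ii) is again Theorem~\ref{minimals_AHC}, and the only new verification is condition (i), namely that the mapping cylinder of a ho-morphism $g=(g_\kk,g_\CC,G)$ of complexes of mixed Hodge structures is again a complex of mixed Hodge structures. This is achieved by transporting the Hodge filtration through the invertible map $\psi:\Cc yl(g_\kk)\otimes\CC\to \Cc yl(g_\CC)$ built from $G$, and using that $\MHS$ is abelian. One then has two equivalences induced by inclusions,
$$\mathbf{D}^+(\MHS)\stackrel{\sim}{\longleftarrow}\pi\left(\mathbf{G}^+(\MHS)^h\right)\stackrel{\sim}{\lra}\Ho(\AHC),$$
with the same source, and the theorem follows; the Ext-vanishing and Carlson's formula then come out as corollaries rather than going in as hypotheses. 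If you want to keep your route, the honest path is to first prove this mapping-cylinder statement (or Carlson's formula directly, by hand) and only then run the comparison --- at which point you will have essentially reconstructed the paper's argument.
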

\begin{proof}
It suffices to verify the hypothesis of Theorem 
$\ref{diag_cx_sull}$ for the subcategory $\mathbf{C}^+\left(\MHS\right)$ of $\DFCx$.
Conditions (D$_0$)-(D$_3$) of $\ref{indexcat}$  and condition (ii) of Theorem $\ref{diag_cx_sull}$ follow analogously to 
Theorem $\ref{AHC_sull}$. We verify condition (i), that is,
the mapping cylinder $\Cc yl(g)$
of a ho-morphism $g=(g_\kk,g_\CC,G):K\hto L$ of complexes of mixed Hodge structures is a complex of mixed Hodge structures.
The morphism $\psi:\Cc yl(g_\kk)\otimes\CC\to \Cc yl(g_\CC)$ defined by
$$
\psi=\left(\begin{matrix}
1&0&0\\
\text{-}G&1&0\\
0&0&1
\end{matrix}\right)
$$
is invertible. Define a filtration $F$ on $\Cc yl(f_\kk)\otimes\CC$ by letting
$F^p(\Cc yl(f_\kk)\otimes\CC):=\psi^{-1}(F^p\Cc yl(f_\CC)).$
Since the category of mixed Hodge structures is abelian, this endows $\Cc yl(f_\kk)$ with mixed Hodge structures.
\end{proof}

Every mixed Hodge structure can be identified with a complex of mixed Hodge structures concentrated in degree 0.
With this identification, and since the category $\MHS$ is abelian, given mixed Hodge structures $H$ and $H'$ over a field $\kk$, one can compute their 
extensions as
$$\Ext^{n}(H,H')={\mathbf{D}^+(\MHS)}(H,H'[n]).$$

Given filtered (resp. bifiltered) vector spaces $X$ and $Y$ over $\kk$,
denote by $\Hom^W(X,Y)$ (resp. $\Hom^W_F(X,Y)$ the set of morphisms
from $X$ to $Y$ that are compatible with the filtration $W$ (resp. the filtrations $W$ and $F$).

We next recover a result of Carlson \cite{Ca} regarding extensions of mixed Hodge structures, by studying
the morphisms in the homotopy category of absolute Hodge complexes (see also Section I.3 of \cite{PS} and Proposition 8.1 of \cite{Mo}).
\begin{teo}\label{extensions}
Let $H$ and $H'$ be mixed Hodge structures. Then
$$\Ext^1(H,H')={{\Hom^W(H_\CC,H'_\CC)}\over{\Hom^W(H_\kk,H'_\kk)+\Hom^W_F(H_\CC,H'_\CC)}},$$
and $\Ext^{n}(H,H')=0$ for all $n>1$.
\end{teo}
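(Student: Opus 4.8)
The plan is to transport the computation through the equivalences already established and then make the resulting groups of homotopy classes completely explicit. Using the identification $\Ext^n(H,H')=\mathbf{D}^+(\MHS)(H,H'[n])$ recalled above, together with Beilinson's Theorem~$\ref{beilinson}$ and Theorem~$\ref{AHC_sull}$, I would first reduce to
$$\Ext^n(H,H')\cong \Ho(\AHC)(H,H'[n])\cong [H,H'[n]]^h,$$
the last group being homotopy classes of ho$_0$-morphisms between the cohomologies of $H$ and of $H'[n]$ (by Theorem~$\ref{AHC_sull}$). Since both $H$ and $H'[n]$ are absolute Hodge complexes with zero differential, they coincide with their own cohomology, and the problem becomes the explicit computation of $[H,H'[n]]^h$, where $H$ sits in degree $0$, $H'[n]$ in degree $-n$, and all comparison morphisms $\varphi_u$ of both diagrams are the canonical identifications, hence the identity.

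I would then unwind Definition~$\ref{premorfisme}$. A ho$_0$-morphism $f=(f_i,F_u)$ has strict components $f_i\colon H_i\to (H'[n])_i$; as $H_i$ is concentrated in degree $0$ and $(H'[n])_i$ in degree $-n$, every $f_i$ vanishes for $n\geq 1$. The homotopies $F_u\colon H_i[1]\to (H'[n])_j$ relate the degree $-1$ to the degree $-n$ part, so a second degree count forces $F_u=0$ unless $n=1$. Consequently, for $n\geq 2$ the only ho$_0$-morphism is zero, giving $\Ext^n(H,H')=0$ and settling the vanishing statement.

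For $n=1$ each $F_u$ becomes a $\CC$-linear map $H_\CC\to H'_\CC$ — at the initial arrow $0\to 1$ the source is $H_\kk\otimes\CC$, since $u_*$ extends scalars — and it is compatible with $W$ because $F_u$ is a $0$-homotopy; along the arrow into the bifiltered vertex the functor $v_*$ forgets $F$, so there too only $W$-compatibility is required. The equation $Df=0$ holds automatically since all differentials and all $f_i$ vanish, so a ho$_0$-morphism is just a tuple $(F_u)$ in $\Hom^W(H_\CC,H'_\CC)$. From Definition~$\ref{hohomotopies_cx}$ I would read off that a $(0,0)$-homotopy $h=(h_i,H_u)$ between two such ho$_0$-morphisms has $H_u=0$ by degree, while $h_i$ is $W$-compatible at the $\kk$- and $\CC$-vertices and $W$- and $F$-compatible at the bifiltered vertex: $h_0\in\Hom^W(H_\kk,H'_\kk)$, $h_i\in\Hom^W(H_\CC,H'_\CC)$ at the intermediate vertices, and $h_s\in\Hom^W_F(H_\CC,H'_\CC)$. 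The homotopy relation then reads $G_u-F_u=\varphi_u h_i-h_j\varphi_u$ for each arrow $u\colon i\to j$, with $\varphi_u$ the identity and $u_*$ extending scalars at $h_0$.

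Finally I would carry out the linear algebra. These relations say that $(F_u)$ and $(G_u)$ are homotopic exactly when they differ by the image of the incidence map $(h_i)\mapsto(\varphi_u h_i-h_j\varphi_u)_u$ of the zig-zag. Since the zig-zag is a tree whose interior vertices carry the full space $\Hom^W(H_\CC,H'_\CC)$, the telescoping sum $\sum_u\pm F_u$ induces an isomorphism from $[H,H'[1]]^h$ onto the cokernel of this map; the only surviving relations are contributed by vertex $0$, namely $\Hom^W(H_\kk,H'_\kk)$ included via extension of scalars, and by the bifiltered vertex $s$, namely $\Hom^W_F(H_\CC,H'_\CC)$. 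This yields the natural isomorphism
$$[H,H'[1]]^h\stackrel{\sim}{\lra}\frac{\Hom^W(H_\CC,H'_\CC)}{\Hom^W(H_\kk,H'_\kk)+\Hom^W_F(H_\CC,H'_\CC)},$$
which is the asserted value of $\Ext^1(H,H')$. I expect the principal difficulty to be precisely this bookkeeping: tracking which filtration compatibility constrains each $F_u$ and each $h_i$ at the distinct vertices, and verifying that the telescoping identifies the quotient by exactly these two subspaces, neither larger nor smaller, as in Carlson's formula.
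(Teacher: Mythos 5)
Your proposal is correct and follows essentially the same route as the paper: reduce via Theorems $\ref{beilinson}$ and $\ref{AHC_sull}$ to computing homotopy classes of ho-morphisms $[H,H'[n]]^h$, kill all components by degree count for $n\geq 2$, and for $n=1$ identify the cocycles (only the homotopy components $G$ survive, $W$-compatible) modulo the coboundaries $h_\kk\otimes\CC-h_\CC$ with $h_\kk\in\Hom^W(H_\kk,H'_\kk)$ and $h_\CC\in\Hom^W_F(H_\CC,H'_\CC)$. The only difference is that the paper compresses the whole zig-zag into a single triple $(g_\kk,g_\CC,G)$, whereas you justify that compression explicitly through the telescoping/incidence-map argument over the interior vertices — a bookkeeping step the paper leaves implicit.
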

\begin{proof}
By Theorems $\ref{AHC_sull}$ and $\ref{beilinson}$ we have
$\Ext^{n}(H,H')=[H,H'[n]]^h$ for all $n\geq 0$.

A pre-morphism $g$ from $H$ to $H'[n]$ of degree $0$
is given by a triple $g=(g_\kk,g_\CC,G)$
where 
$$g_\kk\in\Hom^W(H_\kk,H_\kk'[n]),\,g_\CC\in\Hom^W_F(H_\CC,H_\CC'[n])\text{ and }
G\in\Hom^W(H_\CC,H_\CC'[n-1]).$$
Its differential is given by $Dg=(0,0,(-1)^m(g_\CC-g_\kk\otimes\CC)).$

If $n=1$ it follows that $g_\kk=0$, and $g_\CC=0$. In particular it satisfies $Dg=0$.
Such a cocycle $g=(0,0,G)$ is a coborder if and only if $G=h_\kk\otimes\CC-h_\CC$, where
$h_\kk\in\Hom^W(H_\kk,H_\kk')$ and $h_\CC\in\Hom^W_F(H_\CC,H_\CC').$
This proves the formula for $\Ext^1(H,H')$.

Lastly, if $n>1$ then $g_\kk=0$, $g_\CC=0$ and $G=0$. Hence $\Ext^n(H,H')=0$.
\end{proof}

Morphisms in the homotopy category of $\AHC$ are characterized as follows.

\begin{cor}\label{morfismesAHC}Let $K$ and $L$ be absolute Hodge complexes. Then
$$\Ho(\AHC)(K,L)=\bigoplus_n\left(\Hom_{\MHS}(H^nK,H^nL)\oplus\Ext^1_{\MHS}(H^nK,H^{n-1}L)\right).$$
\end{cor}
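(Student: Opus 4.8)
The plan is to reduce the computation of $\Ho(\AHC)(K,L)$ to a direct sum of extension groups of mixed Hodge structures, which can then be read off from Theorem $\ref{extensions}$. The starting observation is that every absolute Hodge complex is \emph{formal}: by Theorem $\ref{minimals_AHC}$ there is a quasi-isomorphism $K\hto H(K)$, and since $H(K)$ is a graded mixed Hodge structure carried by a complex with trivial differential, it lies in the essential image of the inclusion $\mathbf{C}^+(\MHS)\hookrightarrow\AHC$. Hence, in $\Ho(\AHC)$, the complexes $K$ and $L$ become isomorphic to $H(K)=\bigoplus_n H^n(K)[-n]$ and $H(L)=\bigoplus_m H^m(L)[-m]$ respectively, regarded as complexes of mixed Hodge structures with zero differential.

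First I would invoke Beilinson's Theorem $\ref{beilinson}$, which identifies $\Ho(\AHC)$ with $\mathbf{D}^+(\MHS)$ via the inclusion. Combining this with the previous step gives
$$
\Ho(\AHC)(K,L)\cong \mathbf{D}^+(\MHS)\Big(\bigoplus_n H^n(K)[-n],\ \bigoplus_m H^m(L)[-m]\Big).
$$
Because the cohomology of an absolute Hodge complex is bounded below and of finite type (axiom $(\text{AH}_0)$), in each total degree only finitely many summands contribute, so the Hom splits as a double sum
$$
\bigoplus_{n,m}\mathbf{D}^+(\MHS)\big(H^n(K)[-n],\,H^m(L)[-m]\big)
=\bigoplus_{n,m}\Ext^{\,n-m}\big(H^n(K),H^m(L)\big),
$$
using the identification $\Ext^{j}(H,H')=\mathbf{D}^+(\MHS)(H,H'[j])$ recorded just before Theorem $\ref{extensions}$.

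Finally I would feed in Theorem $\ref{extensions}$: the group $\Ext^{j}(H,H')$ vanishes for $j<0$ and for $j>1$, equals $\Hom_{\MHS}(H,H')$ for $j=0$, and is the stated quotient for $j=1$. Consequently, in the double sum only the terms with $n-m\in\{0,1\}$ survive, contributing $\Hom_{\MHS}(H^n(K),H^n(L))$ when $m=n$ and $\Ext^1_{\MHS}(H^n(K),H^{n-1}(L))$ when $m=n-1$. Re-indexing and collecting these two families of terms yields exactly the asserted formula.

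I expect the only genuinely delicate point to be the first step, namely the formality statement: that the image of $K$ in $\mathbf{D}^+(\MHS)$ is its cohomology equipped with the zero differential. This is precisely what the quasi-isomorphism $K\hto H(K)$ of Theorem $\ref{minimals_AHC}$ provides, transported across the equivalence of Theorem $\ref{beilinson}$; the remaining steps are the bookkeeping of cohomological degrees that isolates the surviving extension groups. As an alternative that bypasses Beilinson's theorem, one could compute $[H(K),H(L)]^h$ directly through the equivalence $\Ho(\AHC)\simeq\pi\left(\mathbf{G}^+(\MHS)^h\right)$ of Theorem $\ref{AHC_sull}$, analysing ho$_0$-morphisms and their homotopies between graded mixed Hodge structures; this, however, would amount to re-deriving the content already packaged in Theorem $\ref{extensions}$.
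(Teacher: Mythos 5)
Your proposal is correct, but it follows a genuinely different route from the paper's own proof. The paper stays entirely inside the ho-morphism formalism: by Theorem \ref{AHC_sull} it identifies $\Ho(\AHC)(K,L)$ with $[H(K),H(L)]^h$, then unwinds explicitly what a ho-morphism $g=(g_\kk,g_\CC,G):H(K)\hto H(L)$ and a homotopy between such are; the degree-preserving pair $(g_\kk,g_\CC)$ gives the $\Hom_{\MHS}(H^nK,H^nL)$ terms, while the homotopy component $G$ modulo coboundaries is matched, via the quotient formula of Theorem \ref{extensions}, with $\Ext^1_{\MHS}(H^nK,H^{n-1}L)$. In other words, the ``alternative that bypasses Beilinson's theorem'' which you mention at the end \emph{is} the paper's proof. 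Your main argument instead transports the computation across Beilinson's equivalence (Theorem \ref{beilinson}) into $\mathbf{D}^+(\MHS)$, uses formality ($K\cong H(K)$ in $\Ho(\AHC)$, from Theorem \ref{minimals_AHC}) to replace both complexes by their split cohomology, and then performs the standard hereditary-category computation: Hom between complexes with zero differential decomposes into shifted $\Ext$ groups, and the vanishing $\Ext^{\,j}=0$ for $j\neq 0,1$ (Theorem \ref{extensions} plus the $t$-structure for $j<0$) isolates the two surviving diagonals. What your route buys is generality and separation of concerns: it is the classical description of morphisms in the derived category of a hereditary abelian category, with all Hodge theory concentrated in Theorem \ref{extensions}; what the paper's route buys is explicitness (one sees the extension class arise concretely from the compatibility homotopy $G$) and independence from the derived-category splitting formalism. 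One point you treat a little loosely: Hom out of the infinite coproduct $\bigoplus_n H^nK[-n]$ is a priori a \emph{product} over $n$, and Hom into $\bigoplus_m H^mL[-m]$ needs the degree-bound argument (maps from an object concentrated in degree $n$ into the part in degrees $\geq n+2$ vanish by the $t$-structure) before it splits as a sum; both reduce to the stated direct sum only because $(\text{AH}_0)$ bounds the cohomology, which is exactly the finiteness you invoke—and the same convention is implicit in the paper's own direct-sum formula, so this is a shared caveat rather than a gap.
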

\begin{proof}
By Theorem $\ref{AHC_sull}$ there is a bijection
$\Ho(\AHC)(K,L)\cong [H(K),H(L)]^h.$
A ho-morphism $g:H(K)\hto H(L)$ is given by 
a morphism $g^*_\kk:H^*(K_\kk)\to H^*(L_\kk)$ compatible with $W$,
 a morphism $g_\CC^*:H^*(K_\CC)\to H^*(L_\CC)$ compatible with $W$ and $F$,
such that $g_\kk\otimes\CC\cong g_\CC$, together with
a morphism $G^*:H^*(K_\CC)[1]\to H^*(L_\CC)$ compatible with $W$.

Such a ho-morphism is a coboundary if $g=Dh$, for some pre-morphism $h$ of degree -1. 
This implies that $g_\kk=0$ and $g_\CC=0$, and that there exist
a morphism $h^*_\kk:H^*(K_\kk)[1]\to H^*(L_\kk)$ compatible with $W$,
and a morphism $h_\CC^*:H^*(K_\CC)[1]\to H^*(L_\CC)$ compatible with $W$ and $F$,
such that
$G\cong h_\kk\otimes\CC-h_\CC.$ The result now follows from Theorem $\ref{extensions}$.
\end{proof}

\subsection*{Acknowledgments}
We want to thank V. Navarro for his valuable comments and suggestions.

\linespread{1}
\bibliographystyle{amsalpha}
\bibliography{bibliografia}
\mbox{}\\
\linespread{1.2}

\end{document}